\newtheorem{teo}{Theorem}[subsection]
\newtheorem{theorem}[teo]{Theorem}
\newtheorem*{thm*}{Theorem}
\newtheorem{corollary}[teo]{Corollary}
\newtheorem{lema}[teo]{Lemma}
\newtheorem{lemma}[teo]{Lemma}
\newtheorem{prop}[teo]{Proposition}
\newtheorem*{claim*}{Claim}
\theoremstyle{definition}
\newtheorem{definition}[teo]{Definition}
\newtheorem{example}[teo]{Example}
\theoremstyle{remark}
\newtheorem{remark}{Remark}
\numberwithin{figure}{section}
\newcommand{\co}{\colon}
\newcommand{\R}{\mathbb{R}}
\newcommand{\crit}{\mathrm{crit}}
\newcommand{\sd}{\mathrm{sd}}
\begin{document}
\title[Morse Theory on posets]{Fundamental Theorems of Morse Theory on posets}
\thanks{The first and the fifth authors were partially supported by MINECO Spain Research Project MTM2015--65397--P and Junta de Andaluc\'ia Research Groups FQM--326 and FQM--189. The second author was partially supported by MINECO-FEDER research project MTM2016--78647--P. The third author was partially supported by Ministerio de Ciencia, Innovaci\'on y Universidades,  grant FPU17/03443. The second and third authors were partially supported by Xunta de Galicia ED431C 2019/10 with FEDER funds.}
\author[Fern\'{a}ndez-Ternero, Mac\'{i}as-Virg\'os, Mosquera-Lois, Vilches]{D. Fern\'{a}ndez-Ternero$^{(1)}$, E. Mac\'{i}as-Virg\'os$^{(2)}$, \\
D. Mosquera-Lois$^{(2)}$, N.A. Scoville$^{(3)}$, J.A. Vilches$^{(1)}$}
\address{$^{(1)}$ Departamento de Geometr\'ia y Topolog\'ia, Universidad de Sevilla, Spain.}
\address{$^{(2)}$ Instituto de Matem\'aticas, Universidade de Santiago de Compostela, Spain.}
\address{$^{(3)}$ Department of Mathematics and Computer Science, Ursinus College, Collegeville PA 19426 U.S.A.}
\email{desamfer@us.es, quique.macias@usc.es, david.mosquera.lois@usc.es, nscoville@ursinus.edu, vilches@us.es}

\begin{abstract} We prove a version of the fundamental theorems of Morse Theory in the setting of finite spaces or partially ordered sets. By using these results we extend Forman's discrete Morse theory to more general cell complexes and derive the Morse-Pitcher inequalities in the context of finite spaces. 
\end{abstract}

\subjclass[2010]{
Primary: 06A07  
Secondary: 37B35,  55U99
}

\maketitle

\tableofcontents

\section{Introduction}\label{INTRO}

Morse Theory was originally introduced as a tool for the study of variational problems on manifolds by relating the homology of a space with some critical objects arising after defining a map and considering its induced dynamics \cite{Morse}. Since its introduction, Morse Theory has been an active field of research and connections with many different areas of Mathematics have been found. That interaction has led to several adaptations of Morse Theory to different contexts: PL versions by Banchoff \cite{Banchoff,Banchoff2} and by Bestvina and Brady \cite{Bestvina} and a purely combinatorial approach by Forman \cite{Forman,Forman2}. Nowadays, not only pure mathematics benefit from that interaction, but also applied mathematics \cite{Ghristbook} due to the importance of discrete settings.

Roughly speaking, Morse Theory addresses the study of the topology (homotopy or homology, originally) of a space by breaking it into ``elementary'' pieces. That is achieved by the so called Fundamental or Structural Theorems of Morse Theory, which assert that the object of study (for example a smooth manifold or a simplicial complex) has the homotopy type of a CW-complex with a given cell structure determined by the criticality of a Morse function defined on it \cite{Forman2,Milnor}.

Recent works have shown that it is possible to approach important problems regarding posets by using topological methods. See for example Barmak and Minian's work on realizing groups as the automorphism groups of certain posets \cite{BARMAK_posets} or Stong's work on groups dealing with the way the homotopy type of the poset of  non-trivial $p$-subgroups  ordered by inclusion determines algebraic properties of the group  \cite{Stong_groups}. Therefore, it makes sense to study the homotopy type of finite spaces by means of some adapted version of Morse theory.

The aim of this work is to develop an extension of Morse Theory to finite spaces introduced by Minian \cite{Minian} in order to prove the Fundamental Theorems of Morse Theory in this setting. Being more precise,
we study the evolution of level subposets with or without critical values. Moreover, some of their consequences are exploited, for instance providing an alternative proof of Forman's decomposition theorem \cite[Corollary 3.5]{Forman2}, an  extension of Discrete Morse Theory to more general cell complexes and the Morse-Pitcher inequalities.

The organization of the paper is as follows.  In Section \ref{sec:preliminaries} we recall some definitions and standard results about posets or finite topological spaces. Section \ref{sec:morse_theory_posets} is devoted to the study of discrete Morse Theory in the context of posets. In Section \ref{sec:fundamental_theorems_and_consequences} we prove the Fundamental Theorems of Morse Theory in this setting. Finally, in Section \ref{sec:consequences} we show some of their consequences, such us extending original Forman's result regarding the homotopy type of a regular CW-complex with a Morse function defined on it or obtaining the Morse inequalities. Moreover, we study a method to reduce the criticality of a Morse function defined on a poset.


\section{Finite Spaces, posets, and simplicial complexes} \label{sec:preliminaries}

This section is devoted to introduce the objects we will work with. In particular we are interested in two kinds of posets, two-wide posets
and down-wide posets (the first of which is due to Bloch \cite{Bloch}),
for which we will establish the main results of this work. Most of the material presented is well established in the literature, for further details the reader is referred to \cite{Barmak_book,BarMin12,Bloch,Farmer,Minian,Wachs}.
All posets will be assumed to be finite and by finite space we will mean $T_0$-space.
\subsection{Preliminaries}

It is well known  that finite posets and finite $T_0$-spaces are in bijective correspondence.  If $(X, \leq)$ is a poset, a topology on $X$ is defined by taking as a basis the open sets
 $$U_x:=\{y\in X: y\leq x\}$$ for each $x\in X$.   On the other hand, if $X$ is a finite $T_0$-space, for any $x\in X$, define the {\it minimal open set} $U_x$ as the intersection of all open sets containing $x$.  Then $X$ may be given a poset structure by defining $y\leq x$ if and only if $U_y\subseteq U_x$. It is easy to see that  these correspondences are mutual inverses of each other. Moreover a map between posets $f\co X\to Y$ is order preserving if and only if it is continuous when considered as a map between the associated finite spaces.   As a consequence of the correspondence between posets and finite $T_0$-spaces, we will use both notions interchangeably.

We need to introduce some basic notions and results.

\begin{definition}
	A {\it chain} in the poset $X$ is a subset $C\subseteq X$ such that if $x,y\in C$, then either $x\leq y$ or $y\leq x$. The {\it height} of $X$ is the maximum length of the chains in $X$, where a chain $x_0<x_1<\ldots <x_n$ has length $n$. The height $h(x)$ of an element $x\in X$ is the height of $U_x$ with the induced order.
\end{definition}

\begin{definition}
	 A poset $X$ is said to be {\it homogeneous} of degree $n$ if all maximal chains in $X$ have length $n$. A poset is {\it graded} if $U_x$ is homogeneous for every $x\in X$. In that case, the degree of $x$, denoted by $\deg(x)$, is its height.
\end{definition}

We will denote both the height and degree of an element by superscripts, that is, $x^{(p)}$.

Let $X$ be a finite poset, $x,y\in X$. If $x< y$ and there is no $z\in X$ such that $x< z< y$, we write $x\prec y$.


For $x\in X$ we also define $\widehat{U}_x:=\{w\in X\colon w<x\}$ as well as $F_x:=\{y\in X : y\geq x\}$ and $\widehat{F}_x:=\{y\in X : y>x\}$.

\subsection{Beat points and $\gamma-$points} Due to the correspondence between posets and finite topological spaces we can study the homotopy type of a poset.

\begin{definition}
	A point $x\in X$ is a {\it beat point} if either $\widehat{U}_x$ has a maximum ({\it down beat point}) or $\widehat{F}_x$ has a minimum ({\it up beat point}). Notice that in both, $\widehat{U}_x$ and $\widehat{F}_x$  are contractible.
\end{definition}

The next proposition states that removing beat points from a poset does not change its homotopy type.

\begin{prop}\cite[Proposition 1.3.4]{Barmak_book} \label{prop:removing_beat_point_homot_equiv} Let $x\in X$ be a beat point.  Then $X-\{x\}$ is a strong deformation retract of $X$.
\end{prop}


There is a weaker notion of beat point which we recall now:

\begin{definition}{\cite{Barmak_Minian},\cite[Definition 6.2.1]{Barmak_book}}
	The point $x\in X$ is a {\it $\gamma-$point} if $\widehat{C}_x=(U_x\cup F_x)-\{x\}$ is homotopically trivial.
\end{definition}

\subsection{McCord Theory of weak equivalences}

We now recall McCord functors between posets and simplicial complexes. 
 Given a poset $X$, we define its {\it order complex} $\mathcal{K}(X)$ as the simplicial complex whose $k$-simplices are the non-empty chains of $X$ of length $k$. Furthermore, given an order preserving map $f\co X\to Y$ between posets, we define a simplicial map $\mathcal{K}(f)\co \mathcal{K}(X) \to \mathcal{K}(Y)$ by $\mathcal{K}(f)(x)=f(x)$.

  Conversely, if $K$ is a simplicial complex, we define the face poset of $K$, denoted $\Delta(K)$, as the poset of simplices of $K$ ordered by inclusion. Given a simplicial map $\phi\co K \to L $ we define the order preserving map $\Delta(\phi)\co \Delta(K) \to \Delta(L)$ by $\Delta(\phi)(\sigma)=\phi(\sigma)$ for each simplex $\sigma$ of $K$.

  The face poset functor is defined analogously for CW-complexes. That is, given a CW-complex $K$, $\Delta(K)$ is the poset of cells of $K$ ordered by inclusion. Given a cellular map $\phi\co K \to L $ we define the order preserving map $\Delta(\phi)\co \Delta(K) \to \Delta(L)$ by $\Delta(\phi)(\sigma)=\phi(\sigma)$ for each cell $\sigma$ of $K$. Note that for a simplicial complex $K$, $\mathcal{K}\Delta(K)$ is $\sd(K)$, the first barycentric subdivision of $K$. For details and a proof of the result below see \cite{Barmak_book}:
  
  \begin{theorem}\label{thm:mccords_thms}
  	The following statements hold:
  	\begin{enumerate}
  		\item Let $X$ be a finite $T_0$-space. Then there is a map $\mu_X\co |\mathcal{K}(X)|\to X$ which is a weak homotopy equivalence.
  		\item Let $K$ be a simplicial complex. Then there is a map $\mu_K\co |K|\to \Delta(K)$ which is a weak homotopy equivalence.
  	\end{enumerate}
  \end{theorem}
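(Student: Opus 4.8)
The argument rests on a single non-formal ingredient, which I will call \emph{McCord's open cover theorem}: if $f\co X\to Y$ is a continuous map and $\mathcal{U}$ is an open cover of $Y$ that is a basis for its topology (that is, for all $U,V\in\mathcal{U}$ and every $y\in U\cap V$ there is $W\in\mathcal{U}$ with $y\in W\subseteq U\cap V$) such that the restriction $f\co f^{-1}(U)\to U$ is a weak homotopy equivalence for every $U\in\mathcal{U}$, then $f$ itself is a weak homotopy equivalence. I would quote this from \cite{Barmak_book}; its proof represents the classes of $\pi_n(Y)$, and the relative classes witnessing that $f_*$ is bijective, by maps of sufficiently finely subdivided simplices so that each small simplex lands inside a member of $\mathcal{U}$, and then patches the partial lifts together. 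That patching is the genuine obstacle; everything that follows is a matter of unwinding definitions.

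Granting this, consider (1). Define $\mu_X\co|\mathcal{K}(X)|\to X$ by $\mu_X(\alpha)=\min\sop(\alpha)$, where $\sop(\alpha)$ is the chain of vertices at which the barycentric coordinates of $\alpha$ are positive; the minimum exists because $\sop(\alpha)$ is a chain. I would first check that $\mu_X$ is continuous, and more precisely that $\mu_X^{-1}(U_x)=\bigcup_{z\le x}\mathrm{st}(z)$, the open star of the full subcomplex $\mathcal{K}(U_x)$ spanned by the vertices $\le x$ (here $\mathrm{st}(z)=\{\alpha: z\in\sop(\alpha)\}$ is the open star of the vertex $z$), hence an open subset of $|\mathcal{K}(X)|$. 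Now apply the open cover theorem to the basis $\mathcal{U}=\{U_x\}_{x\in X}$ of $X$: the finite space $U_x$ has maximum $x$, hence is contractible; and $\mu_X^{-1}(U_x)$ deformation retracts onto $|\mathcal{K}(U_x)|$, where $\mathcal{K}(U_x)$ is a cone with apex $x$ (since $x$ is comparable to every element of $U_x$), hence contractible as well. A map between contractible spaces is a weak homotopy equivalence, so each restriction $\mu_X\co\mu_X^{-1}(U_x)\to U_x$ is one, and therefore so is $\mu_X$.

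For (2) I would not construct a map from scratch but reuse (1). As recorded just before the statement, $\mathcal{K}\Delta(K)=\sd(K)$, so there is the standard homeomorphism $h\co|K|\xrightarrow{\cong}|\sd(K)|=|\mathcal{K}(\Delta(K))|$. Since $\Delta(K)$ is a finite $T_0$-space, part (1) provides a weak homotopy equivalence $\mu_{\Delta(K)}\co|\mathcal{K}(\Delta(K))|\to\Delta(K)$, and I set $\mu_K:=\mu_{\Delta(K)}\circ h$; being the composite of a homeomorphism and a weak homotopy equivalence, $\mu_K$ is a weak homotopy equivalence. (The same scheme yields the CW version once one knows $\mathcal{K}\Delta$ of a regular CW-complex retriangulates it.) To summarize, the only real work is the open cover theorem; the continuity of $\mu_X$ and the contractibility of the pieces $\mu_X^{-1}(U_x)$ are routine, the one point to watch being that one must take the \emph{minimum} of the support, since the chosen basis $\{U_x\}$ consists of down-sets — taking the maximum would make $\mu_X$ discontinuous.
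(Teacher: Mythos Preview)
Your proposal is correct and follows exactly the standard McCord argument. Note, however, that the paper does not actually prove this theorem: it refers the reader to \cite{Barmak_book} for details and a proof, and then records that the key ingredient is McCord's open cover theorem (stated immediately afterward as Theorem~\ref{thm:McCords_thm}). Your write-up is precisely the proof one finds there---define $\mu_X$ as the minimum of the support, apply McCord's theorem to the minimal basis $\{U_x\}$, and observe that both $U_x$ and its preimage are contractible---so there is nothing to compare beyond saying that your sketch fills in what the paper deliberately outsources.
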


In the proof of Theorem \ref{thm:mccords_thms} played a central role McCord's Theorem, which we now present. Let $X$ be a topological space. An open cover $\mathcal{U}$ of $X$ is called a {\it basis like open cover} for $X$ if $\mathcal{U}$ is a basis for a topology in the underlying set of $X$. Note that given a finite space $X$, the minimal basis $\{U_x\}_{x\in X}$ is a basis like open cover for $X$.

We reproduce the statement of McCord's Theorem:

\begin{theorem}\cite[Theorem 6]{McCord}\label{thm:McCords_thm}
	Let $f\colon X\to Y$ be a continuous map between topological spaces. Suppose there is a basis like open cover $\mathcal{U}$ of $Y$ such that for every $U\in \mathcal{U}$, the restriction:
	$$f_{\vert f^{-1}(U)}\colon f^{-1}(U)\to U$$
	is a weak homotopy equivalence. Then $f\colon X\to Y$ is a weak homotopy equivalence.
\end{theorem}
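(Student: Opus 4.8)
This is McCord's classical criterion for recognising weak homotopy equivalences, and the paper simply quotes it; nevertheless, here is the line of argument I would follow. The starting point is the standard lifting-theoretic reformulation of the conclusion: a continuous map $f\colon X\to Y$ is a weak homotopy equivalence if and only if, for every $n\geq 0$, every pair of maps $a\colon S^{n-1}\to X$ and $g\colon D^{n}\to Y$ with $g|_{S^{n-1}}=f\circ a$ can be completed by a map $b\colon D^{n}\to X$ with $b|_{S^{n-1}}=a$ together with a homotopy, stationary on $S^{n-1}$, from $f\circ b$ to $g$. (For $n=0$ this is surjectivity on $\pi_{0}$, for $n=1$ it packages injectivity on $\pi_{0}$ and surjectivity on $\pi_{1}$, and so on; ranging over all $n$ and all basepoints, it is equivalent to $f$ inducing bijections on all homotopy groups.) The plan is to solve such a lifting problem by cutting it into pieces indexed by members of $\mathcal{U}$ and gluing the local solutions.

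First I would localise. Given $a$ and $g$ as above, the image $g(D^{n})$ is compact, hence contained in a finite union $Y_{0}:=U_{1}\cup\cdots\cup U_{m}$ of members of $\mathcal{U}$; setting $X_{0}:=f^{-1}(Y_{0})$, the maps $a$ and $g$ already take values in $X_{0}$ and $Y_{0}$, so it is enough to prove that $f|_{X_{0}}\colon X_{0}\to Y_{0}$ is a weak homotopy equivalence. Here the basis-like hypothesis is essential: for any $U,V\in\mathcal{U}$ the intersection $U\cap V$ is open in the topology of which $\mathcal{U}$ is a basis, hence is a union of members of $\mathcal{U}$, and each such member $W\subseteq U\cap V$ again has $f^{-1}(W)\to W$ a weak homotopy equivalence. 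Thus the class of open subsets of $Y$ over which $f$ restricts to a weak equivalence is stable under the intersections one meets in a Mayer--Vietoris induction.

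That induction is the heart of the matter. One proceeds on $m$: write $Y_{0}=A\cup B$ with $A=U_{1}\cup\cdots\cup U_{m-1}$ and $B=U_{m}$, observe that $A$ and $A\cap B=(U_{1}\cap U_{m})\cup\cdots\cup(U_{m-1}\cap U_{m})$ are, after a further compactness reduction and using the previous paragraph, again covered by finitely many members of $\mathcal{U}$, so by the inductive hypothesis $f$ restricts to a weak equivalence over $A$, over $B$, and over $A\cap B$; and then one invokes the gluing lemma for open triads: if $Y_{0}=A\cup B$ with $A,B$ open and $f$ restricts to a weak homotopy equivalence over each of $A$, $B$, and $A\cap B$, then $f\colon X_{0}\to Y_{0}$ is a weak homotopy equivalence. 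The latter expresses that the square of the excisive triad $(Y_{0};A,B)$ is a homotopy pushout, that the analogous square over $X_{0}$ is too, and that a comparison map between homotopy pushouts which is a weak equivalence on the three corners is a weak equivalence on the pushout.

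I expect the principal obstacle to lie in making this induction genuinely terminate: each time one passes from a set to an intersection of two sets one only knows it is covered by members of $\mathcal{U}$, a priori infinitely many, so the reduction to a finite subcover must be redone relative to whatever sphere or disk is being tested, and the bookkeeping has to be arranged so that the nested recursion (on the dimension $n$ of the test cell and on the number $m$ of pieces) closes up. This is precisely the content that McCord isolates in \cite[Theorem 6]{McCord}; once it is in hand, collecting the resulting lifting statements over all $n$ and all basepoints yields that $f$ is a weak homotopy equivalence.
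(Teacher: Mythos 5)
The paper does not prove this statement at all: it is quoted verbatim from McCord \cite{McCord}, so your sketch has to stand on its own, and as it stands it has a genuine gap exactly where you suspect one. The reduction via the compression criterion and compactness to a finite union $Y_0=U_1\cup\cdots\cup U_m$ is fine, but the induction on $m$ does not close. With $A=U_1\cup\cdots\cup U_{m-1}$ and $B=U_m$, the intersection $A\cap B=\bigcup_{i<m}(U_i\cap U_m)$ is only a union of (possibly infinitely many) members of $\mathcal{U}$, not a union of at most $m-1$ of them, so the inductive hypothesis does not apply to it; and the gluing lemma for open triads that you invoke requires $f$ to restrict to a weak equivalence over \emph{all} of $A\cap B$, so you cannot substitute a compactness reduction ``relative to the test cell'' at that point --- shrinking to an open neighbourhood of a compact subset of $A\cap B$ changes the triad and reproduces the same problem one level down. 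The obvious Zorn-type variant (take a maximal open set over which $f$ is a weak equivalence and enlarge it by one basis element) fails for the same reason: the enlargement step again needs the weak equivalence over an intersection which is merely a union of basis elements. So the ``bookkeeping'' you defer in your last paragraph is not routine; it is precisely the hard content of the theorem.

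The arguments that do work are organized differently. McCord's own proof, modelled on the Dold--Thom criterion for quasifibrations (and written out carefully in May's note ``Weak equivalences and quasifibrations''), makes the \emph{outer} induction run over the dimension $n$ of the test cell rather than over the number of cover elements: one subdivides the cube $I^n$ by a Lebesgue-number argument so that each small cube maps into a single member of $\mathcal{U}$, and glues local solutions cube by cube; the interface data is compact and of dimension $<n$, so by further subdivision and the lower-dimensional cases it can always be pushed into single members of $\mathcal{U}$ lying inside the relevant intersections. Alternatively, one can prove that $Y$ is the homotopy colimit of the poset $\mathcal{U}$ and $X$ of the preimages $f^{-1}(U)$, and conclude from the objectwise equivalences --- but that descent statement is itself a theorem of comparable depth, not a formality. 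Either way, the passage from ``weak equivalence over each basis element'' to ``weak equivalence over unions'' needs a mechanism in which intersections are only ever probed by compact, lower-dimensional data, and your proposal does not yet contain it.
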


\subsection{Two-wide posets}

In this subsection we introduce a class of posets important for the later development of Morse Theory.

\begin{definition}[\cite{Bloch}]
	A poset $X$ is {\it two-wide}  if for any $x,z,y$ such that $x\prec z \prec y$, there is some $z'\in X$ such that $z'\neq z$ and $x\prec z' \prec y$.	
\end{definition}

\begin{remark}
	A poset $X$ is two-wide if and only if it satisfies the following condition: for any pair of elements $x,y\in X$ such that $x<y$ and $x\nprec y$, $\#\{z\colon x\leq z\leq y\}\geq 4$.
\end{remark}

\begin{lema}{\cite[Lemma 4.1, p. 168]{Lundell}}
	Given a regular CW-complex $K$, its face poset $\Delta(K)$ is two-wide.
\end{lema}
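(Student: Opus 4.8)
The plan is to reduce the two-wide condition to a purely local statement about the boundary of a single cell, and then to invoke the standard local structure of regular CW-complexes. For a cell $e$ of $K$ write $\overline{e}$ for its closed cell and $\dot{e}:=\overline{e}\setminus e$ for its boundary; since $K$ is regular, $\overline{e}\cong D^{\dim e}$, $\dot{e}\cong S^{\dim e-1}$, these inherit regular CW-structures, and the cells of $\dot{e}$ are exactly the proper faces of $e$.

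First I would determine the covering relation of $\Delta(K)$. If $\sigma<\tau$ with $\dim\tau\geq\dim\sigma+2$, then $\sigma$ is a cell of the regular CW-sphere $\dot{\tau}\cong S^{\dim\tau-1}$ of dimension at most $\dim\tau-2$, and it cannot be a maximal cell of $\dot{\tau}$: a maximal cell of a CW-complex is open in it, whereas an open subset of the manifold $\dot{\tau}$ cannot be homeomorphic to the open cell $\sigma\cong\R^{\dim\sigma}$ with $\dim\sigma\leq\dim\tau-2$. Hence $\sigma$ is a proper face of some cell of $\dot{\tau}$, i.e. there is $\rho$ with $\sigma<\rho<\tau$. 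Therefore $\sigma\prec\tau$ in $\Delta(K)$ if and only if $\sigma<\tau$ and $\dim\tau=\dim\sigma+1$, and iterating shows $\Delta(K)$ is graded by dimension.

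Now suppose $x\prec z\prec y$ in $\Delta(K)$; by the previous paragraph $\dim x=n-2$, $\dim z=n-1$, $\dim y=n$ for some $n\geq 2$. Every cell between $x$ and $y$ is a proper face of $y$, hence lies in $\dot{y}\cong S^{n-1}$, and conversely every $(n-1)$-cell of $\dot{y}$ having $x$ as a face is automatically covered by $y$. So it suffices to show that, in the regular CW-structure on the sphere $\dot{y}$, the $(n-2)$-cell $x$ is a face of at least two $(n-1)$-cells: choosing $z'\neq z$ among them yields $x\prec z'\prec y$. To see this, pick a point $p$ in the open cell $x$. A neighbourhood of $p$ in the $(n-1)$-manifold $\dot{y}$ is homeomorphic to $\R^{n-1}$; on the other hand, by the local conelike structure of regular CW-complexes, a neighbourhood of $p$ in $\dot{y}$ is cellularly homeomorphic to $\R^{n-2}\times\mathrm{Cone}(\Lambda)$, where $\Lambda$ is a finite discrete set with one point for each $(n-1)$-cell of $\dot{y}$ having $x$ as a face. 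Comparing the two descriptions and applying invariance of domain, $\R^{n-2}\times\mathrm{Cone}(\Lambda)\cong\R^{n-1}$ is possible only if $\Lambda$ has exactly two points, which gives the two desired $(n-1)$-cells. (For $n=2$ this is the elementary observation that in a regular CW-decomposition of the circle each vertex is an endpoint of exactly two edges.)

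The main obstacle is the local conelike (product) structure used in the last step: that a neighbourhood of an interior point of a $k$-cell of a regular CW-complex is homeomorphic, compatibly with the cell structure, to $\R^{k}$ times the cone on a lower-dimensional regular complex (the ``link''). This is the genuine content here — it is what underlies the fact that the barycentric subdivision $\mathcal{K}\Delta(K)$ triangulates $K$ — and it really uses regularity, since both this structure and the two-wide conclusion fail for arbitrary CW-complexes. Rather than redevelop this machinery I would cite Lundell–Weingram. An alternative, equally valid route is homological: $\dot{y}\cong S^{n-1}$ is a connected closed pseudomanifold, so its top cellular homology is generated by a cycle $\sum_{\rho}\pm\rho$ in which every $(n-1)$-cell occurs with coefficient $\pm 1$; since in a regular CW-complex the incidence number $[\rho:x]$ equals $\pm 1$ precisely when $x\prec\rho$, vanishing of the boundary of this cycle forces the number of $(n-1)$-cells having $x$ as a face to be even and nonzero, hence at least two.
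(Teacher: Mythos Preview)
Your argument is correct. Note, however, that the paper does not supply its own proof of this lemma: it simply quotes the result from Lundell--Weingram, so there is no ``paper's proof'' to compare your proposal against. What you have written is essentially a self-contained version of the argument underlying that reference: first identifying the covering relation in $\Delta(K)$ with codimension-one face inclusion, and then using the regular CW-sphere $\dot{y}\cong S^{n-1}$ to see that every $(n-2)$-cell of $\dot{y}$ is incident to exactly two $(n-1)$-cells. Both of the routes you outline for this last step---the local cone/link description and the top-cycle/incidence-number computation---are standard and valid; the latter has the advantage of avoiding invariance-of-domain and the link machinery entirely.
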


\subsection{Down-wide posets}
In this subsection we introduce the new concept of down-wide poset, which will play an important role in the development of Morse Theory in the context of finite posets. 

\begin{definition}
	Given a poset $X$ and $x\in X$, we define the {\em down-incidence number} of $x$ as the cardinality of the set $\partial(x)=\{y \in X\colon y\prec x\}$. The poset $X$ is {\em down-wide} if $\#\partial(x)\geq2$ for every for every non-minimal $x$ in $X$. 
\end{definition}

Obrserve that down-wide posets do not have down-beat points. It is easy to check the following result:

\begin{lema}\label{lem:face_poset_is_cellular}
	For any regular CW-complex $K$, its face poset $\Delta(K)$ is down-wide.
\end{lema}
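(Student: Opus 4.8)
The plan is to translate the combinatorial condition into a statement about the topological boundary of a cell and then invoke the defining property of regular CW-complexes. Recall that for a regular CW-complex $K$ the face poset $\Delta(K)$ is graded by dimension, and the covering relation $\sigma\prec\tau$ holds precisely when $\sigma$ is a face of $\tau$ with $\dim\sigma=\dim\tau-1$ (this is classical; see \cite{Lundell}). The minimal elements of $\Delta(K)$ are exactly the vertices of $K$, so a non-minimal element is a $p$-cell $\tau$ with $p\geq 1$, and for such a cell $\partial(\tau)$ is identified with the set of $(p-1)$-cells contained in the geometric boundary $\partial\overline{\tau}=\overline{\tau}\setminus\tau$. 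Thus it suffices to show that $\partial\overline{\tau}$ contains at least two $(p-1)$-cells.

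By regularity, $\overline{\tau}$ is homeomorphic to the closed ball $D^p$ in such a way that $\partial\overline{\tau}$ is a subcomplex of $K$ homeomorphic to $S^{p-1}$, with the induced CW-structure again regular. I would argue by contradiction, assuming $\partial\overline{\tau}$ has at most one $(p-1)$-cell. It cannot have none: then $\partial\overline{\tau}$ would be a complex of dimension at most $p-2$, which for $p\geq 1$ cannot be homeomorphic to $S^{p-1}$ (already for $p=1$ it would be empty, not $S^0$). If it has exactly one $(p-1)$-cell $\sigma$, then $\partial\overline{\tau}\setminus\overline{\sigma}$ is open in $\partial\overline{\tau}$ and contained in its $(p-2)$-skeleton, hence of covering dimension at most $p-2$; since $S^{p-1}$ is a manifold, a nonempty open subset would have dimension $p-1$, so $\partial\overline{\tau}\setminus\overline{\sigma}=\emptyset$, i.e. $\partial\overline{\tau}=\overline{\sigma}$. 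But $\overline{\sigma}$ is homeomorphic to $D^{p-1}$ by regularity, and $D^{p-1}$ is not homeomorphic to $S^{p-1}$, a contradiction. Hence $\#\partial(\tau)\geq 2$.

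For the case $p=1$ one can also argue directly and more concretely: the attaching map $S^0\to K^{(0)}$ of an edge of a regular CW-complex is injective, so the two endpoints of $\tau$ are distinct vertices and $\#\partial(\tau)=2$. I expect the only delicate point to be the bookkeeping that identifies $\partial(\tau)$ in the poset with the set of codimension-one cells of the geometric boundary $\partial\overline{\tau}$; this rests entirely on the fact that the face poset of a regular CW-complex is graded by dimension with codimension-one covers, which we take from \cite{Lundell}. Once that identification is in place the argument above closes the proof, and in fact yields the stronger assertion that $\#\partial(\tau)\geq 2$ for every cell of positive dimension, so $\Delta(K)$ has no down-beat points.
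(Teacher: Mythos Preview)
Your argument is correct. The paper itself does not prove this lemma, prefacing it only with the remark that it is easy to check; your proposal supplies a complete and standard argument, reducing the claim to the topological fact that the $(p-1)$-sphere boundary of a regular $p$-cell cannot be covered by a single closed $(p-1)$-disk.
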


Therefore, all posets coming from simplicial complexes by the McCord functor $\Delta$ are down-wide. However, not every down-wide poset is the face poset of some simplicial complex.

\begin{example}
	The poset $X$ pictured in Figure \ref{fig:cellular_poset_which_is_face_poset} is down-wide.
	\begin{figure}[htbp]
	\centering
	\includegraphics[scale=0.5]{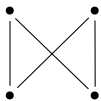}
	\caption{A down-wide poset which is not the face poset of any simplicial complex.}
	\label{fig:cellular_poset_which_is_face_poset}
\end{figure}
However, it is not the face poset of any simplicial complex $K$. Otherwise $K$ would have two $0$-simplices and two $1$-simplices.
\end{example}

\begin{definition}[\cite{Minian}]
A poset $X$ is a {\it model} of a CW-complex $K$ if the geometric realization of $\mathcal{K}(X)$ is homotopy equivalent to $K$.
\end{definition}

Observe that a subposet $Y$ of a poset $X$ is an open subposet (seeing both posets as finite spaces) if whenever $x\in Y$ and $y\leq x$, then $y\in Y$. Therefore, an open subposet of a down-wide poset is down-wide.

Observe that the properties of being two-wide and down-wide do not imply each other.

\subsection{(Homologically) admissible posets}

We recall the notion of (homologically) admissible posets introduced by Minian \cite{Minian}. We denote by $\mathcal{H}(X)$ the Hasse diagram associated to the poset $X$. 

\begin{definition}[\cite{Minian}]
	Let $X$ be a poset. An edge $(w,x)\in \mathcal{H}(X)$ is admissible if $\widehat{U}_x-\{w\}$ is homotopically trivial. A poset is {\it admissible} if all its edges are admissible.
\end{definition}

In order to check if an edge $(w,x)\in \mathcal{H}(X)$ is admissible, one needs to compute the higher homotopy groups of $\widehat{U}_x-\{w\}$, which is a difficult problem. That is why we introduce the following weaker notion:

\begin{definition}
	Let $X$ be a poset. An edge $(w,x)\in \mathcal{H}(X)$ is $1$-admissible if $\widehat{U}_x-\{w\}$ is simply connected. A poset is $1$-{\it admissible} if all its edges are $1$-admissible.
\end{definition}

We also recall the notion of homologically admissible poset, which is weaker than admissible poset. Recall that given a poset $X$, since it is also a finite topological space, its singular homology is defined. 

\begin{definition}[\cite{Minian}]
	Let $X$ be a poset. An edge $(w,x)\in \mathcal{H}(X)$ is homologically admissible if $\widehat{U}_x-\{w\}$ is acyclic. A poset is {\it homologically admissible} if all its edges are homologically admissible.
\end{definition}

\begin{remark}
	Observe that by the Hurewicz Theorem, an edge $(w,x)\in \mathcal{H}(X)$ is admissible if and only if it is $1$-admissible and homologically admissible.
\end{remark}

\begin{remark}
	The face posets of regular CW-complexes are admissible, and therefore homologically admissible \cite[Remark 2.13]{Minian}. However, not every (homologically) admissible poset is the face poset of a regular CW-complex as \cite[Example 2.4]{Minian} illustrates.
\end{remark}

We present below an important class of examples of homologically admissible posets. For a detailed exposition the reader is referred to \cite{Bjorner,Minian,Negami}).

\begin{definition}[\cite{Minian}]
	A simplicial complex $K$ is a {\it closed homology manifold} of dimension $n$ if the link of every simplex has the homology of $\mathbb{S}^{n-k-1}$, where $k$ is the dimension of the simplex. 	A poset $X$ is a {\it finite closed homology manifold} if its order complex $\mathcal{K}(X)$ is a closed homology manifold.
\end{definition}

\begin{lema}{\cite[Theorem 4.6]{Minian}}\label{lem:closed_homology_manifol_is_homologically_admissible}
	If $X$ is a finite closed homology manifold, then it is homologically admissible.
\end{lema}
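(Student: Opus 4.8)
The plan is to reduce the admissibility of an edge $(w,x)\in\mathcal{H}(X)$ to a statement about the homology of a link in the order complex $\mathcal{K}(X)$, which by hypothesis is a closed homology manifold. The key geometric input is the identification of $\widehat{U}_x$ with (something carrying the homology of) a sphere, and the identification of $\widehat{U}_x-\{w\}$ with the complement of a point in that sphere, which is contractible — hence acyclic.

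First I would recall the combinatorial description of links in order complexes: for a simplex of $\mathcal{K}(X)$ corresponding to a chain $\sigma = (x_0 < x_1 < \dots < x_k)$, the link $\mathrm{lk}_{\mathcal{K}(X)}(\sigma)$ decomposes as a join of order complexes of the ``open intervals'' determined by consecutive elements of the chain (together with the tails below $x_0$ and above $x_k$); in particular, $\mathrm{lk}_{\mathcal{K}(X)}(\{x\}) \cong \mathcal{K}(\widehat{U}_x) * \mathcal{K}(\widehat{F}_x)$. Since $X$ is a finite closed homology manifold of some dimension $n$, applying the definition to the vertex $x$ (a $0$-simplex) gives that this join has the homology of $\mathbb{S}^{n-1}$. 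Next I would fix a non-minimal element $x$ and an edge $(w,x)$, so $w\prec x$ and $w$ is a maximal element of $\widehat{U}_x$. The goal is to show $\widehat{U}_x-\{w\}$ is acyclic; equivalently, by McCord's theorem (Theorem \ref{thm:mccords_thms}) it suffices to show the order complex $\mathcal{K}(\widehat{U}_x-\{w\})$ is acyclic.

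The central step is to see $\mathcal{K}(\widehat{U}_x - \{w\})$ as a deformation retract, or at least a homology-equivalent subcomplex, of something built from a link, and then to use the manifold hypothesis at a higher-dimensional simplex. Concretely, consider the edge $w\prec x$ as a $1$-simplex $\tau$ of $\mathcal{K}(X)$; its link is $\mathcal{K}(\widehat{U}_w) * \mathcal{K}((w,x)) * \mathcal{K}(\widehat{F}_x)$, where $(w,x)=\{z : w<z<x\}=\emptyset$ since $w\prec x$. So $\mathrm{lk}(\tau) \cong \mathcal{K}(\widehat{U}_w)*\mathcal{K}(\widehat{F}_x)$ has the homology of $\mathbb{S}^{n-2}$. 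One then relates $\widehat{U}_x$, $\widehat{U}_x-\{w\}$, $\widehat{U}_w$ and $\widehat{F}_x$ via Mayer--Vietoris on $\mathcal{K}(\widehat{U}_x) \cong \mathcal{K}(\widehat{U}_x - \{w\}) \cup (\text{open star of } w)$ where the star of $w$ in $\mathcal{K}(\widehat{U}_x)$ is the cone $w * \mathcal{K}(\widehat{U}_w)$ (contractible) and the intersection is $\mathcal{K}(\widehat{U}_w)$. Since $\mathcal{K}(\widehat{U}_x)$ has the homology of a point (it is a cone, as $\widehat{U}_x = U_x - \{x\}$ with $U_x$ contractible — alternatively, $\mathcal{K}(\widehat{U}_x)$ is a face of the sphere $\mathrm{lk}(x)$ and one of the two join factors, hence acyclic when the other factor is nonempty, which holds since $x$ is non-minimal forces... actually one must argue this carefully), the Mayer--Vietoris sequence forces $\widetilde H_*(\mathcal{K}(\widehat{U}_x-\{w\})) \cong \widetilde H_{*-1}(\mathcal{K}(\widehat{U}_w))$ shifted appropriately, and one finishes by showing $\mathcal{K}(\widehat{U}_w)$ is acyclic — which again follows from the manifold condition applied to a suitable simplex, by descending induction on the height of $w$.

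The main obstacle I anticipate is organizing this as a clean (probably downward) induction on height so that the join/Mayer--Vietoris bookkeeping closes up: one needs simultaneously to track that $\mathcal{K}(\widehat{U}_y)$ is acyclic for all relevant $y$ and that $\mathcal{K}(\widehat{U}_y - \{w'\})$ is acyclic for all admissible edges, and to handle the base cases (minimal elements, where $\widehat{U}_x$ is empty) and the degenerate join factors correctly. A secondary subtlety is justifying that $\mathcal{K}(\widehat{U}_x)$ itself is acyclic: this should follow because $\mathrm{lk}_{\mathcal{K}(X)}(x) = \mathcal{K}(\widehat{U}_x) * \mathcal{K}(\widehat{F}_x)$ has sphere homology and a join $A * B$ has reduced homology $\widetilde H_n(A*B) \cong \bigoplus_{i+j=n-1}\widetilde H_i(A)\otimes \widetilde H_j(B) \oplus (\mathrm{Tor\ terms})$, so a sphere can only arise when one factor is acyclic — and a symmetry/induction argument pins down which one. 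I would present the link-of-a-simplex-is-a-join computation and the join homology formula as the two recalled tools, then run the induction.
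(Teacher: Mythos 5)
Your plan founders on a false homological claim at its center. You assert that $\mathcal{K}(\widehat{U}_x)$ is acyclic, justified first by ``$\widehat{U}_x=U_x-\{x\}$ with $U_x$ contractible, hence a cone'' and second by ``a join with sphere homology must have an acyclic factor''. Both justifications fail. Removing the apex destroys the cone: already for the face poset of a triangulated closed manifold, $\widehat{U}_x$ is the poset of proper faces of the simplex $x$, so $\mathcal{K}(\widehat{U}_x)$ is the barycentric subdivision of $\partial x\cong\mathbb{S}^{h(x)-1}$, not acyclic. And the join of two spheres is a sphere ($\mathbb{S}^a*\mathbb{S}^b\cong\mathbb{S}^{a+b+1}$), so $\mathrm{lk}(x)=\mathcal{K}(\widehat{U}_x)*\mathcal{K}(\widehat{F}_x)$ having the homology of $\mathbb{S}^{n-1}$ forces neither factor to be acyclic; in a finite closed homology manifold both factors have sphere homology, and in particular $\widehat{U}_x$ has the homology of $\mathbb{S}^{p-1}$, $p=h(x)$. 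The same objection kills your closing step, where you want $\mathcal{K}(\widehat{U}_w)$ acyclic by induction: it has the homology of $\mathbb{S}^{p-2}$. Note also the internal inconsistency: even granting your premises, Mayer--Vietoris with $B=w*\mathcal{K}(\widehat{U}_w)$ acyclic and $A\cup B$ acyclic would give $\widetilde H_*(\widehat{U}_x-\{w\})\cong\widetilde H_*(\widehat{U}_w)$ (no shift), so your induction would output sphere homology for $\widehat{U}_x-\{w\}$, the opposite of the goal. With the correct inputs, your (correct) decomposition $\mathcal{K}(\widehat{U}_x)=\mathcal{K}(\widehat{U}_x-\{w\})\cup\bigl(w*\mathcal{K}(\widehat{U}_w)\bigr)$, intersection $\mathcal{K}(\widehat{U}_w)$, yields in the critical degrees only an exact sequence $0\to\widetilde H_{p-1}(\widehat{U}_x-\{w\})\to R\to R\to\widetilde H_{p-2}(\widehat{U}_x-\{w\})\to 0$, so acyclicity is equivalent to the middle map being an isomorphism --- and that cannot be read off from homology groups of links alone.

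The missing idea is precisely this local, orientation-type statement, which is the substantive content of the cited result of Minian (the paper itself gives no proof; it quotes \cite[Theorem 4.6]{Minian}). One needs that $\widehat{U}_x$ not only has the homology of $\mathbb{S}^{p-1}$ but behaves like a closed homology manifold of dimension $p-1$, identifies $\mathcal{K}(\widehat{U}_x-\{w\})$ with the complement of the open star of $w$ (as you do), and then runs the ``homology sphere minus a point is acyclic'' argument: by the star--link excision, $H_*(\widehat{U}_x,\widehat{U}_x-\{w\})\cong\widetilde H_{*-1}(\widehat{U}_w)$ is the local homology at $w$, concentrated in degree $p-1$ and of rank one, and the top class of $\widehat{U}_x$ maps isomorphically onto this local class, so the long exact sequence of the pair kills all reduced homology of $\widehat{U}_x-\{w\}$. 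That top-degree isomorphism uses the homology-manifold structure (a fundamental class restricting to local orientations), not merely the sphere homology of the links; this is the step your ``bookkeeping'' would have to supply and cannot, as set up.
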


As a consequence of this result, the theory developed for homologically admissible manifolds can be applied to study the topology of triangulable homology manifolds by means of their order triangulations.

We end the subsection by relating the properties of being (ho\-mo\-lo\-gi\-cally)-admissible with those of being two-wide and down-wide. First, it is easy to check the flowing lemma:

\begin{lemma}\label{lemma:homologiaclly_admissible_is_down-wide}
	Let $X$ be a poset. If $X$ is homologically admissible, then it is down-wide.
\end{lemma}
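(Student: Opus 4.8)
The plan is to prove the contrapositive, or equivalently to argue directly: suppose $X$ is homologically admissible and let $x\in X$ be non-minimal; I must show $\#\partial(x)\geq 2$. Since $x$ is non-minimal, $\widehat{U}_x\neq\emptyset$, so $\partial(x)\neq\emptyset$; pick $w\in\partial(x)$. Because $X$ is homologically admissible, the edge $(w,x)\in\mathcal{H}(X)$ is admissible, which means $\widehat{U}_x-\{w\}$ is acyclic — in particular non-empty (the empty space is not acyclic, since its reduced homology in degree $-1$ is $\Z$, or simply because acyclic spaces are by convention non-empty). Hence there is some $w'\in\widehat{U}_x$ with $w'\neq w$.

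The one remaining point is to upgrade "$w'\in\widehat{U}_x$, $w'\neq w$" to "$\partial(x)$ has at least two elements", i.e.\ to produce a second element covered by $x$. If $w'\prec x$ we are done. Otherwise $w'<x$ but $w'$ is not covered by $x$, so by finiteness of the poset there is a chain $w'<z_1<\cdots<z_k\prec x$ with $k\geq 1$; then $z_k\in\partial(x)$ and $z_k\neq w$ would need checking — but in fact it is cleaner to observe that we only need \emph{some} element of $\partial(x)$ distinct from $w$, and since $\widehat{U}_x$ is covered by the sets $U_z$ for $z\in\partial(x)$ together with $\{$nothing else$\}$ — more precisely every element of $\widehat{U}_x$ lies below some element of $\partial(x)$ — if $\partial(x)=\{w\}$ then every element of $\widehat{U}_x$ would lie in $U_w$, forcing $w$ to be the maximum of $\widehat{U}_x$ and hence $\widehat{U}_x-\{w\}=\widehat{U}_w$ to be a minimal open set, which is contractible, hence acyclic, giving no contradiction by itself. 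So that route needs care.

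The clean argument: assume for contradiction $\partial(x)=\{w\}$. Then $w$ is the unique maximal element of $\widehat{U}_x$, because any maximal element of $\widehat{U}_x$ must be covered by $x$. Thus $w$ is the maximum of $\widehat{U}_x$, so $x$ is a down-beat point and $\widehat{U}_x-\{w\}=\widehat{U}_w=U_w-\{w\}$... but this set \emph{is} acyclic (indeed contractible, as $U_w-\{w\}$ may itself fail to be so in general — rather one notes $\widehat U_x-\{w\}$ when $w=\max\widehat U_x$ equals $\widehat U_w$, which need not be acyclic). The sharp contradiction instead comes from: $\widehat{U}_x-\{w\}$ acyclic means in particular path-connected and non-empty; but it sits inside $\widehat U_x$ with $w$ removed where $w$ was the maximum, so every point of $\widehat U_x-\{w\}$ lies below $w$; hmm. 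I would therefore instead settle the statement simply by: acyclic implies non-empty, so $\widehat U_x-\{w\}\neq\emptyset$; pick $w'$ in it; replacing $w'$ by a maximal element of $\widehat U_x$ above it (which exists by finiteness and is $\neq w$ only if... ) — to guarantee the maximal element chosen is $\neq w$, restrict to the finite poset $\widehat U_x - U_w$, note it is non-empty (it contains $w'$ unless $w'\le w$; if every element of $\widehat U_x-\{w\}$ were $\le w$ then $\widehat U_x - \{w\} = \widehat U_w$, and one invokes that $\widehat U_w$ is homotopically trivial only when... ).

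The hard part, then, is precisely this bookkeeping: ensuring the second covering relation, not merely a second element below $x$. I expect the intended proof simply asserts that if $\partial(x)=\{w\}$ then $\widehat U_x$ has maximum $w$ and hence $\widehat U_x - \{w\}$ equals $\widehat U_w$; and then uses that $X$ down-beat-point-free is \emph{not} assumed — so the real content must be that $\widehat U_w$, being itself a poset with a candidate for contradiction, cannot be acyclic for \emph{the chosen} $w$, which fails in general (e.g.\ $\widehat U_w$ could be contractible). So the genuinely careful version picks $w$ to be a \emph{minimal} non-minimal element, i.e.\ an element of height $1$: then $\widehat U_x - \{w\}$ for such $x$ of height $2$ covering only $w$... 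Given these subtleties, I would present the proof as: the edge $(w,x)$ being admissible forces $\widehat U_x-\{w\}$ non-empty for \emph{every} $w\prec x$; if $w_1\prec x$ is the only cover, apply this to $w=w_1$ to get $w'\in\widehat U_x$, $w'\neq w_1$, then take a maximal element $m$ of the non-empty finite subposet $\{u\in\widehat U_x : u\not\le w_1\}$ (non-empty as it contains any $w'\not\le w_1$; and if \emph{every} $w'\neq w_1$ satisfied $w'\le w_1$ then $w_1=\max\widehat U_x$, whence $x$ covers $w_1$ only and $\widehat U_x-\{w_1\}=\widehat U_{w_1}$ — now iterate downward: $w_1$ is then itself covered by at most... ) — and this descent terminates at a minimal non-minimal element where the contradiction is immediate because its $\widehat U$ is a non-empty antichain of minimal elements with $\geq 1$ element, and removing one leaves $\widehat U_x - \{w\}$ which must be acyclic hence non-empty hence has $\geq 2$ minimal elements. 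I would write this descent argument out carefully, flagging the termination at height-$1$ elements as the crux.
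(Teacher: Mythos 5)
Your instinct about where the difficulty lies is exactly right: acyclicity of $\widehat{U}_x-\{w\}$ only gives a second element \emph{below} $x$, not a second element \emph{covered by} $x$, and in the problematic case $\partial(x)=\{w\}$ one has $\widehat{U}_x-\{w\}=\widehat{U}_w$, which admissibility merely forces to be acyclic --- no immediate contradiction. (The paper itself offers no written proof, only the remark that the empty set is not to be regarded as acyclic, so the naive one-line argument is all it gestures at; your worry that this is insufficient in general is legitimate.) The problem is that your proposal never closes this gap: it is an exploration with several abandoned branches and a final ``descent'' that is not actually specified. In particular, the property you would need to pass from $x$ down to $w$ is not the unique-cover property ($w$ may cover many elements, and your sentence ``$w_1$ is then itself covered by at most\dots'' trails off precisely because nothing of the sort follows); and the termination you invoke at height~$1$ (that a height-$1$ element must have at least two covers) does not by itself contradict anything known about $w$ when $x$ has height $\geq 2$. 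So as written there is a genuine missing step, not just missing bookkeeping.

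What does make the descent work is the following inductive claim, which your sketch never isolates: \emph{in a homologically admissible poset, $\widehat{U}_z$ is not acyclic for any $z$} (empty included, by the stated convention). For $z$ minimal this is the convention; for the inductive step, suppose $\widehat{U}_z$ were acyclic and pick $v\prec z$, so that $v$ is maximal in $\widehat{U}_z$ and admissibility gives $\widehat{U}_z-\{v\}$ acyclic. Then $\mathcal{K}(\widehat{U}_z)=\mathcal{K}(\widehat{U}_z-\{v\})\cup\mathcal{K}(U_v)$ with intersection $\mathcal{K}(\widehat{U}_v)$, and Mayer--Vietoris (the cone $\mathcal{K}(U_v)$ being acyclic) forces $\widehat{U}_v$ to be acyclic --- contradicting the inductive hypothesis at the lower-height element $v$ (if $\widehat{U}_v=\emptyset$ one instead gets that $\widehat{U}_z$ is disconnected, again a contradiction). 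With this claim in hand, your reduction finishes the lemma immediately: if $\partial(x)=\{w\}$ then $w=\max\widehat{U}_x$, so $\widehat{U}_x-\{w\}=\widehat{U}_w$ would be acyclic by admissibility of $(w,x)$, contradicting the claim at $w$. Some propagation mechanism of this kind (Mayer--Vietoris, or an equivalent excision/long-exact-sequence argument) is the crux that your proposal identifies as needed but does not supply.
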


\begin{remark}
	In Lemma \ref{lemma:homologiaclly_admissible_is_down-wide} it is assumed that the empty set is not acyclic.
\end{remark}

Second, the cellular homology developed by Minian in \cite{Minian} allows to adapt the ideas of \cite[Theorem 1.2 (iii)]{Forman2} so the proposition below follows easily:

\begin{prop}\label{prop:homologically_admissible_is_two_wide}
	Let $X$ be a poset. If $X$ is homologically admissible, then it is two-wide.
\end{prop}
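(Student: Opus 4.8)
The plan is to imitate Forman's proof of the analogous statement for regular $CW$-complexes (\cite[Theorem~1.2(iii)]{Forman2}), using in place of the cellular chain complex of a regular $CW$-complex the cellular chain complex $(\mathcal{C}_*(X),\partial)$ that Minian \cite{Minian} attaches to a homologically admissible poset. First I would recall from \cite{Minian} the features of this complex that the argument needs: $\mathcal{C}_n(X)$ is free on the elements of height $n$; the boundary operator has the form $\partial(x)=\sum_{w\prec x}[x:w]\,w$, so that $X$ is in particular graded, with covering relations joining consecutive heights; $\partial^{2}=0$; and the incidence number $[x:w]$ of every covering relation $w\prec x$ equals $\pm1$. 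The last point is exactly where homological admissibility is used. Since $\widehat{U}_x-\{w\}$ is acyclic, the long exact sequence of the pair $(\widehat{U}_x,\widehat{U}_x-\{w\})$ collapses to isomorphisms $\widetilde{H}_n(\widehat{U}_x)\cong H_n(\widehat{U}_x,\widehat{U}_x-\{w\})$; as $w$ is maximal in $\widehat{U}_x$, so that its minimal open set there is the contractible set $U_w$, a Mayer--Vietoris argument for the cover $\widehat{U}_x=(\widehat{U}_x-\{w\})\cup U_w$ identifies these groups with $\widetilde{H}_{n-1}(\widehat{U}_w)$. Iterating downward one sees that every $\widehat{U}_x$ has the reduced homology of $\mathbb{S}^{h(x)-1}$; hence $\widetilde{H}_{h(x)-1}(\widehat{U}_x)\cong\Z$, and the comparison isomorphism above, taken in this top degree, is multiplication by $\pm1$ --- which is why the incidence number $[x:w]$, defined through that comparison map exactly as in the regular $CW$ case, comes out to be $\pm1$.

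With this input I would argue by contradiction. Suppose $X$ is not two-wide: there are $x\prec z\prec y$ for which $z$ is the only element with $x\prec z'\prec y$. By gradedness $h(z)=h(x)+1$ and $h(y)=h(x)+2$, so $[y:z]$ and $[z:x]$ are defined, each equal to $\pm1$. Reading off the coefficient of $x$ in $\partial^{2}(y)=0$ gives
\[
0=\sum_{w\,:\,x\prec w\prec y}[y:w]\,[w:x]=[y:z]\,[z:x]=\pm1,
\]
which is impossible. Hence $X$ is two-wide.

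The step I expect to require the real work is the first one: verifying that Minian's cellular chain complex, with incidence numbers normalised to $\pm1$ on covering relations, is genuinely available for an \emph{arbitrary} homologically admissible poset, and not only for a finite homology manifold or the face poset of a regular $CW$-complex. That is precisely the force of the hypothesis --- homological admissibility is exactly what makes the relative groups $H_*(\widehat{U}_x,\widehat{U}_x-\{w\})$ collapse onto the top, free, rank-one reduced homology of $\widehat{U}_x$, and this both forces gradedness and pins down the incidence numbers. Once it is in hand, the contradiction above is the same formal consequence of $\partial^{2}=0$ that Forman exploits, so the proposition does indeed follow easily.
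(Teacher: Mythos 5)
Your proposal is correct and takes essentially the same route as the paper, which offers no detailed argument beyond the remark that Minian's cellular homology \cite{Minian} lets one adapt the ideas of \cite[Theorem 1.2 (iii)]{Forman2}. Your write-up is exactly that adaptation spelled out: homological admissibility forces gradedness and incidence numbers $\pm1$ on covering relations via the long exact sequence/Mayer--Vietoris comparison, and then $\partial^{2}=0$ excludes a unique intermediate element between $x$ and $y$.
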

%

\section[Morse Theory on posets]{Morse Theory on posets} \label{sec:morse_theory_posets}

\subsection{Definition of Morse functions}
We recall the definition of Morse function for posets introduced by Minian \cite{Minian}. It is an adaptation of Forman's theory \cite{Forman,Forman2} to the context of posets.

\begin{definition} Let $X$ be a finite poset.  A  {\it Morse function} on $X$ is a map $f\colon X \to \R$  such that, for every $x\in X$, we have
	$$
	\#\{y\in X\colon x\prec y \text{ and } f(x)\geq f(y)\}\leq 1
	$$
	and
	$$
	\#\{w\in X \colon w\prec x \text{ and } f(w)\geq f(x)\}\leq 1.
	$$
\end{definition}

\begin{definition}
		If $f$ is a Morse function, the point $x\in X$ is said to be {\it critical} if
	$$
	\#\{y\in X \colon x\prec y \text{ and } f(x)\geq f(y)\}=0
	$$
	and
	$$
	\#\{w\in X \colon w\prec x \text{ and } f(w)\geq f(x)\}=0.
	$$
\end{definition}

The set of critical points is denoted by $\crit{f}$. The images of the critical points are called {\it critical values}. The points (values) which are not critical are said to be {\it regular points} ({\it regular values}).

\subsection{Technical Lemmas for Morse functions}
We begin by stating a result that plays the role of two important theorems developed by Forman in the simplicial setting \cite[Theorems 1.2 and 1.3]{Forman2}. In fact, the proof of the following Key Lemma is much simpler in this context than in the simplicial setting so it is left for the reader.

\begin{lema}[Key Lemma]\label{lemma:key_lemma}
	Suppose that $X$ is a finite two-wide poset and there are two elements $w<y$ such that $w\nprec y$. Then there are elements $x,\tilde{x}$ such that:
	\begin{itemize}
		\item $x\neq \tilde{x}$
		\item $w\prec x < y$ and $w \prec \tilde{x} < y$.
	\end{itemize}
\end{lema}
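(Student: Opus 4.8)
The plan is to reduce the whole statement to a single application of the two-wide hypothesis. First I would use finiteness of $X$ to choose a saturated chain from $w$ to $y$, that is, a chain $w = z_0 \prec z_1 \prec \cdots \prec z_n = y$; such a chain exists because any chain between $w$ and $y$ can be refined until consecutive elements are related by $\prec$, and the refinement process must terminate since $X$ is finite. The hypothesis $w \nprec y$ forces $n \geq 2$, so in particular $z_1 \neq y$, and from $z_1 \prec z_2 \leq z_n = y$ we get $z_1 < y$.

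Next I would set $x := z_1$ and apply the definition of two-wide poset to the configuration $w \prec z_1 \prec z_2$ (here $z_2$ is available precisely because $n \geq 2$). This produces an element $\tilde{x} := z_1' \in X$ with $z_1' \neq z_1$ and $w \prec z_1' \prec z_2$. Since $z_2 \leq z_n = y$, we obtain $z_1' \prec z_2 \leq y$, hence $w \prec \tilde{x} < y$; and $w \prec x = z_1 < y$ was already observed. As $x = z_1 \neq z_1' = \tilde{x}$, the pair $(x,\tilde{x})$ has the required properties.

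I do not expect any genuine obstacle here: the only two points deserving a line of justification are the existence of a saturated chain in a finite poset and the remark that $w \nprec y$ makes that chain have length at least $2$, which is exactly what supplies the intermediate element $z_2$ on which the two-wide axiom is invoked. The boundary case $n = 2$ (where $z_2 = y$) is handled uniformly by the same argument, since then $z_1' \prec y$ already yields $z_1' < y$. This matches the paper's remark that the proof is much simpler than its simplicial counterpart, where one must instead argue directly with incidence numbers of regular cells.
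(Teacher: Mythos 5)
Your argument is correct and complete: refining $w<y$ to a saturated chain $w=z_0\prec z_1\prec\cdots\prec z_n=y$ with $n\geq 2$ (which uses exactly finiteness and $w\nprec y$) and then applying the two-wide condition to $w\prec z_1\prec z_2$ is precisely the short argument the paper has in mind when it leaves the Key Lemma to the reader. The only delicate points --- existence of the saturated chain, $n\geq 2$, and the case $z_2=y$ --- are all handled explicitly in your write-up, so nothing is missing.
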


\begin{remark}
	Observe that Lemma \ref{lemma:key_lemma} does not hold in general for finite spaces. As an example, consider the poset of Figure \ref{fig:counterexample_exclussion_lemma_h_reular_posets} taking the points labeled as $z$ and $y$.
\end{remark}


%

\begin{definition}
	Given a poset $X$, a Morse function $f\colon X\to \R$ is said to satisfy the {\it Exclusion condition} if for every regular point $x \in X$, exactly one of the following conditions holds:
	\begin{enumerate}
		\item There exists exactly one $y \in X$, $x\prec y$, such that $f(x)\geq f(y)$,
		\item There exists exactly one $w \in X$, $w\prec x$, such that $f(w) \geq f(x)$.
	\end{enumerate}
\end{definition}

The following result plays the role, in the context of finite spaces, of the Exclusion Lemma \cite[Lemma 2.5]{Forman2}.

\begin{lema}[Exclusion Lemma]\label{lma:exclusion:finite}
	Let $X$ be a finite two-wide poset and $f\colon X\to \R$ a Morse function on $X$. Then $f\colon X\to \R$ satisfies the Exclusion condition.
\end{lema}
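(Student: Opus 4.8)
The plan is to argue by contradiction using the Morse-function inequalities together with the Key Lemma (Lemma \ref{lemma:key_lemma}). Let $x\in X$ be a regular point. By definition of regularity, at least one of the two defining sets for criticality is non-empty, and by the Morse condition each of those two sets has cardinality at most $1$. Hence at least one of conditions (1), (2) of the Exclusion condition holds; the content to be proved is that they cannot both hold simultaneously. So I would assume, for contradiction, that there exist $y\in X$ with $x\prec y$ and $f(x)\ge f(y)$, and also $w\in X$ with $w\prec x$ and $f(w)\ge f(x)$. Chaining the inequalities gives $f(w)\ge f(x)\ge f(y)$, and in the poset we have $w\prec x\prec y$, so in particular $w<y$ and, since there is an intermediate element $x$, we have $w\nprec y$.

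Now I would invoke the Key Lemma: since $X$ is two-wide, $w<y$ and $w\nprec y$, there exist $x,\tilde x$ with $x\neq\tilde x$, $w\prec x<y$ and $w\prec\tilde x<y$. (Relabel if necessary so that one of these two elements is the original $x$; the Key Lemma produces at least one element distinct from $x$ covering $w$ and below $y$ — call it $\tilde x$.) The goal is to derive from the existence of $\tilde x$ a second element, besides $x$ or besides $y$, witnessing a failure of one of the two Morse inequalities at some point, contradicting that $f$ is a Morse function. Concretely, I expect to split into the cases $\tilde x\prec y$ versus $x\prec\tilde x\prec\cdots<y$ (a longer chain), reduce to the covering situation, and then compare $f(\tilde x)$ with $f(w)$ and with $f(y)$: if $f(w)<f(\tilde x)$ then at $w$ we would have two elements $y'$ (a cover of $w$ on a saturated chain up to $y$) with... — more carefully, the cleanest route is to note that on the saturated chain from $w$ up to $y$ through $\tilde x$, some cover relation must reverse $f$ (since $f(w)\ge f(y)$), producing a second down-violation at the relevant point that, combined with the one already coming from the chain through $x$, breaks the Morse inequality. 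I would also use the symmetric ``up'' count at $w$: $w\prec x$ with $f(w)\ge f(x)$ already uses up the single allowed up-violation at $w$, so if a saturated chain from $w$ to $y$ through $\tilde x$ forces $w\prec \tilde x$ (the length-one case) with $f(w)\ge f(\tilde x)$, that is an immediate contradiction; and if the chain has length $\ge 2$, then pushing the inequality $f(w)\ge f(y)$ along it yields a cover $a\prec b$ on the chain with $f(a)\ge f(b)$, and one arranges that either this doubles a violation already present or occurs at a point where it combines with the chain through $x$.

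The main obstacle I anticipate is the bookkeeping in this last step: the Key Lemma only guarantees $\tilde x < y$, not $\tilde x \prec y$, so the chain from $\tilde x$ to $y$ may be long, and one has to carefully locate the cover relation along it (or along the chain $w\prec x\prec\cdots$) where the $f$-inequality is violated and check that it genuinely produces a \emph{second} such violation at a single point — i.e. that the two chains (through $x$ and through $\tilde x$) do not share the violating cover relation. I expect this is handled by taking $\tilde x$ to be a cover of $w$ (which the Key Lemma does give) distinct from $x$: then at the point $w$ we look at covers \emph{above} $w$, and the inequality $f(w)\ge f(x)$ already occupies the unique slot allowed by the first Morse inequality at $w$; any further cover $\tilde x$ of $w$ with $f(w)\ge f(\tilde x)$ is an immediate contradiction, so necessarily $f(\tilde x) > f(w)$. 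Combined with $f(w)\ge f(y)$ this forces $f(\tilde x) > f(y)$, and then walking down a saturated chain from $\tilde x$ to $y$ one finds a cover $b\prec\tilde x$... — here one iterates: the point just below $y$ on that chain, say $v$ with $v\prec y$ (or $y$ itself if $\tilde x\prec y$), will have to satisfy $f(v)\ge f(y)$ for some $v\prec y$, and together with $x\prec y$, $f(x)\ge f(y)$ this gives two distinct elements $v,x$ covered by $y$ with $f\ge f(y)$, violating the second Morse inequality at $y$ — unless $v=x$, which is excluded once one is careful about which chain one descends. Tidying up exactly which element plays the role of $v$ and verifying $v\neq x$ is the crux, and I would dispatch it by choosing the descending chain from $\tilde x$ to stay away from $x$, which is possible precisely because $\tilde x\neq x$ and $w\prec x$, $w\prec\tilde x$ are the bottom of two genuinely different saturated chains up to $y$.
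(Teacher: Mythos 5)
Your set-up and the first half of your argument are right and coincide with the paper's: assuming both conditions hold at the regular point $x$ gives $w\prec x\prec y$ with $f(y)\le f(x)\le f(w)$, and since the single allowed violation at $w$ in the upward direction is already occupied by $x$, any other cover $\tilde x$ of $w$ must satisfy $f(\tilde x)>f(w)$. The genuine gap is in how you try to close the contradiction. Because you route through Lemma \ref{lemma:key_lemma}, you only know $w\prec\tilde x< y$, and your attempt to convert $f(\tilde x)>f(y)$ into a violation \emph{at $y$ itself} does not work: from $f(\tilde x)>f(y)$ one can only conclude that \emph{some} cover relation $a\prec b$ on a saturated chain from $\tilde x$ up to $y$ satisfies $f(a)\ge f(b)$, not that the element $v\prec y$ at the top of that chain does — the drop may occur at any interior cover, and a single reversal there is perfectly compatible with $f$ being a Morse function. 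Your proposed repair (choosing the chain so as to avoid $x$) addresses the side issue $v\neq x$, not the real one, namely that $f(v)\ge f(y)$ is simply not forced; so as written the argument does not reach a contradiction.

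The missing observation, which is how the paper argues, is that you do not need the Key Lemma in its weakened form: the definition of two-wide applies directly to the covering triple $w\prec x\prec y$ and yields $\tilde x\neq x$ with $w\prec\tilde x\prec y$, i.e. a cover relation at \emph{both} ends. Then the Morse condition at $y$ in the downward direction (whose single allowed slot is already occupied by $x$, since $f(x)\ge f(y)$) gives $f(\tilde x)<f(y)$, while the Morse condition at $w$ gives $f(w)<f(\tilde x)$ exactly as you observed, and the chain $f(x)\le f(w)<f(\tilde x)<f(y)\le f(x)$ is the desired contradiction. Without securing $\tilde x\prec y$ (or some valid substitute for the inequality $f(\tilde x)<f(y)$), your proposal is incomplete.
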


\begin{proof}
	First of all, since $x$ is not critical, then at least one of the conditions holds. We will see that the conditions are mutually exclusive. So, assume both conditions hold and we will arrive to a contradiction. By Condition $2$, $x$ is not a minimal element. By Lemma \ref{lemma:key_lemma} there exists $x'$ such that: $x'\neq x$ and  $y>x'\succ w$. Since $x'<y$, Condition $2$ of the definition of Morse function applied to $y$ gives $f(x')<f(y)$. Hence $f(x')<f(x)$. By Condition $1$ applied to $w$, it holds that $f(w)<f(x')$. As a consequence we obtain the following chain of inequalities:
	\begin{equation*}
	f(x)\leq f(w)<f(x')<f(y)\leq f(x).
	\end{equation*}
	So $f(x)<f(x)$, which is a contradiction.
\end{proof}

It is interesting to point out that the Exclusion Lemma does not necessarily hold in general for posets which are not two-wide, as the following example shows.

\begin{example}
	Consider the following down-wide model of $\mathbb{S}^3$ (taken from \cite[Fig.2]{Minian}) with the Morse function $f$ represented in Figure \ref{fig:counterexample_exclussion_lemma_h_reular_posets} by the labelling of the points.
	\begin{figure}[htbp]
		\centering
		\includegraphics[scale=0.5]{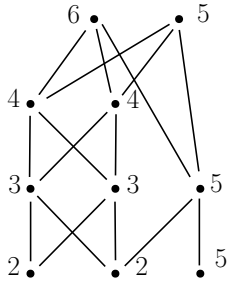}
		\caption{Lemma \ref{lma:exclusion:finite} may not hold for posets which are not two-wide.}
		\label{fig:counterexample_exclussion_lemma_h_reular_posets}
	\end{figure}
Hence Lemma \ref{lma:exclusion:finite} may not hold for arbitrary posets. 
\end{example}

\subsection{Matchings}

We recall the definition of matching introduced to the context of discrete Morse theory by Chari \cite{Chari} and further developed by Minian \cite{Minian}.

\begin{definition}
	A {\it matching}  $\mathcal{M}$ in a poset $X$ is a subset $\mathcal{M} \subseteq X\times X$ such that
	\begin{itemize}
		\item
		$(x, y) \in \mathcal{M}$ implies $x\prec y$;
		\item
		each $x \in X$ belongs to at most one element in $\mathcal{M}$.
	\end{itemize}
\end{definition}

\begin{definition}
	A matching is admissible if each element of the matching is admissible. The notions of homologically and $1$-admissible matchings are defined analogously.
\end{definition}

Let $\mathcal{H}(X)$ be the Hasse diagram of a poset $X$.  If $\mathcal{M}$ is a matching in $X$, write $\mathcal{H}_{\mathcal{M}}(X)$ for the directed graph obtained from $\mathcal{H}(X)$ by reversing the orientations of the edges which are not in $\mathcal{M}$.

\begin{definition}
	  The matching $\mathcal{M}$ is a {\it Morse matching} on $X$ if $\mathcal{H}_{\mathcal{M}}(X)$ is acyclic.  Any point of $\mathcal{H}(X)$ not incident with an edge of $\mathcal{M}$ is called {\it critical}. The set of all critical points of $\mathcal{M}$ is denoted $\crit({\mathcal{M}})$.
\end{definition}

Inspired by the notions of h-regular and cellular posets introduced in \cite{Minian} we introduce the following definition.
\begin{definition}
	A matching $\mathcal{M}$ in $X$ is homology-regular if for every $x^{(p)}\in \crit(\mathcal{M})$, the subspace $\widehat{U}_x$ has the homology of $\mathbb{S}^{p-1}$ where $p$ is the height of $x$. A matching $\mathcal{M}$ in $X$ is homotopy-regular if for every $x^{(p)}\in \crit(\mathcal{M})$, the subspace $\widehat{U}_x$ is a finite model of $\mathbb{S}^{p-1}$ where $p$ is the height of $x$. 
\end{definition}

\begin{example}
	Any matching defined in the poset pictured in Figure \ref{fig:counterexample_exclussion_lemma_h_reular_posets} is homotopy-regular, and therefore homology-regular, since for every $x^{(p)}\in \crit(\mathcal{M})$, the subspace $\widehat{U}_x$ has the homotopy type of a finite model of $\mathbb{S}^{p-1}$.
\end{example}

Minian proved an integration result for matchings which can be stated as follows.

\begin{theorem}\cite[Lemma 3.12]{Minian} \label{thm:given_matching_there_is_function}
	Let $X$ be a finite graded poset and let $\mathcal{M}$ be a a Morse matching on $X$. Then there is a Morse function $f\colon X \to \R$ such that $\crit(f)=\crit({\mathcal{M}})$. Moreover, the function $f\colon X \to \R$ is order preserving, that is, if $x\leq y$, then $f(x)\leq f(y)$.
\end{theorem}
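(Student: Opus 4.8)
\emph{Step 1: reducing to two local conditions.} The plan is to look for a map $f\colon X\to\R$ satisfying (a) $f(w)=f(x)$ whenever $(w,x)\in\mathcal M$, and (b) $f(w)<f(x)$ whenever $w\prec x$ and $(w,x)\notin\mathcal M$, and to check that such an $f$ already has all the required properties. Order preservation follows at once: along any saturated chain from $x$ to $y$ the value of $f$ is constant on matched covering steps and strictly increasing on the others, so it is nondecreasing, hence $x\le y$ implies $f(x)\le f(y)$. That $\crit(f)=\crit(\mathcal M)$ is then a short inspection of the two counting sets in the definitions of Morse function and of critical point. If $x$ is unmatched, every covering relation at $x$ is unmatched, so by (b) the sets $\{y\colon x\prec y,\ f(x)\ge f(y)\}$ and $\{w\colon w\prec x,\ f(w)\ge f(x)\}$ are both empty and $x$ is critical for $f$. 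If $x$ is matched, say $(x,y_0)\in\mathcal M$ with $x\prec y_0$ (the case of a matched partner below $x$ is symmetric), then (a) gives $f(x)=f(y_0)$, so the first of these sets equals $\{y_0\}$ while (b) makes the second empty; thus $x$ is regular and both Morse inequalities at $x$ hold with bound $1$. Running the same check at every element (e.g.\ at $y_0$ the roles are mirrored) shows $f$ is a genuine Morse function.

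\emph{Step 2: constructing $f$ by a topological sort.} Let $\overline X=X/\!\!\sim$ be the quotient set that identifies the two elements of each pair of $\mathcal M$ and fixes all other elements, with projection $\pi\colon X\to\overline X$, and let $G$ be the directed graph on the vertex set $\overline X$ with an edge $\pi(w)\to\pi(x)$ for every unmatched covering relation $w\prec x$. Note $G$ has no loops, since $w\prec x$ together with $\pi(w)=\pi(x)$ would force $(w,x)\in\mathcal M$. If $G$ is acyclic it admits a topological ordering; composing the resulting injection $\overline X\hookrightarrow\R$ with $\pi$ produces an $f\colon X\to\R$ that is constant on the pairs of $\mathcal M$ and strictly increasing along every unmatched covering relation, i.e.\ satisfies (a) and (b). So the whole statement reduces to proving that $G$ is acyclic.

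\emph{Step 3: acyclicity of $G$ (the crux).} Suppose $G$ contains a closed directed walk, given by unmatched covering relations $a_i\prec b_i$ for $i=0,\dots,k-1$ (indices cyclic) with $\pi(b_i)=\pi(a_{i+1})$, so that each $b_i$ either equals $a_{i+1}$ or is matched to it. Writing $h$ for the degree and using that $X$ is \emph{graded} (so that a covering relation raises the degree by exactly one), we get $h(b_i)=h(a_i)+1$ and $h(a_{i+1})\in\{h(b_i)-1,\ h(b_i),\ h(b_i)+1\}$, hence in every case $h(a_{i+1})\ge h(a_i)$. Since $\sum_i\bigl(h(a_{i+1})-h(a_i)\bigr)=0$, every one of these inequalities is an equality, which forces, for each $i$, that $a_{i+1}\ne b_i$ and $(a_{i+1},b_i)\in\mathcal M$ with $a_{i+1}\prec b_i$. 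Orienting the edges of $\mathcal H_{\mathcal M}(X)$ so that matched edges point from the smaller to the larger element and unmatched edges the other way (as in the definition of $\mathcal H_{\mathcal M}(X)$; the overall choice of convention is immaterial, since reversing all orientations sends cycles to cycles), the closed walk $\cdots\to a_{i+1}\to b_i\to a_i\to b_{i-1}\to\cdots$ is then a directed cycle of $\mathcal H_{\mathcal M}(X)$, contradicting that $\mathcal M$ is a Morse matching. Therefore $G$ is acyclic and Steps 1--2 finish the proof. I expect this last step — turning a cycle of the contracted graph into a closed alternating $V$-path in $\mathcal H_{\mathcal M}(X)$, where the grading hypothesis is used essentially to keep the degree bookkeeping under control — to be the only real obstacle; everything else is formal. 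An essentially equivalent alternative is to take as a first approximation of $f$ the function assigning to each vertex of $\mathcal H_{\mathcal M}(X)$ the length of the longest directed path ending at it and then to average over matched pairs, but one still has to exclude closed $V$-paths at some point, so nothing is genuinely saved.
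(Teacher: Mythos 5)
The paper does not actually prove this statement --- it is imported from Minian \cite[Lemma 3.12]{Minian} as a citation --- so there is no in-text argument to compare yours against line by line. Your proof is correct, and it is essentially the standard ``integration'' argument for Morse matchings: contract each matched pair, orient the remaining cover relations, prove the contracted digraph is acyclic, and read off $f$ from a topological order; the acyclicity step is exactly where the grading and the acyclicity of $\mathcal{H}_{\mathcal{M}}(X)$ enter, and your degree bookkeeping there is sound. Two cosmetic points, neither of which is a gap: (i) the fact that in a graded poset (in the paper's sense, $U_x$ homogeneous for every $x$) a cover relation raises the degree by exactly one is true but deserves its one-line verification, since the paper defines gradedness via homogeneity of the sets $U_x$ rather than by postulating a rank function; (ii) the closed directed walk you build in $\mathcal{H}_{\mathcal{M}}(X)$ may repeat vertices, so strictly it is a closed walk from which a directed cycle is extracted --- harmless, since $\mathcal{H}_{\mathcal{M}}(X)$ has no loops and no $2$-cycles, and any closed walk of positive length yields a genuine directed cycle. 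Finally, note that your $f$ is constant on matched pairs, which is permitted by the paper's (non-strict) definition of Morse function, makes $\crit(f)=\crit(\mathcal{M})$ immediate, and even hands you the Exclusion condition for free.
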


As a consequence of our Exclusion Lemma for Morse functions on two-wide posets (Lemma \ref{lma:exclusion:finite}), we obtain a converse result.

\begin{theorem}\label{thm:morse_function_implies_morse_matching}
	Let $X$ be a finite poset and let $f\co X \to \R$ be a Morse function  satisfying the Exclusion condition. Then there exists an associated Morse matching $\mathcal{M}_f$ with the same critical set. In particular, given a finite two-wide poset $X$ and a Morse function $f\co X \to \R$, there exists an associated Morse matching $\mathcal{M}_f$ with $\crit(f)=\crit({\mathcal{M}})$.
\end{theorem}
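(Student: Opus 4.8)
The plan is to build the matching directly from the edges along which $f$ fails to increase, and then verify the two defining properties of a Morse matching in turn. Concretely, I would set
\[
\mathcal{M}_f := \{(w,x)\in X\times X : w\prec x \text{ and } f(w)\geq f(x)\},
\]
and first check that $\mathcal{M}_f$ is a matching. The implication $(w,x)\in\mathcal{M}_f\Rightarrow w\prec x$ is built in, so only the second axiom requires work: fixing $x\in X$, the elements of $\mathcal{M}_f$ containing $x$ are the pairs $(x,y)$ with $x\prec y$, $f(x)\geq f(y)$, together with the pairs $(w,x)$ with $w\prec x$, $f(w)\geq f(x)$. The two inequalities in the definition of a Morse function bound the number of pairs of each type by $1$, so a priori $x$ lies in at most two elements of $\mathcal{M}_f$. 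This is exactly where the Exclusion condition is needed: for regular $x$ it forbids both types from occurring simultaneously, so $x$ lies in exactly one pair, while for critical $x$ neither type occurs, so $x$ lies in none. The same argument simultaneously shows that a point is unmatched precisely when it is critical for $f$, i.e.\ $\crit(\mathcal{M}_f)=\crit(f)$.

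The remaining --- and main --- step is to show that $\mathcal{M}_f$ is a Morse matching, that is, that the modified Hasse diagram $\mathcal{H}_{\mathcal{M}_f}(X)$ is acyclic. The observation I would isolate is that $f$ is weakly decreasing along every directed edge of $\mathcal{H}_{\mathcal{M}_f}(X)$ and strictly decreasing along every edge not coming from $\mathcal{M}_f$: a matched covering relation $w\prec x$ is oriented $w\to x$ and satisfies $f(w)\geq f(x)$ by definition of $\mathcal{M}_f$, whereas an unmatched covering relation $w\prec x$ is oriented $x\to w$ and, precisely because $(w,x)\notin\mathcal{M}_f$, satisfies $f(w)<f(x)$. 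Hence along any directed cycle $v_0\to v_1\to\cdots\to v_k=v_0$ the values $f(v_i)$ are non-increasing and return to their starting value, so they are all equal; this forces every edge of the cycle to be a matched edge. But matched edges are oriented from the smaller to the larger element of their pair, so two consecutive edges $v_{i-1}\to v_i\to v_{i+1}$ of the cycle would display $v_i$ both as the larger element of one pair of $\mathcal{M}_f$ and as the smaller element of another, contradicting that $\mathcal{M}_f$ is a matching (and a directed cycle has at least two edges, since $\mathcal{H}(X)$ has no loops). Therefore no directed cycle exists, and $\mathcal{M}_f$ is a Morse matching.

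Finally, the ``in particular'' clause is immediate from Lemma \ref{lma:exclusion:finite}: every Morse function on a finite two-wide poset satisfies the Exclusion condition, so the first part applies and yields $\crit(f)=\crit(\mathcal{M}_f)$.

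I expect the genuinely delicate point to be the acyclicity argument, and within it the small leap that equality of $f$-values around a cycle forces all its edges to be matched, after which the matching property closes the argument; checking that $\mathcal{M}_f$ is a matching, by contrast, is essentially bookkeeping once one recognizes that the Exclusion condition is exactly what prevents a point from being matched both upward and downward.
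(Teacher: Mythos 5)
Your proof is correct, and since the paper states this theorem without giving an explicit proof, your argument supplies exactly the intended (standard Forman/Chari-style) one: match each cover relation $w\prec x$ with $f(w)\geq f(x)$, use the Exclusion condition to see each point lies in at most one pair (with the unmatched points being precisely $\crit(f)$), and use the fact that $f$ is non-increasing along directed edges of $\mathcal{H}_{\mathcal{M}_f}(X)$ and strictly decreasing along the reversed (unmatched) ones to rule out directed cycles. The only point worth flagging is the orientation convention on $\mathcal{H}_{\mathcal{M}_f}(X)$, which you chose consistently with the paper's notation; had the opposite convention been intended, the same argument works verbatim with ``decreasing'' replaced by ``increasing''.
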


\begin{corollary}\label{coro:exclusive_morse_function_implies_exclusive_order_preserving_one}
	Let $X$ be a finite graded poset and let $f\co X \to \R$ be a Morse function  satisfying the Exclusion condition. Then there exists an order preserving Morse function $f'\colon X\to \R$ satisfying the Exclusion condition with the same associate Morse matching that $f$.
\end{corollary}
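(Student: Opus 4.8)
The plan is to chain together Theorem~\ref{thm:morse_function_implies_morse_matching} and Theorem~\ref{thm:given_matching_there_is_function}, but with a little care because the order-preserving function produced by the latter need not come with the Exclusion condition for free. First I would apply Theorem~\ref{thm:morse_function_implies_morse_matching} to the given Morse function $f$: since $f$ satisfies the Exclusion condition, there is an associated Morse matching $\mathcal{M}_f$ on $X$ with $\crit(\mathcal{M}_f)=\crit(f)$. Here we crucially use that $X$ is graded, which is part of the hypothesis and is exactly what Theorem~\ref{thm:given_matching_there_is_function} needs. Now apply Theorem~\ref{thm:given_matching_there_is_function} to the Morse matching $\mathcal{M}_f$ on the finite graded poset $X$: we obtain an order-preserving Morse function $f'\colon X\to\R$ with $\crit(f')=\crit(\mathcal{M}_f)=\crit(f)$.

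It remains to check that $f'$ satisfies the Exclusion condition and that the Morse matching associated with $f'$ is again $\mathcal{M}_f$. For the Exclusion condition: let $x\in X$ be a regular point of $f'$. Since $f'$ is order preserving, whenever $w\prec x$ we have $f'(w)\le f'(x)$, and whenever $x\prec y$ we have $f'(x)\le f'(y)$; so the only way a pair in the definition of a regular point can witness criticality-failure is through an equality $f'(w)=f'(x)$ with $w\prec x$, or $f'(x)=f'(y)$ with $x\prec y$. The construction in Theorem~\ref{thm:given_matching_there_is_function} assigns these equalities precisely along the edges of $\mathcal{M}_f$ (that is how the matched pairs fail to be critical), and since $\mathcal{M}_f$ is a matching each element of $X$ lies in at most one matched edge. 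Hence for a regular $x$, exactly one of the two alternatives of the Exclusion condition holds: either $x$ is the lower endpoint of its matched edge (giving alternative $(1)$) or its upper endpoint (giving alternative $(2)$), and not both. I would spell this out by re-reading the construction of $f'$ in \cite[Lemma 3.12]{Minian}: the values can be taken so that $f'$ is strictly increasing along every unmatched Hasse edge and constant (or controlled) along matched ones, which forces the Exclusion condition.

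Finally, for the statement that $f'$ has the same associated Morse matching as $f$: the Morse matching $\mathcal{M}_{f'}$ produced by Theorem~\ref{thm:morse_function_implies_morse_matching} consists of exactly those edges $(w,x)$ with $w\prec x$ and $f'(w)\ge f'(x)$; by the previous paragraph these are exactly the edges of $\mathcal{M}_f$. So $\mathcal{M}_{f'}=\mathcal{M}_f$, and in particular $f'$ has the same associated Morse matching as $f$, completing the proof.

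The main obstacle I anticipate is the middle step: verifying that the order-preserving function $f'$ coming out of Minian's integration lemma really does satisfy the Exclusion condition rather than merely being a Morse function with the right critical set. This hinges on the precise recipe for the values of $f'$ in \cite[Lemma 3.12]{Minian} — one needs that along each matched edge the function is (weakly) decreasing while along every other Hasse edge it is strictly increasing — so that the only ``regular'' behaviour is exactly the matched-edge behaviour. If the cited construction does not literally give strict increase on unmatched edges, a small perturbation argument (adding a generic strictly order-preserving function of small amplitude) would fix it without disturbing $\crit(f')$ or $\mathcal{M}_{f'}$.
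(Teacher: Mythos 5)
Your proposal is correct and is essentially the argument the paper intends: the corollary is stated as a direct consequence of combining Theorem \ref{thm:morse_function_implies_morse_matching} with Minian's integration result (Theorem \ref{thm:given_matching_there_is_function}), the point being exactly the one you verify, namely that the function produced there is order preserving with non-strict steps occurring only along matched edges, which forces the Exclusion condition and gives $\mathcal{M}_{f'}=\mathcal{M}_f$. One caveat: your fallback perturbation by a strictly order-preserving function would not be harmless (it would turn the matched-edge equalities into strict inequalities, making those points critical and changing the matching), but it is never needed, since the construction in \cite[Lemma 3.12]{Minian} already gives equality precisely on matched pairs and strict increase on all other Hasse edges.
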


 Therefore, we can establish a correspondence between Morse matchings and order preserving Morse functions satisfying the Exclusion condition on graded posets. However, the correspondence is not bijective since given a Morse function $f\co X \to \R$, the function $f'\co X \to \R$ given by $f'(x)=2f(x)$ is again Morse and both functions share the same associated matching.
%


\section{Fundamental Theorems} \label{sec:fundamental_theorems_and_consequences}

\subsection{First observations}

We introduce the following notation: given a finite poset $X$ and a  Morse function $f\co X\to \R$, for each $a\in \mathbb{R}$ we denote
$$X_a^f=\bigcup\limits_{f(x)\leq a}{U_x}.$$
Observe that, for each $a\in \mathbb{R}$, the subposet $X_a$ is an open subset of $X$.

We denote by $b_0(X)$ the number of connected components of $X$, which coincides with the number of path-connected components. Given a discrete Morse function on a simplicial complex $f\co K\to \R$, it holds that, as $a\in \R$ increases, new connected components of $K_a$  arise as critical vertices. The following result asserts this phenomenon for down-wide posets.

\begin{prop}\label{prop:path_components_critical_values}
	Let $X$ be a path-connected finite down-wide poset and let $f\colon X \to \mathbb{R}$ be an injective Morse function. Given $a,b\in \R$, $a<b$, if $b_0(X_a)<b_0(X_b)$, then there exists a critical value $c\in (a,b]$ such that $b_0(X_c)=b_0(X_a) +1$. Moreover, the critical value $c$ corresponds to a minimal element of $X$.
\end{prop}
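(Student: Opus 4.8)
The plan is to control how the component count $b_0(X_a)$ jumps as $a$ sweeps through the values of $f$. Since $f$ is injective, $X_a$ is constant between consecutive values of $f$ and changes only at the \emph{event values} $c=f(x)$, $x\in X$. For such a $c$ write $X_{<c}=\bigcup_{f(z)<c}U_z$ for the subposet reached just before $c$, so that $X_c=X_{<c}\cup U_x$; the set of newly added points $A_x:=U_x\setminus X_{<c}$ then consists only of points of $f$-value $\geq c$, since $y\in U_x$ with $f(y)<c$ would give $y\in U_y\subseteq X_{<c}$. I would establish two facts about the jump $\delta(c):=b_0(X_c)-b_0(X_{<c})$: first, that $\delta(c)\leq 1$ for every event value $c$; and second, that $\delta(c)=1$ forces $x=f^{-1}(c)$ to be a minimal element of $X$ that is, moreover, critical. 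Granting these, the proposition follows: the integer function $a\mapsto b_0(X_a)$ on $[a,b]$ changes only at the finitely many event values, each time by at most $1$, and it runs from $b_0(X_a)$ up to $b_0(X_b)>b_0(X_a)$; so the least event value $c\in(a,b]$ at which $b_0$ exceeds $b_0(X_a)$ (which exists since $b_0(X_b)>b_0(X_a)$) must satisfy $b_0(X_c)=b_0(X_a)+1$ with $\delta(c)=1$, because just before $c$ the value is still $\leq b_0(X_a)$ and the jump at $c$ is at most $1$. By the second fact, this $c$ is a critical value attained at a minimal element, as required.

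To prove the two facts I would distinguish two cases for $x=f^{-1}(c)$. If $x$ is minimal then $U_x=\{x\}$, so either $x\in X_{<c}$ and nothing is added (hence $\delta(c)=0$), or $x\notin X_{<c}$, in which case I claim $x$ is an isolated point of $X_c$: any $y\in X_c$ comparable to $x$ has $y>x$ since $x$ is minimal, and then $y\in X_c=X_{<c}\cup\{x\}$ with $y\neq x$ gives $y\in X_{<c}$, whence $x<y$ forces $x\in U_y\subseteq X_{<c}$, a contradiction. Thus $\delta(c)=1$, and the same contradiction shows no $y\succ x$ satisfies $f(y)<f(x)$; since $x$ has no lower covers, $x$ is critical. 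If $x$ is not minimal, this is where down-wideness enters: $\#\partial(x)\geq2$, whereas the Morse condition permits at most one $w\prec x$ with $f(w)\geq f(x)$, so some $w\prec x$ has $f(w)<c$, i.e.\ $w\in X_{<c}$. Every point of $A_x$ lies below $x$ in $X_c$, hence is joined to $x$ by a path in $X_c$ (in a finite space, comparable $p\leq q$ are joined by the path constant at $p$ on $[0,1)$ and equal to $q$ at $1$, which is continuous because open sets are down-sets), and $x$ is joined to $w\in X_{<c}$ in the same way; hence all of $A_x$ lies in one connected component of $X_c$, and that component already meets $X_{<c}$. Therefore $\delta(c)\leq0$ in this case, which proves the first fact and makes the second vacuous here.

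The genuinely delicate point is the non-minimal case: one must rule out that attaching $U_x\setminus X_{<c}$ spawns a brand-new component, and down-wideness is exactly what prevents this, by furnishing a lower cover of $x$ already present in $X_{<c}$. The remaining ingredients — that $X_a$ changes only at event values, that the newly added points have $f$-value at least $c$, and the elementary path joining comparable points of a finite space — are routine, so the full write-up should be brief once this case split is in place.
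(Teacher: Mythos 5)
Your proof is correct and follows essentially the same route as the paper's: sweep the sublevel sets through the (injective) values of $f$, use down-wideness together with the Morse condition to produce a lower cover already present in $X_{<c}$ when the new element is non-minimal (so no new component can appear), and observe that a regular minimal element is already contained in an earlier sublevel set via its pairing, so a genuine jump forces a minimal critical element. The only difference is organizational—you isolate the ``jump at most one'' statement as an explicit local lemma and spell out the path-connectivity of comparable points, which the paper asserts more tersely.
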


\begin{proof}
	Since $f$ is injective, then the number of path-components of $X_t$ can only increase by one when $t$ reaches a new regular or critical value. Denote by $c'$ the minimum value among the critical or regular values which are strictly greater than $a$ and such that $b_0(X_{c'})=b_0(X_a)+1$.  Let us denote the  element corresponding to the value $c'$ by $x$, i.e. $f(x)=c'$. We have to prove that $x$ is critical and that $x$ is minimal so $c=c'$ is the claimed value.
	
	Let us begin by proving that $x$ is minimal. Assume that $x$ is not minimal and we will arrive to a contradiction. Since we are adding a new path-component at $c'$, all the elements of $\partial x$ must satisfy that their values by $f$ are strictly greater than $f(x)=c'$. Moreover, by hypothesis, $\#\partial x\geq 2$, which is a contradiction with the fact that $f$ is a Morse function.
	
	Now, we prove that $c'$ must be a critical value. We argue by contradiction again. If $x$ is not a critical element, since $x$ is minimal, then there exists $y\succ x$ such that $f(y)<f(x)$. Therefore, $x\in X_{f(y)}$. Furthermore, applying that $\#\partial y\geq 2$ and the definition of Morse function to $y$, there exists $x'\prec y$ such that $f(y)>f(x')$. This is a contradiction with the fact that we are adding a new path-component when we reach $c'$.
\end{proof}

\begin{example}
	Consider the Morse function represented in Figure \ref{fig:ex:poset_prop:path_components_critical_values}. The value $3$ must correspond to a critical point since we are adding a new path-component ($b_0(X_3)=b_0(X_2)+1$). Moreover, the point corresponding to the value $3$ is of  zero height.
	\begin{figure}[htbp]
		\centering
		\includegraphics[scale=0.5]{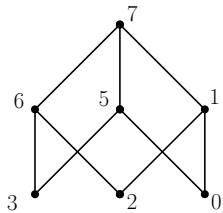}
		\caption{Morse function on a poset.}
		\label{fig:ex:poset_prop:path_components_critical_values}
	\end{figure}
\end{example}

With the following result we prove that the addition of regular elements does not create new connected components.

\begin{prop} \label{prop:regular_elements_do_not_create_path_components}
	Let $X$ be a path-connected  down-wide finite poset and let $f\colon X \to \mathbb{R}$ be an injective Morse function on it satisfying the Exclusion condition. Consider $a,b\in \R$, $a<b$.  If the interval $(a,b]$ does not contain critical values and only contains one regular value $f(y)=e$, then there exists $z\in X_a$ such that $z\prec y$ or $y\prec z$.
\end{prop}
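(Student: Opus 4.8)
The plan is to apply the Exclusion condition at the point $y$ — and in fact, since $f$ is injective and $f(y)=e$ is a regular value, $y$ is a regular point, so at least one of the two Exclusion alternatives holds automatically — and to extract from it a cover or lower cover of $y$ that already lies in $X_a$. Two elementary observations handle the bookkeeping. First, because $f$ is injective and the only value of $f$ in $(a,b]$ is $e=f(y)$, the element $y$ is the unique element of $X$ whose $f$-value lies in $(a,b]$; hence $X_b=X_a\cup U_y$, and what must be shown is precisely that $y$ is attached to $X_a$ inside $X_b$. Second, a remark used twice below: if $t$ is a value of $f$ with $t<e$, then $t\le a$, because $(a,e)\subseteq(a,b]$ and $(a,b]$ contains no value of $f$ other than $e$.

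So at least one of the following holds: (i) there is a (unique) $y'$ with $y\prec y'$ and $f(y')<f(y)=e$; or (ii) there is a (unique) $w$ with $w\prec y$ and $f(w)>f(y)=e$. In case (i) I set $z:=y'$; by the second remark $f(z)\le a$, so $z\in U_z\subseteq X_a$, and $y\prec z$, as required. This case is immediate.

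Case (ii) is the substance of the statement, and the only place where down-wideness is used; I expect it to be the main obstacle. The element $w$ itself is of no use, since $f(w)>e>a$ does not place $w$ in $X_a$. But $w\prec y$ shows $y$ is non-minimal, so down-wideness of $X$ provides a second lower cover $w'\in\partial(y)$ with $w'\ne w$. The second defining inequality of a Morse function at $y$ allows at most one lower cover of $y$ with $f$-value $\ge f(y)$, and that cover is $w$; hence $f(w')<f(y)=e$, whence $f(w')\le a$ by the second remark. Then $z:=w'$ lies in $X_a$ and satisfies $z\prec y$, completing the argument. It is exactly the breakdown of this step when $y$ has only one lower cover (of large $f$-value) that shows the down-wide hypothesis cannot be omitted.
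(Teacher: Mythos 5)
Your proof is correct, but it follows a genuinely different route from the paper's. The paper argues globally: it invokes Proposition \ref{prop:path_components_critical_values} to conclude that, in the absence of critical values in $(a,b]$, no new path-component can appear, so $y$ cannot lie in a new component of $X_b$, and then appeals to the fact that regular elements come in pairs. You instead argue purely locally at $y$: regularity of $y$ (guaranteed by injectivity, since $e$ is a regular value) yields either an upper cover $y'$ with $f(y')<e$ or a lower cover $w$ with $f(w)>e$; in the first case the value-gap observation (the only value of $f$ in $(a,b]$ is $e$) forces $f(y')\leq a$, and in the second case down-wideness produces a second lower cover $w'\neq w$, which by the Morse condition at $y$ must satisfy $f(w')<e$ and hence $f(w')\leq a$. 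This is a complete and self-contained argument; notably it does not use path-connectedness of $X$, does not rely on Proposition \ref{prop:path_components_critical_values}, and only uses the ``at least one alternative'' consequence of regularity rather than the exclusivity part of the Exclusion condition, so it is more elementary and, strictly speaking, proves the statement under slightly weaker hypotheses. What the paper's connectivity-based phrasing buys is a conceptual link to the component-counting picture developed just before (critical minima create components, regular values do not), whereas your local argument makes explicit exactly where down-wideness enters and why the conclusion fails without it.
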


\begin{proof}
	Due to Proposition \ref{prop:path_components_critical_values}, it follows that $b_0(X_a)\geq b_0(X_b)$, i.e, $y$ can not be in a different connected component.  Now the result follows from the fact that regular elements come in pairs.
\end{proof}

\begin{remark}
	Observe that the Exclusion Lemma (Lemma \ref{lma:exclusion:finite}) does not guarantee the conclusion of Proposition \ref{prop:regular_elements_do_not_create_path_components} by itself since we need to ensure that $y$ is not in a different connected component (as it happens with arbitrary posets, see Figure \ref{fig:counterexample_collapsing_theorem_regular_values} where $y$ is the element with the value $4$).
\end{remark}

The previous Propositions \ref{prop:path_components_critical_values} and \ref{prop:regular_elements_do_not_create_path_components}  may not hold for arbitrary posets, as the following example shows.

\begin{example}\label{ex:counterexample_collapsing_theorem_regular_values}
	Consider the Morse function  represented in Figure \ref{fig:counterexample_collapsing_theorem_regular_values}. The value $4$ is regular. However, $b_0(X_4)\neq b_0(X_3)$ while there are no critical values in $(3,4]$.
	\begin{figure}[htbp]
		\centering
		\includegraphics[scale=0.5]{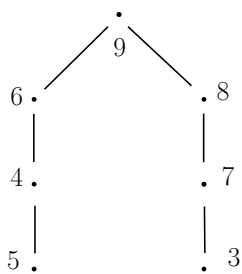}
		\caption{Regular values and path-components in general posets.}
		\label{fig:counterexample_collapsing_theorem_regular_values}
	\end{figure}
\end{example}

\subsection{Structural Theorems}

Both in smooth and discrete Morse Theory, manifolds and cell complexes can be recovered up to homotopy type from Morse functions defined on them by means of the so called fundamental theorems of Morse Theory. The next example shows that this is not possible in combinatorial Morse Theory defined on posets.

\begin{example}
	Consider the face poset of the simplicial complex depicted in Figure \ref{fig:counterexample_decomposition_homotopy_type}. It does not have the homotopy type of a point \cite[Example 5.1.12]{Barmak_book}. However, there is a Morse function defined on that poset with only one critical point, the Morse function associated to the matching drawn in the figure.
	\begin{figure}[htbp]
		\centering
		\includegraphics[width=4cm,height=3cm]{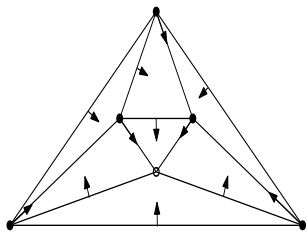}
		\caption{The Triangle.}
		\label{fig:counterexample_decomposition_homotopy_type}
	\end{figure}
\end{example}

This subsection is devoted to proving the substitutes of the fundamental theorems of Morse Theory in this context, that is, two collapsing theorems and an adjunction theorem. The first collapsing theorem is a homological collapsing theorem, which asserts that, in the absence of critical values, the homology remains unchanged provided the matching is homologically admissible. This result, combined with the adjunction theorem, is enough to prove the Morse inequalities. The second collapsing theorem guarantees that, in the absence of critical values, the weak homotopy type remains unchanged, provided that the matching is 1-admissible and homologically admissible. This result is analogous to  \cite[Theorem 3.3]{Forman2} in discrete Morse theory and plays the role of \cite[Theorem 3.1]{Milnor} in smooth Morse Theory. 

\begin{theorem}\label{thm:collapse_thm_homology} Let $X$ be a finite path-connected down-wide poset. Let $f\colon X \to \mathbb{R}$ be a Morse function satisfying the Exclusion condition.  Suppose that $(a,b]$ for $a<b$ contains no critical values of $f$. If  the Morse matching associated to the function $f$ is homologically admissible, then the inclusion $i \colon X_a \hookrightarrow X_b$ induces an isomorphism in homology.
\end{theorem}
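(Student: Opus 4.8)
The idea is to induct on the number of regular values in the interval $(a,b]$, reducing immediately to the case where $(a,b]$ contains exactly one regular value. Since regular points of a Morse function satisfying the Exclusion condition come in pairs $(w,x)$ with $w \prec x$ (one such $w$ below $x$, or equivalently one such $x$ above $w$), the single regular value $e$ in $(a,b]$ actually corresponds to \emph{two} elements of $X$, say $y$ and $y'$ with $y \prec y'$, forming a matched pair. Thus $X_b = X_a \cup U_{y'}$, and $U_{y'} \cap X_a = U_{y'} - \{y,y'\}$ since $U_{y'} = U_y \cup \{y'\}$ and both $y,y'$ have $f$-value $e > a$ (note $y' \in U_{y'}$ and $y \prec y'$ forces $y \in U_{y'}$; every other element $z < y'$ satisfies $z \prec y'$ or $z \prec y$ by... well, one must check $f(z) < e$, which follows from the Morse and Exclusion conditions since $z < y'$ and $z \ne y$). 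So I would first nail down this local description: $X_b = X_a \cup U_{y'}$ with $X_a \cap U_{y'} = \widehat{U}_{y'} - \{y\}$.

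**Key steps.** (1) Reduce to one regular value by composing inclusions. (2) Identify the matched pair $(y,y')$ at value $e$ and establish $X_b = X_a \cup U_{y'}$, $X_a \cap U_{y'} = \widehat{U}_{y'} - \{y\}$. Here I use Proposition~\ref{prop:regular_elements_do_not_create_path_components} to guarantee $y'$ (equivalently the pair) is genuinely glued onto $X_a$ rather than sitting in a separate component — this is what rules out the pathology of Example~\ref{ex:counterexample_collapsing_theorem_regular_values}. (3) Apply Mayer–Vietoris to the open cover $\{X_a, U_{y'}\}$ of $X_b$ (both are open subposets, so this is legitimate for singular homology of finite spaces). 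Since $U_{y'}$ has a maximum $y'$, it is contractible, hence acyclic. The reduced Mayer–Vietoris sequence then gives $\widetilde H_*(X_b) \cong \widetilde H_*(X_a) \oplus \widetilde H_{*-?}(\text{something})$... more precisely it reduces the claim "$i_* $ is an isomorphism" to the statement that $X_a \cap U_{y'} = \widehat{U}_{y'} - \{y\}$ is acyclic. (4) But $(y,y')$ is an edge of the Morse matching $\mathcal{M}_f$, and the matching is homologically admissible by hypothesis; by definition this means precisely that $\widehat{U}_{y'} - \{y\}$ is acyclic. That closes the Mayer–Vietoris argument and shows $i\colon X_a \hookrightarrow X_b$ induces an isomorphism on homology.

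**Main obstacle.** The delicate point is step (2): verifying that the single regular value $e$ corresponds to exactly a matched pair glued onto $X_a$ in the expected way, and in particular that $X_a \cap U_{y'}$ is exactly $\widehat{U}_{y'} - \{y\}$ and not something smaller. One must use that $f$ is a Morse function (to control which elements below $y'$ have value $\le a$), the Exclusion condition (to get that $y$ is the unique element of $\partial y'$ with value $\ge e$, and that $y'$ is the unique element above $y$ with value $\le e$, so no third element of the fiber over $e$ interferes), and Proposition~\ref{prop:regular_elements_do_not_create_path_components} together with down-wideness (to ensure connectivity, i.e. that we are not in the excluded pathological situation). Once the local picture is pinned down, the homological conclusion is a formal Mayer–Vietoris computation feeding on the definition of homologically admissible matching, so I expect no further difficulty there. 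One should also double-check that passing through a regular value never decreases $b_0$ and never increases it when glued correctly, which is exactly the content of Propositions~\ref{prop:path_components_critical_values} and~\ref{prop:regular_elements_do_not_create_path_components}; these are the ingredients that make the down-wide (rather than merely two-wide) hypothesis necessary.
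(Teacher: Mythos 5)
Your skeleton is essentially the paper's: reduce to an interval containing a single regular value, show that passing it amounts to adjoining the matched pair, and then conclude by a formal homological argument (your Mayer--Vietoris with $U_{y'}$ contractible and the intersection acyclic is interchangeable with the paper's excision--plus--long-exact-sequence argument in Proposition \ref{prop:trick3}). The genuine problem sits exactly at the step you flag as delicate, step (2), and the justifications you offer there do not hold. First, $U_{y'}=U_y\cup\{y'\}$ is never true here: it would make $y$ the maximum of $\widehat{U}_{y'}$, i.e.\ $y'$ a down beat point, which down-wideness ($\#\partial(y')\geq 2$) forbids; also, after the injectivity reduction the matched partner does not share the value $e$ (its value lies outside $(a,b]$), and you omit the case where the partner lies \emph{above} the element at value $e$, in which nothing is added at all. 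More seriously, the claim that every $z<y'$ with $z\neq y$ satisfies $f(z)<e$ ``by the Morse and Exclusion conditions'' is only valid for covers $z\prec y'$; the Morse condition controls nothing further down, $f$ is not assumed order preserving, and Proposition \ref{prop:regular_elements_do_not_create_path_components} only rules out a new connected component, not extra new elements. Elements lying below $y$ can carry values larger than $b$ and still enter the sublevel filtration only at the value $e$, so $X_b-X_a$ need not be just the pair and $X_a\cap U_{y'}$ need not be $\widehat{U}_{y'}-\{y\}$.

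Concretely, take $X=\{z,z',u,w,v\}$ with covering relations $z\prec w$, $z'\prec w$, $u\prec v$, $w\prec v$ (down-wide, path-connected), and $f(z')=-2$, $f(u)=-1$, $f(v)=5$, $f(z)=15$, $f(w)=20$, with $a=0$, $b=10$. This $f$ is Morse and satisfies the Exclusion condition; its critical points are $z,z',u$ with values $15,-2,-1\notin(0,10]$, and the only value in $(0,10]$ is the regular value $f(v)=5$. Nevertheless $X_0=\{z',u\}$ while $X_{10}=X$, so $X_{10}-X_0=\{v,w,z\}$ and $X_0\cap U_v=\{z',u\}\subsetneq\widehat{U}_v-\{w\}=\{z,z',u\}$: the local picture asserted in your step (2) is not a consequence of the hypotheses you invoke for it. (In this example $\widehat{U}_v-\{w\}$ is a three-point antichain, hence the matching $\{(w,v)\}$ is not homologically admissible, so the theorem itself is not contradicted; but your derivation of step (2) makes no use of admissibility, so the gap is real, not cosmetic.) To close it you must make the admissibility hypothesis, or a reduction to an order preserving Morse function (for which $X_t=f^{-1}((-\infty,t])$ and the local description is immediate), enter step (2) itself; note that invoking Corollary \ref{coro:exclusive_morse_function_implies_exclusive_order_preserving_one} replaces $f$ and hence changes the filtration $X_t$, so that reduction also requires an argument. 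For comparison, the corresponding step in the paper is Proposition \ref{prop:trick}, whose proof concentrates on the covers of $w$ and likewise does not discuss elements strictly below $w$; this is precisely the point of the whole argument that demands the most care, and your proposal does not yet supply it.
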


In order to prove Theorem \ref{thm:collapse_thm_homology}  we begin with the following easy lemma which allow us to perturb the function locally so it can be taken to be injective.

\begin{lemma}\label{lema:injectivity_for_granted}
Let $X$ be a finite path-connected  down-wide  poset. Let $f\colon X \to \mathbb{R}$ be a Morse function satisfying the Exclusion condition.  Suppose that $(a,b]$ for $a<b$ contains no critical values of $f$ and contains at most one regular value $f(v)=c<b$.  Then there is a Morse function $g\colon X \to \mathbb{R}$ verifying: \begin{enumerate}
\item $X_t^g=X_t^f$ for every $t<c$. 
\item The restriction of $g$ to $(a,c]$ is injective.
\end{enumerate}
\end{lemma}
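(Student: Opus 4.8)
The plan is to reduce the statement to spreading out a single fibre of $f$, and then to perturb $f$ only on that fibre. First I would observe that the hypotheses force $f^{-1}((a,c))=\emptyset$: if $f(x)\in(a,b]$ then $x$ cannot be critical (otherwise $f(x)$ would be a critical value in $(a,b]$), so $x$ is regular and $f(x)$ is a regular value in $(a,b]$, whence $f(x)=c$. Therefore $f^{-1}((a,c])=f^{-1}(\{c\})=:S$, and the only obstruction to the injectivity of $f$ on $(a,c]$ is that all of $S$ is sent to $c$. So it suffices to produce a Morse function $g$ which agrees with $f$ off $S$, sends $S$ to pairwise distinct values close to $c$, and leaves the sublevel sets below $c$ untouched (and, as is needed for the application in Theorem \ref{thm:collapse_thm_homology}, still satisfies the Exclusion condition and has the same associated Morse matching).

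For the perturbation I would set $g=f$ on $X\setminus S$, choose a small $\eta>0$, list $S=\{x_1,\dots,x_m\}$ as a linear extension of the order on $S$ opposite to the one inherited from $X$ (so that $x_i\prec x_j$ implies $j<i$), and put $g(x_i)=c+(i-1)\eta$. Then the new values all lie in $[c,c+\epsilon)$ with $\epsilon:=(m-1)\eta$, exactly one element of $S$ keeps the value $c$, and for every matched pair $(x,y)\in\mathcal{M}_f$ with $x\prec y$ and $x,y\in S$ we obtain $g(x)>g(y)$, so that matching edge is not destroyed.

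Conditions $(1)$ and $(2)$ are then formal. For $(1)$, if $t<c$ the sets $\{w:f(w)\le t\}$ and $\{w:g(w)\le t\}$ coincide, since they can only differ on $S$, where both functions take values $\ge c>t$; hence $X_t^g=X_t^f$. For $(2)$, $g^{-1}((a,c])$ meets $S$ in at most the one element kept at $c$ and meets $X\setminus S$ in $f^{-1}((a,c])\setminus S=\emptyset$, so the restriction of $g$ to $(a,c]$ is injective.

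The substantive point, and the one I expect to be the main obstacle, is to verify that $g$ is still a Morse function satisfying the Exclusion condition with the same critical set (indeed the same associated matching). If $\eta$ is chosen smaller than every gap between distinct values of $f$, then the two defining inequalities of a Morse function and the criticality status are unchanged at any point all of whose covers and cocovers are displaced by less than $\eta$; thus only the elements of $S$ and their matched partners require attention, and every element of $S$ is matched since $(a,b]$ contains no critical values. For these, the Exclusion condition for $f$ is used decisively: it forces $S$ to contain no chain of length $\ge 2$ and gives each $x\in S$ exactly one relation, realised either by its unique comparable partner inside $S$ — which the opposite-order listing of $S$ keeps correctly oriented — or by a partner outside $S$ whose value is fixed and bounded away from $c$ by more than $\eta$. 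In both cases that relation, and only it, persists for $g$ (and likewise at the partner), so $\crit(g)=\crit(f)$ and $\mathcal{M}_g=\mathcal{M}_f$. For a Morse function not satisfying the Exclusion condition this orientation step can fail, which is precisely why that hypothesis is imposed.
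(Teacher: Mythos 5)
Your construction is correct and is exactly the kind of local perturbation the paper has in mind: the paper states this as an ``easy lemma'' and gives no proof at all, so there is nothing to diverge from. Your reduction is right: since $(a,b]$ contains no critical values and at most the one regular value $c$, every point with value in $(a,b]$ lies in the fibre $S=f^{-1}(c)$ and is regular (hence matched, by the Exclusion condition); spreading $S$ over $c, c+\eta,\dots$ along a linear extension of the order opposite to the one induced from $X$ keeps the relation $g(x)>g(y)$ for every matched pair $x\prec y$ inside $S$, creates no new relations anywhere (a strict inequality of $f$-values persists for $g$ when the perturbation is small), and therefore leaves the Morse property, the Exclusion condition, the critical set and the matching untouched, while (1) and (2) are immediate. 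One small point to state precisely: the displacements accumulate to $\epsilon=(m-1)\eta$, so the smallness requirement must be that $\epsilon$ (not merely $\eta$) is less than every gap between distinct values of $f$ and less than $b-c$; otherwise an element far down your list could be pushed past the value of a matched partner outside $S$ (your case of a partner $z\prec x$ with $f(z)>c$) and a relation of the matching could be destroyed. With that quantifier corrected, the proof is complete.
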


As a consequence of Lemma \ref{lema:injectivity_for_granted}, in the next proofs we will assume the injectivity of certain functions.

\begin{prop}\label{prop:trick}
	Let $X$ be a finite path-connected  down-wide poset. Let $f\colon X \to \mathbb{R}$ be a Morse function satisfying the Exclusion condition. Suppose that $(a,b]$ for $a<b$ contains no critical values of $f$ and contains at most one regular value $f(v)$.  Then $X_b= X_a$ or $X_b-X_a=\{v,w\}$, where $w\prec v$ and $w$ is an up beat point in $X_b$.
\end{prop}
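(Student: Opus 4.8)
The plan is to analyze what happens to $X_t^f$ as $t$ increases through the interval $(a,b]$. By Lemma~\ref{lema:injectivity_for_granted} we may assume $f$ is injective on $(a,b]$, so that as $t$ crosses a value of $f$, exactly one element $u$ of $X$ gets added to $X_t$ (together with all of $U_u$, but since $X_t$ is already open and downward closed, the only genuinely new elements are those $u$ with $f(u)\in(a,b]$). First I would dispense with the case $X_b = X_a$: this happens precisely when no element has its $f$-value in $(a,b]$, i.e. there is no regular value $f(v)$ in the interval. So assume there is a (unique) regular value $f(v)\in(a,b]$.

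Since $v$ is regular and $f$ satisfies the Exclusion condition, exactly one of two alternatives holds: either (1) there is a unique $y\succ v$ with $f(v)\geq f(y)$, or (2) there is a unique $w\prec v$ with $f(w)\geq f(v)$. In case (1), since $f$ is injective and $y\succ v$ with $f(y)< f(v)\le b$, we would have $f(y)\in(a,b]$ as well — but then $y$ is a second element with value in the interval, contradicting that $f(v)$ is the only regular value there unless $y$ is critical; and $y$ critical contradicts $v\prec y$, $f(v)\geq f(y)$ (critical points have no such lower neighbour below their value). Actually the cleaner route: case (1) would force a second element into $X_b - X_a$ whose value lies in $(a,b]$, and the only candidate for "the unique regular value" is $f(v)$; one checks $y$ cannot be critical, so case (1) cannot occur. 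Hence case (2) holds: there is a unique $w\prec v$ with $f(w)\geq f(v)$.

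Now I claim $X_b - X_a = \{v,w\}$. By injectivity and $f(w)\ge f(v)$, in fact $f(w) > f(v)$, so $f(w)\in(f(v),b]\subseteq(a,b]$; thus both $v$ and $w$ are added. That $w\prec v$ is exactly the relation from case (2). It remains to see nothing else is added and that $w$ is an up beat point in $X_b$. For the former: any other element $u$ added would have $f(u)\in(a,b]$ and be a third element there; since $f(v)$ is the only regular value in the interval, $u$ would have to be critical, and — here I would invoke Proposition~\ref{prop:path_components_critical_values} and the down-wide hypothesis to rule out a critical element appearing, since a critical element in $(a,b]$ either is minimal (creating a new component, excluded by the no-critical-value assumption on this homological statement's context) or forces structure incompatible with $v$ being the sole regular value. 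The main obstacle, and the step I would spend the most care on, is precisely this bookkeeping: showing that "exactly one regular value in $(a,b]$" together with "no critical values in $(a,b]$" and down-wideness forces $X_b - X_a$ to have cardinality exactly $2$, as opposed to containing extra elements sneaking in. Finally, $w$ is an up beat point in $X_b$: we must show $\widehat{F}_w^{X_b}$ has a minimum, namely $v$. Any $z\in X_b$ with $z\succ w$ satisfies: either $z = v$, or $z\in X_a$ — but if $z\in X_a\subseteq X_b$ with $w\prec z$, then since $w\notin X_a$... wait, $z > w \geq v$ in value but $w\notin X_a$ means... Let me instead argue: if $w \prec z$ and $z\in X_b$, then by the Morse condition at $w$ applied upward (at most one upper neighbour $y'$ of $w$ has $f(y')\le f(w)$), combined with $v\prec w$... hmm, $v < w$. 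I would show every $z$ covering $w$ in $X_b$ has $z\ge v$, using that $v$ is the unique element of $X_b-X_a$ of smallest value below $w$ and that $w$ itself was only "reached" via $v$; more concretely, $U_w^{X_b}$ adds only $v$ and $w$ to $U_w^{X_a}\cap(\text{stuff})$, and one deduces the unique minimal element of $\widehat F_w \cap X_b$ is $v$. This last verification is routine once the structure $X_b - X_a = \{v,w\}$ with $w\prec v$ is in hand.
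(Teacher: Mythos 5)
Your central bookkeeping premise is false, and the errors propagate. Since $X_t=\bigcup_{f(x)\le t}U_x$, when $t$ passes $f(v)$ the whole of $U_v$ enters, so points of $\widehat{U}_v$ with arbitrarily large $f$-values can appear in $X_b-X_a$; it is not true that the only genuinely new elements are those whose $f$-value lies in $(a,b]$. In fact the point $w$ of the statement enters $X_b$ in exactly this way: $w\prec v$ with $f(w)\geq f(v)$ makes $w$ a regular point, so $f(w)$ is a regular value distinct from $f(v)$, hence $f(w)\notin(a,b]$, and since $f(w)>f(v)>a$ this forces $f(w)>b$. Your assertion that $f(w)\in(f(v),b]\subseteq(a,b]$ therefore contradicts the hypothesis of at most one regular value in the interval; $w$ lies in $X_b$ only because $w<v$, not because its value is below $b$. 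The same faulty premise underlies your claim that any further added element $u$ would have $f(u)\in(a,b]$, so your argument that $X_b-X_a$ has exactly two elements does not stand.

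Your case analysis is also wrong: alternative (1) of the Exclusion condition (a unique $y\succ v$ with $f(y)\leq f(v)$) cannot be ruled out. In that case $y$ is regular, so $f(y)$ is a regular value different from $f(v)$, hence $f(y)\notin(a,b]$; combined with $f(y)<f(v)\leq b$ this gives $f(y)\leq a$, so $v\in U_y\subseteq X_a$ and one lands in the alternative $X_b=X_a$ — this is exactly how the paper treats it, whereas you discard the case by the incorrect inference that $f(y)$ would have to lie in $(a,b]$, and you tie $X_b=X_a$ only to the absence of any value in the interval. Finally, the up-beat-point property of $w$ is not ``routine once $X_b-X_a=\{v,w\}$ is in hand'': it is the crux, and your sketch visibly stalls there (even reversing the relation to $v\prec w$ at one point). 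The paper's argument uses the Morse condition at $w$: since $v$ is the one cover of $w$ with $f(v)\leq f(w)$, every other cover $u$ of $w$ satisfies $f(u)>f(w)>b$, and then, because no value other than $f(v)$ lies in $(a,b]$, such a $u$ can belong to $X_b$ only via some $r>u$ with $f(r)\leq a$, i.e.\ only by already lying in $X_a$. This step, which is what actually controls $\widehat{F}_w\cap X_b$, is missing from your proposal, so the proof as written does not go through.
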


\begin{proof} Assume that $f$ is injective and that furthermore, $(a,b]$ contains only one regular value $c=f(v)$, then $a<c<b$. Since $v$ is a regular point and $f$ satisfies the Exclusion condition, Proposition \ref{prop:regular_elements_do_not_create_path_components} implies that we only need to consider the following two mutually exclusive cases:
	\begin{enumerate}
		\item First, suppose that there exists $w\succ v$ such that $f(w)<f(v)$.  Now $f(w)\leq a$ since $f(v)$ is the unique regular value in $(a,b]$ and $f(w)<f(v)<b$. Therefore, $X_b=X_a$.
		\item For the second case, there exists $w\prec v$ with $f(w)>f(v)$. We claim that 	$v$ is the unique point in $X_b$ such that $w\prec v$ and $v\notin X_a$. That is, $X_b= X_a \sqcup \{v,w\}$ where $w\prec v$ and $w$ is an up beat point. In order to prove the claim, suppose there exists $u\neq v$, such that $w\prec u$.
		
		\begin{claim*}
			Under the above conditions, If $u\in X_b$, then $u\in X_a$.
		\end{claim*}
		\begin{proof}[Proof of the Claim]
			\begin{enumerate}
				\item 	First, since $f(v)<f(w)$ and $v$ is the unique regular element in $(a,b]$, we must have $b<f(w)<f(u)$ by the definition of a Morse function.
				\item Second, recall that $u \in X_b$ iff there exists $r\in X$, $u \leq r$ such that $f(r)\leq b$.
			\end{enumerate}
			Combining (a) and (b) it follows that if $u\in X_b$, then there exists $r\in X$, $r \neq u$, $u \leq r$ such that $f(r)\leq b$. Since $f(v)$ is the unique regular value in $(a,b]$ and $(a,b]$ contains no critical values of $f$, it follows that $f(r)\leq a$.
		\end{proof}
		So, suppose there exists $u\neq v$, such that $w\prec u$. By the Claim there are two cases to consider:
		\begin{enumerate}
			\item[1] $u \in X_b$. Then $u \in X_a$ and $w$ is a (up) beat point.
			\item[2] $u \notin X_b$ and, again, $w$ is a (up) beat point. \qedhere
		\end{enumerate}
	\end{enumerate} 
\end{proof}

\begin{remark}
	Theorem \ref{thm:collapse_thm} does not necessarily hold for arbitrary posets, as Example \ref{ex:counterexample_collapsing_theorem_regular_values} shows.
\end{remark}

\begin{prop}\label{prop:trick3}
	Under the conditions of Proposition \ref{prop:trick}, the inclusion $i \colon X_a \hookrightarrow X_b$ induces an isomorphism in all homology groups if and only if the Morse matching associated to the function $f$ is homologically admissible.
\end{prop}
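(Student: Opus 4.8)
The plan is to translate the structural dichotomy of Proposition~\ref{prop:trick} into a computation of the relative homology $H_\ast(X_b,X_a)$ and to recognize the resulting obstruction as the homological admissibility of the single matched edge introduced when one crosses the slab $(a,b]$.

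First I would dispose of the trivial alternative. As in the proof of Proposition~\ref{prop:trick} we may assume, by Lemma~\ref{lema:injectivity_for_granted}, that $f$ is injective on $(a,b]$; Proposition~\ref{prop:trick} then gives either $X_b=X_a$ --- in which case $i$ is the identity, no matched edge is introduced, and there is nothing to prove --- or $X_b=X_a\sqcup\{v,w\}$ with $w\prec v$. In the latter case I would pin down the matching: since $w\in X_b$ but $w\notin X_a$, the only element above $w$ with $f$-value $\le b$ is $v$ (the sole regular value in $(a,b]$ is $f(v)$ and $(a,b]$ carries no critical values), from which a short check with the Exclusion condition at the regular point $v$ gives $f(w)>f(v)$ and shows that the matching $\mathcal{M}_f$ provided by Theorem~\ref{thm:morse_function_implies_morse_matching} pairs $v$ with $w$. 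As $(w,v)$ is the only edge of $\mathcal{M}_f$ with a vertex in $X_b\setminus X_a=\{v,w\}$, it is exactly the matched edge introduced along the slab $(a,b]$, and ``$\mathcal{M}_f$ is homologically admissible'' here means precisely ``$\widehat{U}_v-\{w\}$ is acyclic''.

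Next I would build the local model inside $X_b$. Since $v\in X_b$, its minimal open set $U_v=\{z\in X\colon z\le v\}$ is contained in $X_b$, hence open in $X_b$, and it is contractible because $v$ is its maximum; also $X_a$ is open in $X_b$. From $X_b=X_a\sqcup\{v,w\}$ and $w\prec v$ one reads off $X_a\cup U_v=X_b$ and $X_a\cap U_v=\widehat{U}_v-\{w\}$, the latter nonempty because $\#\partial(v)\ge 2$ by down-wideness. Excision for the pair $(X_b,X_a)$ with the open cover $\{X_a,U_v\}$ (equivalently, excising the closed subset $X_b\setminus U_v\subseteq X_a$) then gives
\[
H_n(X_b,X_a)\;\cong\;H_n\bigl(U_v,\,\widehat{U}_v-\{w\}\bigr),
\]
and the long exact sequence of the pair $(U_v,\widehat{U}_v-\{w\})$, using contractibility of $U_v$ and nonemptiness of $\widehat{U}_v-\{w\}$, yields $H_n(X_b,X_a)\cong\widetilde{H}_{n-1}(\widehat{U}_v-\{w\})$ in every degree. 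Feeding this into the long exact sequence of the pair $(X_b,X_a)$ shows that $i$ induces an isomorphism on all homology groups if and only if $H_n(X_b,X_a)=0$ for every $n$, i.e.\ if and only if $\widehat{U}_v-\{w\}$ is acyclic, i.e.\ if and only if the edge $(w,v)$ of $\mathcal{M}_f$ is homologically admissible; this is the asserted equivalence.

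I expect the main obstacle to be the excision isomorphism, since we are working with non-Hausdorff finite spaces: one must be certain that $H_\ast(X_b,X_a)\cong H_\ast(U_v,\,U_v\cap X_a)$ genuinely holds. It does, precisely because $X_a$ and $U_v$ are honest open subsets of $X_b$, so it follows either from the Mayer--Vietoris sequence of the open cover $X_b=X_a\cup U_v$ or from the classical excision theorem applied to the closed subset $X_b\setminus U_v$, which is contained in the open set $X_a$. The other point to watch is the low-degree bookkeeping (reduced versus unreduced homology at $H_0$), which is why I would keep explicit the fact, guaranteed by down-wideness, that $\widehat{U}_v-\{w\}$ is nonempty.
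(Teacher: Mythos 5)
Your argument is correct and follows essentially the same route as the paper: the long exact sequence of the pair $(X_b,X_a)$, excision with the open cover $\{X_a,U_v\}$ to identify $H_*(X_b,X_a)\cong H_*(U_v,\widehat{U}_v-\{w\})$, and contractibility of $U_v$ to reduce everything to the acyclicity of $\widehat{U}_v-\{w\}$, i.e.\ the homological admissibility of the matched edge $(w,v)$. Your additional care with the trivial case $X_b=X_a$, the identification $X_a\cap U_v=\widehat{U}_v-\{w\}$, and the nonemptiness coming from down-wideness only makes the paper's terse argument more explicit.
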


\begin{proof}
By applying the Long Exact Sequence of homology to the pair $(X_b,X_a)$ it follows that $i \colon X_a \hookrightarrow X_b$ induces an isomorphism in all homology groups if and only if $H_*(X_b,X_a)\cong 0$.  As a consequence of Excision Theorem \cite[Theorem 2.20]{Hatcher},  given two open sets $A$ and $B$ which cover $X_b$, then there is an isomorphism $H_*(B,A\cap B)\cong H_*(X_b,A)$. Considering $A=X_a$ and $B=U_v$, it follows that
 $$H_*(U_v,\widehat{U}_v-\{w\})\cong H_*(X_b,X_a).$$
 Since $w\prec v$ is an element in the matching and the matching is homologically admissible, then $\widehat{U}_{v}-\{w\}$ is acyclic. By applying the Long Exact Sequence of homology to the pair $(U_v,\widehat{U}_v-\{w\})$ and using the fact that $U_v$ is contractible, it follows that $H_*(U_v,\widehat{U}_v-\{w\})\cong H_*(\widehat{U}_{v}-\{w\})$, so $H_*(U_v,\widehat{U}_v-\{w\})\cong 0$ if and only if the element of the matching $w\prec v$  is homologically admissible.
\end{proof}


\begin{proof}[Proof of Theorem \ref{thm:collapse_thm_homology}]
	It follows by combining Propositions \ref{prop:removing_beat_point_homot_equiv}, \ref{prop:trick} and \ref{prop:trick3}.
\end{proof}

Now we state the weak homotopical collapsing theorem. We need to add the extra hypothesis that the Morse matching associated to the function $f$  is $1$-admissible.

\begin{theorem}\label{thm:collapse_thm} Let $X$ be a finite path-connected down-wide poset. Let $f\colon X \to \mathbb{R}$ be a discrete Morse function satisfying the Exclusion condition.  Assume that $(a,b]$ for $a<b$ contains no critical values of $f$.
	\begin{enumerate}
		\item If the Morse matching associated to the function $f$ is $1$-admissible and homologically admissible, then the inclusion $i \colon X_a \hookrightarrow X_b$ is a weak homotopy equivalence.
		\item Moreover, in case $(a,b]$ contains no critical values of $f$ and contains at most one regular value $f(v)$, then $X_b= X_a$ or $X_b-X_a=\{v,w\}$ where $w\prec v$ and $w$ is an up beat point in $X_b$ and $v$ is a $\gamma-$point in $X_b-\{w\}$.
	\end{enumerate}
\end{theorem}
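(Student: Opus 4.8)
The plan is to derive both statements from Proposition~\ref{prop:trick} together with the elementary theory of beat points and $\gamma$-points: I will prove part (2) first and then use it to obtain part (1). By Lemma~\ref{lema:injectivity_for_granted} and the convention adopted immediately after it, I will assume throughout that the Morse function under consideration is injective on the interval being examined; in the situation of part (2) this makes $v$ the unique element of $X$ with value in $(a,b]$.

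\emph{Part (2).} Proposition~\ref{prop:trick} already supplies the dichotomy: either $X_b=X_a$, and there is nothing to prove, or $X_b-X_a=\{v,w\}$ with $w\prec v$ and $w$ an up beat point of $X_b$. The first substantive step is to check that $v$ is \emph{maximal} in $X_b$. Indeed, if $v<u$ for some $u\in X_b$, pick a cover $v\prec u'\le u$ and an element $r\ge u'$ with $f(r)\le b$; since $r>v$, injectivity gives $f(r)\ne f(v)$, and as $(a,b]$ contains no critical values and no regular value other than $f(v)$, we must have $f(r)\le a$, so that $v\le r$ would lie in the open down-set $X_a$, contradicting $v\notin X_a$. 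Hence $F_v=\{v\}$ in $X_b$, and writing $Y:=X_b-\{w\}$ one computes directly
\begin{equation*}
	\widehat{C}^{\,Y}_v=\bigl(U^{\,Y}_v\cup F^{\,Y}_v\bigr)-\{v\}=\bigl(U_v-\{w\}\bigr)-\{v\}=\widehat{U}_v-\{w\},
\end{equation*}
where the sets $U_v,F_v,\widehat U_v$ are taken inside $X_b$. Since the Morse matching of $f$ is $1$-admissible and homologically admissible, the matched edge $(w,v)$ is admissible (recall that an edge is admissible if and only if it is $1$-admissible and homologically admissible, by the Hurewicz Theorem), that is, $\widehat{U}_v-\{w\}$ is homotopically trivial. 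Therefore $\widehat{C}^{\,Y}_v$ is homotopically trivial, i.e.\ $v$ is a $\gamma$-point of $X_b-\{w\}$, which proves (2).

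\emph{Part (1).} Choose $a=t_0<t_1<\dots<t_n=b$ so that each interval $(t_{i-1},t_i]$ contains at most one regular value of $f$. Since weak homotopy equivalences compose, it suffices to prove that each inclusion $X_{t_{i-1}}\hookrightarrow X_{t_i}$ is a weak homotopy equivalence; applying Lemma~\ref{lema:injectivity_for_granted} on $(t_{i-1},t_i]$ and subdividing once more, we may assume that $f$ is injective on $(a,b]$ and that $(a,b]$ contains no critical values and at most one (necessarily regular) element $v$. By part (2), either $X_b=X_a$, in which case $i$ is the identity, or $X_b-X_a=\{v,w\}$ with $w$ an up beat point of $X_b$ and $v$ a $\gamma$-point of $X_b-\{w\}$. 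In the latter case the inclusion $X_b-\{w\}\hookrightarrow X_b$ is a homotopy equivalence by Proposition~\ref{prop:removing_beat_point_homot_equiv}, and the inclusion $X_a=(X_b-\{w\})-\{v\}\hookrightarrow X_b-\{w\}$ is a weak homotopy equivalence because removing a $\gamma$-point induces a weak homotopy equivalence (\cite{Barmak_Minian}, \cite[\S6.2]{Barmak_book}). Composing these two inclusions shows that $i\colon X_a\hookrightarrow X_b$ is a weak homotopy equivalence.

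The step I expect to be the main obstacle is the identification $\widehat{C}^{\,X_b-\{w\}}_v=\widehat{U}_v-\{w\}$, and in particular the maximality of $v$ in $X_b$ underlying it: this is exactly the place where the absence of critical values and of additional regular values in $(a,b]$, together with the injectivity granted by Lemma~\ref{lema:injectivity_for_granted}, is genuinely used. Once that identification is in hand, the admissibility hypotheses translate verbatim into the $\gamma$-point condition, and part (1) becomes a routine gluing of Proposition~\ref{prop:removing_beat_point_homot_equiv} with the standard $\gamma$-point removal theorem for finite spaces.
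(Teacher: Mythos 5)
Your argument is correct and follows essentially the same route as the paper: the dichotomy of Proposition \ref{prop:trick}, removal of the up beat point $w$ via Proposition \ref{prop:removing_beat_point_homot_equiv}, and the admissibility-plus-Hurewicz argument showing $\widehat{U}_v-\{w\}$ is homotopically trivial. The only minor differences are that where the paper's Proposition \ref{prop:trick2} verifies the weak equivalence $(X_b-\{w\})-\{v\}\hookrightarrow X_b-\{w\}$ directly with McCord's Theorem, you instead establish that $v$ is a $\gamma$-point (usefully making the maximality of $v$ in $X_b$ explicit) and invoke the one-point reduction theorem of \cite{Barmak_Minian}, and that you spell out the subdivision of $(a,b]$ into single-value steps, which the paper leaves implicit.
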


\begin{prop}\label{prop:trick2}
	Under the conditions of Proposition \ref{prop:trick}, the inclusion $i \colon (X_b-\{w\})-\{v\} \hookrightarrow X_b-\{w\}$ is a weak homotopy equivalence if and only if the element $w\prec v$ of the Morse matching associated to the function $f$ is $1$-admissible and homologically admissible. Moreover $v$ is a $\gamma-$point in $X_b-\{w\}$.
\end{prop}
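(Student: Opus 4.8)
The plan is to reduce everything to the $\gamma$-point dictionary by first pinning down the position of $v$ inside $X_b-\{w\}$. By Proposition~\ref{prop:trick} we may restrict to the nontrivial case, so that $X_b-\{w\}=X_a\sqcup\{v\}$ with $w\prec v$ and $w$ an up beat point of $X_b$; in particular $(X_b-\{w\})-\{v\}=X_a$. First I would show that $v$ is a \emph{maximal} element of $X_b$: if $z>v$ were in $X_b$, a witness $r\ge z$ with $f(r)\le b$ would give $v\in U_r\subseteq X_a$, contradicting $v\notin X_a$. Next, since $w\prec v$, the minimum $m$ of $\widehat F_w$ in $X_b$ (which exists because $w$ is an up beat point) satisfies $m\le v$; if $m\ne v$ then $w<m<v$ contradicts $w\prec v$, so $m=v$, and maximality of $v$ then forces $\widehat F_w=\{v\}$ in $X_b$. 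Finally, since $v\in X_b$, every element below $v$ already lies in $X_b$, so $\widehat U_v$ computed in $X$ agrees with $\widehat U_v$ computed in $X_b$, and hence $\widehat U_v$ computed in $X_b-\{w\}$ equals $\widehat U_v-\{w\}$; this is an open subposet of $X_a$, and $U_v$ in $X_b-\{w\}$ equals $(\widehat U_v-\{w\})\cup\{v\}$, which has maximum $v$ and is therefore contractible. Because $v$ is maximal in $X_b-\{w\}$ we have $F_v=\{v\}$ there, whence
$$\widehat C_v=(U_v\cup F_v)-\{v\}=\widehat U_v-\{w\}\qquad\text{in }X_b-\{w\}.$$

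With this identification the ``moreover'' and one half of the equivalence are immediate: by definition $v$ is a $\gamma$-point of $X_b-\{w\}$ if and only if $\widehat U_v-\{w\}$ is homotopically trivial, i.e.\ if and only if the edge $(w,v)$ of $\mathcal H(X)$ is admissible, and by Hurewicz's theorem this holds if and only if the matching element $w\prec v$ is simultaneously $1$-admissible and homologically admissible. So the remaining task is to show that $i$ is a weak homotopy equivalence if and only if $v$ is a $\gamma$-point of $X_b-\{w\}$. For sufficiency, assuming $w\prec v$ is $1$-admissible and homologically admissible, the previous sentence makes $v$ a $\gamma$-point of $X_b-\{w\}$, and since the removal of a $\gamma$-point is a weak homotopy equivalence \cite{Barmak_Minian,Barmak_book}, $i\colon X_a=(X_b-\{w\})-\{v\}\hookrightarrow X_b-\{w\}$ is one. (Self-containedly: since $v$ is maximal, every chain of $X_b-\{w\}$ lies in $X_a$ or in $U_v$, so $\mathcal K(X_b-\{w\})=\mathcal K(X_a)\cup_{\mathcal K(\widehat U_v-\{w\})}\mathcal K(U_v)$ with the second factor a cone; the homotopy pushout is the mapping cone of $|\mathcal K(\widehat U_v-\{w\})|\hookrightarrow|\mathcal K(X_a)|$, which weakly retracts to $|\mathcal K(X_a)|$ when $\widehat U_v-\{w\}$ is homotopically trivial, and Theorem~\ref{thm:mccords_thms} transports this back to the posets.)

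For necessity, assume $i$ is a weak homotopy equivalence. Homological admissibility is the easy half: the long exact sequence of the pair gives $H_*(X_b-\{w\},X_a)=0$, and an excision exactly as in the proof of Proposition~\ref{prop:trick3} (cover $X_b-\{w\}$ by $X_a$ and the contractible set $U_v$, whose intersection is $\widehat U_v-\{w\}$) yields $\widetilde H_{*-1}(\widehat U_v-\{w\})\cong H_*(X_b-\{w\},X_a)=0$, so $\widehat U_v-\{w\}$ is acyclic. The genuinely delicate half is $1$-admissibility, i.e.\ $\pi_1(\widehat U_v-\{w\})=1$. Van Kampen applied to the same cone decomposition only gives that the \emph{image} of $\pi_1(\widehat U_v-\{w\})$ in $\pi_1(X_a)$ is trivial; upgrading this to $\pi_1(\widehat U_v-\{w\})=1$ is where one must use the special position of $\widehat U_v-\{w\}$ (it is an open subposet of $X_a$, $v$ is the unique element of $X_b$ lying above $w$, and $X$ is down-wide with $f$ Morse), passing to universal covers and combining with acyclicity to see that the homotopy cofiber of $i$ is weakly contractible only when $\widehat U_v-\{w\}$ itself is.

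I expect this last step to be the main obstacle: one has to rule out the possibility that $i$ is a weak homotopy equivalence merely because the loops of $\widehat U_v-\{w\}$ happen to die in $X_a$, and show that in the present setting ``weak homotopy equivalence of $i$'' genuinely detects the weak contractibility of $\widehat U_v-\{w\}$. Everything else — the maximality of $v$, the $\gamma$-point reformulation, the excision argument, and the sufficiency direction — is routine once the local picture of $v$ in $X_b-\{w\}$ is in hand.
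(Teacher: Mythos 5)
Your sufficiency direction and the ``moreover'' claim are correct and, despite the different packaging, essentially coincide with the paper's argument: the paper simply applies McCord's Theorem (Theorem \ref{thm:McCords_thm}) to the minimal basis $\{U_x : x\in X_b-\{w\}\}$ of $X_b-\{w\}$ --- for $x\neq v$ the preimage $i^{-1}(U_x)$ has a maximum, and for $x=v$ the preimage is $\widehat{U}_v-\{w\}$, which is homotopically trivial by Hurewicz when the matching edge is $1$-admissible and homologically admissible. Your route through the Barmak--Minian one-point reduction (``removing a $\gamma$-point is a weak homotopy equivalence'') is that same McCord covering argument quoted as a black box, and your cone/mapping-cone decomposition of $\mathcal{K}(X_b-\{w\})$ is its simplicial shadow. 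In fact you are more careful than the paper on one point: you verify that $v$ is maximal in $X_b-\{w\}$, which is what identifies $\widehat{C}_v$ with $\widehat{U}_v-\{w\}$ and justifies the $\gamma$-point assertion; the paper leaves this implicit.

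The genuine gap in your write-up is the one you flag yourself: the ``only if'' direction for $1$-admissibility is not proved, and your proposed fix (universal covers plus acyclicity) will not come for free --- from the decomposition of $X_b-\{w\}$ as $X_a$ with a cone on $\widehat{U}_v-\{w\}$ attached, one can only conclude that $\pi_1(\widehat{U}_v-\{w\})$ dies in $\pi_1(X_a)$; an acyclic, non-simply-connected link coned off against a suitable ambient space can still make such an inclusion a weak equivalence, so no argument using only this decomposition can close the step. You should know, however, that the paper's own proof does not establish this direction either: McCord's Theorem is a one-way criterion, so the ``if and only if'' appearing in its case (2) concerns the restriction $\widehat{U}_v-\{w\}\hookrightarrow U_v$, not the inclusion $i$ itself, and only the sufficiency direction is used downstream (in Theorem \ref{thm:collapse_thm}). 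Your excision argument does recover the homological half of necessity, exactly as in Proposition \ref{prop:trick3}. So relative to what the paper actually proves you have matched, and in places sharpened, the argument; relative to the literal ``if and only if'' statement, the $\pi_1$ half of necessity remains open both in your proposal and in the paper.
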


\begin{proof}
	We will apply McCord's Theorem (Theorem \ref{thm:McCords_thm}) to the base $\{U_x\co x\in X_{b}-\{w\}\}$. There are two cases to consider:
	\begin{enumerate}
		\item If $x\neq v$, then $i^{-1}(U_{x})$ has a maximum and therefore is contractible, so $i_{\vert i^{-1}(U_{x})}\colon i^{-1}(U_{x})\to U_{x}$ is a weak homotopy equivalence.
		\item If $x= v$, then $i_{\vert i^{-1}(U_{x})}\colon i^{-1}(U_{x})\to U_{x}$ is the map $i\colon \widehat{U}_{v}-\{w\} \hookrightarrow U_{v}$. The subspace $U_{v}$ is contractible so it is homotopically trivial. Therefore $i\colon \widehat{U}_{v}-\{w\} \hookrightarrow U_{v}$ is a weak homotopy equivalence if and only if $\widehat{U}_{v}-\{w\}$ is homotopically trivial. Now, since $\widehat{U}_{v}-\{w\}$ is simply connected and acyclic, by Hurewicz Theorem it is homotopically trivial.
	\end{enumerate}
\end{proof}

At this point we can conclude:

\begin{proof}[Proof of Theorem \ref{thm:collapse_thm}]
	It follows by combining Propositions \ref{prop:removing_beat_point_homot_equiv}, \ref{prop:trick} and \ref{prop:trick2}.
\end{proof}

The following result explains what happens with the homotopy type when we reach critical values. It plays the role of \cite[Theorem 3.2]{Milnor} in the case of smooth Morse theory, and \cite[Theorem 3.4]{Forman2} in discrete Morse theory.

\begin{theorem}\label{thm:homot_poset_crit_points}
	Let $X$ be a path-connected down-wide finite poset and $f\colon X \to \R$ an order preserving Morse function satisfying the Exclusion condition. Suppose that $x^{(p)}$ is a critical point for $f$, that $f(x)\in (a,b]$ for $a<b$, and that there  are no other points in $f^{-1}((a,b])$. Then $X_b=X_a \cup_{\partial x^{(p)}}x^{(p)}$. 
\end{theorem}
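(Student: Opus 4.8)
The plan is to analyze the passage through the single critical value $f(x) = c \in (a,b]$ directly, by describing $X_b$ in terms of $X_a$. Since $f$ is order preserving and $x$ is critical, I first want to understand exactly which elements of $X$ lie in $X_b \setminus X_a$. I claim this set is precisely $\{x\}$. Indeed, an element $u$ lies in $X_t$ iff there exists $r \geq u$ with $f(r) \leq t$. Since the only element with value in $(a,b]$ is $x$ itself, and all other elements have value $\leq a$ or $> b$, the only way $u \in X_b \setminus X_a$ can occur is if the minimal value among $\{f(r) : r \geq u\}$ is exactly $c$, which forces $x \geq u$, i.e. $u \in U_x$. So $X_b \setminus X_a \subseteq U_x$. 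To pin it down to $\{x\}$, I use that $x$ is critical together with the down-wide hypothesis: for any $u$ with $u < x$, I must show $u \in X_a$, i.e. some $r \geq u$ has $f(r) \leq a$. If $u = w \prec x$, criticality of $x$ gives $f(w) < f(x) = c$, hence $f(w) \leq a$ (no values in $(a,c)$), so $w \in X_a$ via $r = w$. For $u < x$ not covered by $x$, one walks down a saturated chain $u = u_0 \prec u_1 \prec \cdots \prec u_k = x$ and applies the Morse and criticality conditions inductively along the chain to show $f(u_{k-1}) \leq a$, hence $u_{k-1} \in X_a$ and therefore $u \leq u_{k-1}$ gives $u \in X_a$. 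Thus $X_b \setminus X_a = \{x\}$.

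Next I must identify the way $x$ is attached, i.e. show $X_b = X_a \cup_{\partial x^{(p)}} x^{(p)}$, where the right-hand side means $X_a$ with a new maximal element $x$ added whose strict down-set is exactly $\partial x$... no — here the notation $\cup_{\partial x^{(p)}} x^{(p)}$ refers to the non-Hausdorff analogue of attaching a cell along its boundary, so what I actually need to verify is that $\widehat{U}_x \subseteq X_a$ and that $X_b$, as a poset, is obtained from $X_a$ by adjoining $x$ above all of $\widehat{U}_x$ (which must equal $\partial x$ together with everything below it). The inclusion $\widehat{U}_x \subseteq X_a$ is exactly what the chain-walking argument above established. It remains to check that $x$ is not below anything in $X_b$: if $x < y$ for some $y \in X_b$, then $y \neq x$ has $f(y) \leq b$, and since $x$ is critical, $f(x) < f(y)$, so $f(y) \in (c,b]$, contradicting that $x$ is the only point with value in $(a,b]$. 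Hence $x$ is maximal in $X_b$, its minimal open set is $U_x = \{x\} \cup \widehat{U}_x$ with $\widehat{U}_x \subseteq X_a$, and every other minimal open set $U_u$ for $u \in X_b$ equals the corresponding one in $X_a$. This is precisely the statement that $X_b$ is the adjunction $X_a \cup_{\partial x^{(p)}} x^{(p)}$.

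The step I expect to be the main obstacle is the chain-walking argument showing $\widehat{U}_x \subseteq X_a$, because it is where both the down-wide hypothesis and the Exclusion condition must be used carefully to rule out the possibility that some element strictly below $x$ "escapes upward" only through $x$ itself. Concretely, for $w \prec x$ one uses criticality of $x$ directly; but for an element $u$ two or more steps below $x$, the naive induction would need $w \prec x$ to satisfy $f(w') \leq a$ for \emph{every} lower cover $w'$ of the next element up, and a single application of the Morse condition only controls one such cover. The down-wide hypothesis guarantees $\#\partial x \geq 2$, and the Exclusion condition (via Lemma \ref{lma:exclusion:finite}-style reasoning, or directly, since here we are not assuming two-wide but are given the Exclusion condition) ensures the bookkeeping of which covers can have larger value closes up consistently; the same idea already appears in the proof of Proposition \ref{prop:path_components_critical_values}, and I would essentially reprise that argument, iterated down a maximal chain from $u$ to $x$. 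Once $\widehat{U}_x \subseteq X_a$ is secured, the rest is the purely order-theoretic unwinding of the adjunction described above, which is routine.
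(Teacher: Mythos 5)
Your proof is correct and follows essentially the same route as the paper's: criticality of $x$ together with the uniqueness of the value in $(a,b]$ places every $w\prec x$ (hence all of $\widehat{U}_x$) inside $X_a$, while order preservation forces everything above $x$ to have value $>b$, so the only point added is $x$ itself, attached along $\partial x$. The only remark worth making is that the step you flag as the main obstacle is in fact immediate and needs neither induction nor any down-wide/Exclusion bookkeeping: a single application of criticality to $u_{k-1}\prec x$ gives $f(u_{k-1})\leq a$, and since $X_a$ is an open (down-closed) subposet --- equivalently, since $f$ is order preserving --- the relation $u\leq u_{k-1}$ already yields $u\in X_a$.
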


\begin{proof}
	We may assume that $f$ is injective, that $f(x)>a$ and that the only point in $f^{-1}((a,b])$ is $x$. 
	Since $x$ is critical, then, given $y^{(p+1)}\succ x$, $f(y)>f(x)$. Hence, $f(y)>b$ and since $f$ is order preserving and satisfies the Exclusion condition, then  $f(z)>b$ for every $z>x$. Therefore, $x\cap X_a=\emptyset$. Given any $w^{(p-1)} \prec x$, due to the criticality of $x$, it holds that $f(w)<f(x)$. Therefore $f(w)\leq a$ and $w\in X_a$. Hence $\partial x \subseteq X_a$. That is, $X_b=X_a \cup_{\partial x^{(p)}}x^{(p)}$.
\end{proof}

\section{Consequences}\label{sec:consequences}

\subsection{Extension of Forman's Decomposition Theorem}
As a first consequence, we extend Forman's Discrete Morse theory on regular CW-complexes to more general cell complexes. We recover Forman's result \cite[Corollary 3.5]{Forman2} as a particular case. Moreover, we do not need to make use of simple homotopy types. We will work with less rigid cell structures than regular CW-complexes while maintaining some combinatorial structure. 

\begin{definition}\cite{Barmak_book,Minian}
	A CW-complex $K$ is h-regular if the attaching map of each cell is a homotopy equivalence onto its image and the closed cells are subcomplexes of $K$. Equivalently, the CW-complex $K$ is h-regular if the closed cells are contractible subcomplexes. 
\end{definition}

Given an h-regular CW-complex $K$, the cells whose attaching maps are not homeomorphisms are called {\em irregular} cells. For a detailed exposition of h-regular CW-complexes and some examples the reader is referred to \cite{Barmak_Minian,Barmak_book}. 

We recall Forman's definition of Morse function on an h-regular CW-complex $K$ \cite{Forman2}.

\begin{definition}
	Let $K$ be an h-regular CW-complex,  a  {\it discrete Morse function} on $K$ is a map $f\colon K \to \R$  such that, for every $p$-cell $\sigma^{(p)}\in K$, we have:\begin{enumerate}
		\item If $\sigma$ is an irregular face of $\tau^{(p+1)}$, then $f(\tau)>f(\sigma)$. Moreover, 
		$$
		\#\{\tau^{(p+1)}\in K\colon \sigma\prec \tau \text{ and } f(\sigma)\geq f(\tau)\}\leq 1.
		$$
		\item If $\beta^{(p-1)}$ is an irregular face of $\sigma$, then $f(\beta)<f(\sigma)$. Moreover, 
		$$
		\#\{\beta^{(p-1)}\in K \colon \beta\prec \sigma \text{ and } f(\beta)\geq f(\sigma)\}\leq 1.
		$$		 
	\end{enumerate}
\end{definition}

We present a generalized notion of a discrete Morse function on a h-regular CW-complex $K$:

\begin{definition}
	Let $K$ be a h-regular CW-complex,  a  {\it Morse function} on $K$ is a map $f\colon K \to \R$  such that, for every $p$-cell $\sigma^{(p)}\in K$, we have
	$$
	\#\{\tau^{(p+1)}\in K\colon \sigma\prec \tau \text{ and } f(\sigma)\geq f(\tau)\}\leq 1
	$$
	and
	$$
	\#\{\beta^{(p-1)}\in K \colon \beta\prec \sigma \text{ and } f(\beta)\geq f(\sigma)\}\leq 1.
	$$
\end{definition} 

For both definitions, a $p$-cell $\sigma$ is {\em critical of index} $p$ for $f\colon K \to \R$ if:
\begin{enumerate}
	\item $	\#\{\tau^{(p+1)}\in K\colon \sigma\prec \tau \text{ and } f(\sigma)\geq f(\tau)\}=0$ and
	\item $	\#\{\beta^{(p-1)}\in K \colon \beta\prec \sigma \text{ and } f(\beta)\geq f(\sigma)\}=0.	$
\end{enumerate}  

Our definition generalizes Forman's since we do not force non-regular cells to be critical.

We recall from the proof of \cite[Theorem 7.1.7]{Barmak_book} the following result, which is a generalization of Theorem \ref{thm:mccords_thms} (2): 

\begin{prop}\label{thm:weak_homot_equiv_h_reg_CW}
	Let $K$ be a finite h-regular CW-complex and let $L\subset K$ be a subcomplex.
	\begin{enumerate}
		\item There are maps $f_K\co K\to \Delta(K)$ and $f_L\co L\to \Delta(L)$ defined in \cite[Theorem 7.1.7]{Barmak_book} which are weak homotopy equivalences.
		\item The following diagram is commutative:
		$$\begin{tikzcd}
		L  \arrow[d, "f_L",] \arrow[r, hookrightarrow, " i "] & K \arrow[d, "f_K",]\\
		\Delta(L) \arrow[r, hookrightarrow, " i "]  &  \Delta(K).
		\end{tikzcd}$$  
		Therefore, $i\co L\hookrightarrow K$ is a (weak) homotopy equivalence if and only if $i\co \Delta(L)\hookrightarrow \Delta(K)$ is a weak homotopy equivalence.
	\end{enumerate}
\end{prop}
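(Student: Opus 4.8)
The plan is to extract what is needed from the proof of \cite[Theorem 7.1.7]{Barmak_book}, where the map $f_K\co K\to\Delta(K)$ is built, and then to observe that the construction is natural for the inclusion of a subcomplex. First I would recall the construction: one proceeds by induction on the skeleta of $K$, defining $f_K$ on $K^{(n)}$ from $f_K$ on $K^{(n-1)}$. Since $K$ is h-regular, each closed cell $\overline{e^n}$ is a contractible subcomplex and the attaching map $\varphi\co \mathbb{S}^{n-1}\to K^{(n-1)}$ lands in $\overline{\partial e^n}$, which is itself a contractible subcomplex; this lets one extend the already-defined map $K^{(n-1)}\to\Delta(K^{(n-1)})\hookrightarrow\Delta(K)$ over the cell $e^n$ by sending $e^n$ into the open star of the vertex $e^n\in\Delta(K)$, using contractibility to fill in. McCord's Theorem (Theorem \ref{thm:McCords_thm}), applied to the basis-like cover $\{U_\sigma\}_{\sigma\in\Delta(K)}$ of $\Delta(K)$ together with the fact that each $f_K^{-1}(U_\sigma)$ deformation retracts onto $\overline{\sigma}$ and $\overline{\sigma}$ is contractible, yields that $f_K$ is a weak homotopy equivalence. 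The same argument applied to $L$ gives $f_L$.

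Next I would establish the commutativity of the square. Because $L$ is a subcomplex of $K$, the skeleta satisfy $L^{(n)}=L\cap K^{(n)}$, and the inductive construction of $f_K$, when restricted to $L$, makes exactly the same choices that the inductive construction of $f_L$ makes, composed with the inclusion $\Delta(L)\hookrightarrow\Delta(K)$ (which is well-defined since a cell of $L$ is a cell of $K$, and inclusions of cells are preserved). Concretely, the extension of the map over a cell $e^n\subseteq L$ only depends on data internal to $\overline{e^n}$ and $\overline{\partial e^n}$, which are the same whether computed in $L$ or in $K$; hence $f_K\circ i = i\circ f_L$ on each skeleton, and therefore on all of $L$. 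Strictly speaking one must check that the \emph{particular} representative of $f_K$ produced in \cite{Barmak_book} can be chosen to restrict this way; if the published construction involves non-canonical choices, I would note that they may be made compatibly by performing the induction on $K$ and simply reusing, for cells of $L$, the choices already made — this is the one point where I expect to have to be slightly careful rather than merely invoke the cited proof verbatim.

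Finally, the last assertion is a formal consequence of the commutative square and the two-out-of-three property. Since $f_L$ and $f_K$ are weak homotopy equivalences, in the commuting square the composite $f_K\circ i = i\circ f_L$ shows that $i\co L\hookrightarrow K$ is a weak homotopy equivalence if and only if $i\co\Delta(L)\hookrightarrow\Delta(K)$ is: indeed $i_{\Delta}=f_K\circ i_K\circ f_L^{-1}$ up to weak equivalence, so one is a weak equivalence precisely when the other is. For the parenthetical "(weak)" on the topological side, I would remark that $L$ and $K$ are CW-complexes, so by Whitehead's theorem a weak homotopy equivalence $i\co L\hookrightarrow K$ is in fact a homotopy equivalence; thus the biconditional holds at the level of genuine homotopy equivalence on the space side as well. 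The main obstacle, as flagged, is not conceptual but bookkeeping: making sure the construction of $f_K$ in \cite{Barmak_book} is (or can be arranged to be) natural with respect to subcomplex inclusions, so that the square genuinely commutes on the nose rather than merely up to homotopy.
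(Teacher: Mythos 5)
Your argument follows essentially the same route as the paper, which gives no independent proof of this proposition: it simply recalls the maps $f_K$, $f_L$ from the proof of Barmak's Theorem 7.1.7, uses commutativity of the square, and deduces the biconditional formally (two-out-of-three on homotopy groups, plus Whitehead on the CW side), exactly as you do. Your extra care about performing the induction on $K$ while reusing, for cells of $L$, the choices already made is the right way to get strict commutativity, and as you note, commutativity up to homotopy would already suffice for the final statement.

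One factual slip in your recollection of the construction should be corrected: for an h-regular complex the image $\overline{\partial e^n}$ of the attaching map of an $n$-cell is \emph{not} a contractible subcomplex. H-regularity says the attaching map is a homotopy equivalence onto its image, so this boundary subcomplex is weakly equivalent to $\mathbb{S}^{n-1}$; what is contractible is the closed cell $\overline{e^n}$ and, crucially for the extension step, the minimal open set $U_{e^n}\subseteq \Delta(K)$ (it has a maximum), which is why the composite $\mathbb{S}^{n-1}\to \Delta\bigl(\overline{\partial e^n}\bigr)\subseteq U_{e^n}$ is null-homotopic and extends over the disk, with McCord's Theorem then giving the weak equivalence. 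Since both you and the paper import the existence of $f_K$ and $f_L$ from the citation, this slip does not invalidate the overall argument, but as written that sentence misstates what h-regularity provides.
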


It is clear that for any CW-complex $K$, given a Morse function $f\colon K\to \R$, it induces a combinatorial Morse function on its face poset $\Delta(f)\colon \Delta(K)=X\to \R$ such that the face poset functor satisfies $\Delta(K_a)=\Delta(K)_a$.

\begin{corollary}\label{thm:decomposition_theorem_discrete_morse_theory}
	Let $K$ be a finite h-regular CW-complex and let $f\colon K\to \R$ be a Morse function  satisfying the Exclusion condition and such that the associated Morse matching in the face poset, $\mathcal{M}_{\Delta(f)}$, is $1$-admissible and homologically admissible. Then $K$ is homotopy equivalent to a CW-complex with one $p$-cell for each critical point of index $p$.
\end{corollary}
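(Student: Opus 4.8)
The plan is to transport the problem to the face poset $X:=\Delta(K)$ and apply the structural theorems of Section~\ref{sec:fundamental_theorems_and_consequences}. Since $K$ is $h$-regular, $X$ is a finite graded down-wide poset, and we may assume $K$ (hence $X$) connected, arguing on connected components otherwise. The Morse function $f$ induces the combinatorial Morse function $\bar f:=\Delta(f)\colon X\to\R$; by hypothesis $\bar f$ satisfies the Exclusion condition, one has $\Delta(K_a)=X_a$ for every $a$, the associated matching $\mathcal{M}_{\bar f}=\mathcal{M}_{\Delta(f)}$ is $1$-admissible and homologically admissible, and the critical elements of $\bar f$ are exactly the critical cells of $f$, the height in $X$ of such an element being the dimension, i.e. the index, of the corresponding critical cell. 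By Corollary~\ref{coro:exclusive_morse_function_implies_exclusive_order_preserving_one} I may replace $\bar f$ by an order-preserving Morse function with the same associated matching (hence the same critical set and the same admissibility properties), and by Lemma~\ref{lema:injectivity_for_granted} I may also assume it injective. Finally, by Proposition~\ref{thm:weak_homot_equiv_h_reg_CW} together with McCord's Theorem~\ref{thm:mccords_thms}, $K$ is weakly homotopy equivalent to $|\mathcal{K}(X)|$; so by Whitehead's theorem it suffices to produce a CW-complex $L$ with one $p$-cell for each critical point of index $p$ together with a weak homotopy equivalence relating $|\mathcal{K}(X)|$ and $L$.

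Next I would filter $X$ by sublevel sets. Let $c_1<\dots<c_k$ be the critical values of $\bar f$, with corresponding critical elements $x_1,\dots,x_k$ of heights $p_1,\dots,p_k$, and fix regular values $t_0<c_1<t_1<\dots<c_k<t_k$ with $X_{t_0}=\emptyset$ and $X_{t_k}=X$. For small $\varepsilon>0$ the interval $(t_{i-1},c_i-\varepsilon]$ contains no critical value, so Theorem~\ref{thm:collapse_thm}(1) makes the inclusion $X_{t_{i-1}}\hookrightarrow X_{c_i-\varepsilon}$ a weak homotopy equivalence, and likewise $X_{c_i}\hookrightarrow X_{t_i}$ is a weak homotopy equivalence; and applying Theorem~\ref{thm:homot_poset_crit_points} on a small interval around $c_i$ (using that $\bar f$ is injective, order preserving and satisfies the Exclusion condition, so that $\{x_i\}$ is the whole preimage of that interval) gives $X_{c_i}=X_{c_i-\varepsilon}\cup_{\partial x_i}x_i$, that is, $X_{c_i}$ is obtained from $X_{c_i-\varepsilon}$ by adjoining the single maximal element $x_i$ with $\widehat{U}_{x_i}\subseteq X_{c_i-\varepsilon}$.

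It then remains to recognize this last step as the attachment of a $p_i$-cell up to homotopy, and this is where I expect the main work to lie. I would use the elementary fact that, for a finite poset $Y$ and a new maximal element $x$ with $\widehat{U}_x=A\subseteq Y$, one has $\mathcal{K}(Y\cup\{x\})=\mathcal{K}(Y)\cup_{\mathcal{K}(A)}\bigl(x*\mathcal{K}(A)\bigr)$, so that $|\mathcal{K}(Y\cup\{x\})|$ is the mapping cone of the cofibration $|\mathcal{K}(A)|\hookrightarrow|\mathcal{K}(Y)|$. Here $A=\widehat{U}_{x_i}$ is the face poset of the boundary $\partial\sigma_i$ of the critical cell $\sigma_i$; since $K$ is $h$-regular, the attaching map of $\sigma_i$ is a homotopy equivalence onto $\partial\sigma_i$, hence $\partial\sigma_i\simeq\mathbb{S}^{p_i-1}$, and so, by Proposition~\ref{thm:weak_homot_equiv_h_reg_CW} and McCord's Theorem~\ref{thm:mccords_thms}, $|\mathcal{K}(\widehat{U}_{x_i})|\simeq\mathbb{S}^{p_i-1}$. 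Therefore the mapping cone is, up to homotopy equivalence, $|\mathcal{K}(X_{c_i-\varepsilon})|$ with a single $p_i$-cell attached. Composing with the weak equivalences from the regular intervals and inducting on $i$ — replacing each attaching map by a cellular one so as to stay within honest CW-complexes — yields a CW-complex $L$ with exactly one $p$-cell for each critical point of index $p$ and a weak homotopy equivalence $|\mathcal{K}(X)|\simeq L$; then $K\simeq|\mathcal{K}(X)|\simeq L$ by Whitehead. The delicate points are exactly the two just used: the geometric input $|\mathcal{K}(\widehat{U}_{x_i})|\simeq\mathbb{S}^{p_i-1}$ (the one place $h$-regularity is genuinely invoked, via the attaching maps being homotopy equivalences onto their images) and the homotopy-theoretic bookkeeping needed to turn the abstract gluing $X_{c_i}=X_{c_i-\varepsilon}\cup_{\partial x_i}x_i$ into a genuine cell attachment — replacing the cofibration $|\mathcal{K}(A)|\hookrightarrow|\mathcal{K}(Y)|$ by a map $\mathbb{S}^{p_i-1}\to|\mathcal{K}(Y)|$ without altering the homotopy type of the cone, and applying cellular approximation — while keeping the output an actual CW-complex.
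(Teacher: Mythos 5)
Your argument is correct, and its skeleton (pass to $\Delta(K)$, replace $\Delta(f)$ by an order-preserving function with the same matching via Corollary~\ref{coro:exclusive_morse_function_implies_exclusive_order_preserving_one}, filter by sublevel sets, use Theorem~\ref{thm:collapse_thm} on critical-value-free intervals) matches the paper; but the critical-value step is handled genuinely differently. The paper stays on the CW-complex $K$ itself: it uses Proposition~\ref{thm:weak_homot_equiv_h_reg_CW} to turn the poset-level weak equivalences $X_a\hookrightarrow X_b$ into homotopy equivalences $K_a\hookrightarrow K_b$, and then, at a critical value, invokes ``the same combinatorial proof'' of Forman's \cite[Theorem 3.4]{Forman2} (justified by order preservation and the Exclusion condition) to conclude that $K_b\simeq K_a$ with a $p$-cell attached --- it never uses Theorem~\ref{thm:homot_poset_crit_points} in this proof. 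You instead work entirely with the order complex $|\mathcal{K}(X)|$, use the paper's own adjunction theorem (Theorem~\ref{thm:homot_poset_crit_points}) to see $X_{c_i}=X_{c_i-\varepsilon}\cup_{\partial x_i}x_i$, identify $|\mathcal{K}(X_{c_i})|$ as the mapping cone of $|\mathcal{K}(\widehat{U}_{x_i})|\hookrightarrow|\mathcal{K}(X_{c_i-\varepsilon})|$ via the join structure, and feed in $h$-regularity only through $|\mathcal{K}(\widehat{U}_{x_i})|\simeq\mathbb{S}^{p_i-1}$, returning to $K$ only at the end via McCord and Whitehead. Your route has the virtue of being self-contained within the paper's machinery (it replaces the external appeal to Forman's argument by Theorem~\ref{thm:homot_poset_crit_points} plus standard mapping-cone/cofibration facts), at the cost of the homotopy-theoretic bookkeeping you flag (replacing the cofibration by an attaching map $\mathbb{S}^{p_i-1}\to|\mathcal{K}(X_{c_i-\varepsilon})|$ and cellular approximation), which you leave at a level of detail comparable to the paper's own sketch; the paper's route keeps the cells of the final CW-complex literally tied to the critical cells of $K$ and avoids cones on order complexes. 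Two minor points: Lemma~\ref{lema:injectivity_for_granted} is a local perturbation statement, so ``assume $\bar f$ injective'' globally should be phrased as perturbing on each interval (or treating critical points with equal values separately, as they attach disjointly); and the gradedness of $\Delta(K)$ needed for Corollary~\ref{coro:exclusive_morse_function_implies_exclusive_order_preserving_one} should be noted as a consequence of $h$-regularity, as the paper implicitly does.
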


\begin{proof}
	First, consider the Morse function $\Delta(f)\colon \Delta(K)=X\to \R$ induced by $f\colon K\to \R$. By Corollary \ref{coro:exclusive_morse_function_implies_exclusive_order_preserving_one}, we can assume that $\Delta(f)\colon \Delta(K)=X\to \R$ is order preserving and satisfies the Exclusion condition.  Observe that the face poset of an h-regular CW-complex is a down-wide poset. Suppose that $(a,b]$ contains no critical values of $f$, $a<b$.  Then $i \colon X_a \hookrightarrow X_b$ is a weak homotopy equivalence by our collapsing theorem (Theorem \ref{thm:collapse_thm}). From Proposition \ref{thm:weak_homot_equiv_h_reg_CW}, $i \colon K_a \hookrightarrow K_b$ is a homotopy equivalence.   Suppose $x^{(p)}$ is a critical cell for $f$, with $f(x)\in (a,b]$, and that there  are no other cells in $f^{-1}((a,b])$. Then the same combinatorial proof of \cite[Theorem 3.4]{Forman2}  (observe that here we are using that $\Delta(f)\colon \Delta(K)=X\to \R$ is order preserving and satisfies the Exclusion condition) proves that $K_b$ has the same homotopy type as $K_a$ with a $p$-cell attached. The result follows.
\end{proof}	

\begin{remark}
	Corollary \ref{thm:decomposition_theorem_discrete_morse_theory} provides an alternative approach to \cite[Corollary 4.2]{Minian} by means of our Fundamental Theorems. Moreover, it does not involve the use of simple homotopy type.
\end{remark}

\subsection{Morse-Pitcher Inequalities}
Another consequence of our structural theorems of Morse Theory for finite spaces is that we can reproduce the classical proof (see \cite{Pitcher,Milnor} for the standard argument) of Morse-Pitcher inequalities in this context.

We consider coefficients in a principal ideal domain.  Let  $f\colon X\to \R$ be a Morse function, we denote by $m_i$ the number of critical points of height $i$ and by $b_i$ the Betti number of dimension $i$ without expliciting the space nor the domain of coefficients.

\begin{corollary}[Strong Morse inequalities]
	Let $X$ be a down-wide poset and let $f\colon X\to \R$ be an order preserving Morse function  satisfying the Exclusion condition. Suppose that the Morse matching associated to $f$ is homologically admissible and homology-regular. Then, for every $n\geq 0$ and domain of coefficients:
	$$m_n-m_{n-1}+\cdots +(-1)^n m_0\geq b_n-b_{n-1}+\cdots +(-1)^n b_0.$$
\end{corollary}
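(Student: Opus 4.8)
The plan is to follow the classical Morse-theoretic argument, building on the two Fundamental Theorems already established (Theorem~\ref{thm:collapse_thm_homology} and Theorem~\ref{thm:homot_poset_crit_points}). First I would list the critical values of $f$ as $c_1 < c_2 < \cdots < c_N$ and choose real numbers $a_0 < c_1 < a_1 < c_2 < \cdots < c_N < a_N$ so that each interval $(a_{j-1}, a_j]$ contains exactly the single critical value $c_j$ and $X_{a_0} = \emptyset$, $X_{a_N} = X$. (By Corollary~\ref{coro:exclusive_morse_function_implies_exclusive_order_preserving_one} and the perturbation Lemma~\ref{lema:injectivity_for_granted} we may assume $f$ is order preserving, injective, and still satisfies the Exclusion condition, so critical values are isolated.) The key structural input is that passing from $X_{a_{j-1}}$ to $X_{a_j}$ amounts, up to homology, to attaching a single cell of dimension $p_j = h(x_j)$ where $x_j$ is the critical point with $f(x_j) = c_j$: in the absence of critical values homology is unchanged by Theorem~\ref{thm:collapse_thm_homology}, and across the critical value Theorem~\ref{thm:homot_poset_crit_points} gives $X_{a_j} = X_{a_{j-1}} \cup_{\partial x_j^{(p_j)}} x_j^{(p_j)}$. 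I would then need to identify $H_*(X_{a_j}, X_{a_{j-1}})$ with the homology of a sphere in the right degree: this is where the \emph{homology-regular} hypothesis enters, since it guarantees $\widehat{U}_{x_j}$ has the homology of $\mathbb{S}^{p_j - 1}$, and combined with excision (as in the proof of Proposition~\ref{prop:trick3}, using $A = X_{a_{j-1}}$, $B = U_{x_j}$, so $A \cap B = \widehat{U}_{x_j}$) and the long exact sequence of the contractible pair $(U_{x_j}, \widehat{U}_{x_j})$, one gets $H_k(X_{a_j}, X_{a_{j-1}}) \cong H_k(U_{x_j}, \widehat{U}_{x_j}) \cong \widetilde{H}_{k-1}(\widehat{U}_{x_j})$, which is the coefficient ring for $k = p_j$ and $0$ otherwise.

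With this in hand, the combinatorial part is the standard subadditivity argument. Define, for the filtration $\emptyset = X_{a_0} \subseteq X_{a_1} \subseteq \cdots \subseteq X_{a_N} = X$, the relative Betti-type quantities and use that rank is subadditive over short exact sequences; equivalently, invoke the standard lemma that if $S_n(A,B)$ denotes the alternating sum $\sum_{i=0}^n (-1)^{n-i}\operatorname{rank} H_i(A,B)$ then $S_n$ is subadditive on triples, i.e. $S_n(X_{a_j}, X_{a_0}) \le \sum_{j} S_n(X_{a_j}, X_{a_{j-1}})$, together with the fact that $S_n(A,B) \ge S_n(A) - \dots$ controls absolute Betti numbers. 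Summing the one-cell contributions over $j$, the term $\sum_j \operatorname{rank} H_i(X_{a_j}, X_{a_{j-1}})$ equals exactly $m_i$, the number of critical points of height $i$, and the left side telescopes to an expression bounding the alternating sum of the $b_i = \operatorname{rank} H_i(X)$. This yields $m_n - m_{n-1} + \cdots + (-1)^n m_0 \ge b_n - b_{n-1} + \cdots + (-1)^n b_0$ for every $n$ and every PID of coefficients, which is precisely the claim.

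The main obstacle I anticipate is not the homological bookkeeping (which is textbook, e.g.\ Milnor~\cite{Milnor} or Pitcher~\cite{Pitcher}) but rather ensuring that the reduction steps are legitimate in this poset setting: specifically, that perturbing $f$ to be injective and order preserving does not alter the critical set, the heights of critical points, or the homologically admissible / homology-regular status of the associated matching, and that the filtration by the $X_{a_j}$ behaves well (each $X_{a_j}$ open in $X$, and the critical value $c_j$ genuinely corresponds to attaching one cell whose dimension is the height of the critical element). Once one checks that these hypotheses are stable under the normalization of $f$ — which follows from Corollary~\ref{coro:exclusive_morse_function_implies_exclusive_order_preserving_one} and Lemma~\ref{lema:injectivity_for_granted}, noting the matching $\mathcal{M}_f$ depends only on the combinatorial data, not on the specific values — the rest is a direct transcription of the classical argument, and I would keep the write-up short by citing \cite{Milnor,Pitcher} for the purely algebraic subadditivity step.
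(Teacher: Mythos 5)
Your proposal is correct and follows essentially the route the paper intends: the paper gives no written proof beyond invoking the classical Pitcher--Milnor argument together with Theorems~\ref{thm:collapse_thm_homology} and~\ref{thm:homot_poset_crit_points}, and your reconstruction fills in exactly those steps, correctly locating where homological admissibility (the collapsing step) and homology-regularity (identifying $H_*(X_{a_j},X_{a_{j-1}})\cong \widetilde H_{*-1}(\widehat U_{x_j})$ as that of a $p_j$-cell attachment via excision as in Proposition~\ref{prop:trick3}) enter. Since the corollary already assumes $f$ order preserving and the cited theorems do not require global injectivity, your worries about stability of the hypotheses under normalization are largely moot, and the remaining subadditivity bookkeeping is the standard one you cite.
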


\begin{corollary}[Weak Morse inequalities]
	Let $X$ be a down-wide poset and let $f\colon X\to \R$ be an order preserving Morse function  satisfying the Exclusion condition. Suppose that the Morse matching associated to $f$ is homologically admissible and homology-regular. Then:
	\begin{enumerate}
		\item $m_i\geq b_i$ for every $i$.
		\item The Euler-Poincar\'e Characteristic satisfies $$\chi(X)=\sum_{i=0}^{\deg(X)}(-1)^i b_i=\sum_{i=0}^{\deg(X)}(-1)^i m_i.$$
	\end{enumerate}
\end{corollary}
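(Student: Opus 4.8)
The plan is to follow the standard Morse-theoretic argument, now powered by the structural theorems established earlier in the paper. The key inputs are Theorem \ref{thm:collapse_thm_homology} (in the absence of critical values, inclusions $X_a \hookrightarrow X_b$ induce homology isomorphisms, using that the matching is homologically admissible) and Theorem \ref{thm:homot_poset_crit_points} (when passing a critical value $f(x)$ with $x^{(p)}$ critical, $X_b = X_a \cup_{\partial x^{(p)}} x^{(p)}$). First I would arrange the critical values $c_1 < c_2 < \cdots < c_k$, choose regular thresholds $a_0 < c_1 < a_1 < c_2 < \cdots < c_k < a_k$ so that $X_{a_0} = \emptyset$ and $X_{a_k} = X$, and set $X^{(j)} := X_{a_j}$; since $f$ is order preserving and satisfies the Exclusion condition, and we may assume $f$ injective (Lemma \ref{lema:injectivity_for_granted}) so that each interval $(a_{j-1}, a_j]$ contains exactly one critical value and no other critical values, Theorem \ref{thm:homot_poset_crit_points} together with Theorem \ref{thm:collapse_thm_homology} gives that $X^{(j)}$ is obtained from $X^{(j-1)}$ up to homology by attaching a single $p_j$-cell, where $p_j$ is the height of the corresponding critical point.

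The homology-regularity hypothesis is what makes ``attaching $x^{(p)}$ along $\partial x^{(p)}$'' behave homologically like attaching a genuine $p$-cell: I would argue that since $\widehat{U}_x$ has the homology of $\mathbb{S}^{p-1}$, the long exact sequence of the pair $(X^{(j)}, X^{(j-1)})$ combined with excision (as in Proposition \ref{prop:trick3}, but now at a critical value) yields $H_n(X^{(j)}, X^{(j-1)})$ isomorphic to $H_n(U_x, \widehat{U}_x)$, which by contractibility of $U_x$ is $\widetilde{H}_{n-1}(\widehat{U}_x)$, i.e. the coefficient ring in degree $n = p$ and $0$ otherwise. Hence the pair $(X, \emptyset)$ has a filtration in which the $j$-th quotient contributes exactly one generator in degree $p_j$; equivalently $\mathrm{rank}\, H_n(X^{(j)}, X^{(j-1)}) = m_n$ summed appropriately, with $m_n$ the number of critical points of height $n$.

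With this filtration in hand, the strong Morse inequalities follow from the purely algebraic lemma (the ``subadditivity of the defect of exactness'', see \cite{Milnor, Pitcher}) applied to the chain of subspaces $\emptyset = X^{(0)} \subseteq X^{(1)} \subseteq \cdots \subseteq X^{(k)} = X$: if $S_n(Y,Z) := \mathrm{rank}\, H_n(Y,Z)$ is subadditive (i.e. $S_n(X^{(k)}, X^{(0)}) \le \sum_j S_n(X^{(j)}, X^{(j-1)})$) and similarly the alternating sums $\sum_{i=0}^n (-1)^{n-i} S_i$ satisfy $\sum_{i}(-1)^{n-i} S_i(X^{(k)},X^{(0)}) \le \sum_j \sum_i (-1)^{n-i} S_i(X^{(j)},X^{(j-1)})$, one gets $\sum_{i=0}^n (-1)^{n-i} m_i \ge \sum_{i=0}^n (-1)^{n-i} b_i$, since $H_i(X^{(k)}, X^{(0)}) = H_i(X)$ as $X^{(0)} = \emptyset$. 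The weak inequalities $m_i \ge b_i$ are the special case obtained from two consecutive strong inequalities (adding the $n$ and $n-1$ versions), and the Euler characteristic identity follows by taking $n \ge \deg(X)$, where the strong inequality becomes an equality because the filtration quotients compute $\chi(X)$ as the alternating sum of the $m_i$ and also as the alternating sum of the $b_i$ (the latter being the standard fact that Euler characteristic can be computed from any finite ``cellular-type'' filtration, or directly from $\chi(X) = \sum (-1)^i \mathrm{rank}\, H_i(X)$ over a PID).

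I expect the main obstacle to be bookkeeping rather than conceptual: one must be careful that Theorem \ref{thm:collapse_thm_homology} (which is stated for intervals with no critical values) and Theorem \ref{thm:homot_poset_crit_points} (which handles a single critical value with nothing else in the interval) can be chained together after the injectivity reduction of Lemma \ref{lema:injectivity_for_granted}, and that the homology-regularity hypothesis is invoked at exactly the right point to identify $H_*(X^{(j)}, X^{(j-1)})$ with the reduced homology of a homology $(p_j-1)$-sphere. One also needs to confirm that coefficients in an arbitrary PID cause no trouble: the excision and long-exact-sequence arguments are valid over any coefficient ring, ``rank'' should be read as the rank of a finitely generated module over the PID, and the subadditivity lemma holds in this generality; the one genuine subtlety is that $\widehat{U}_x$ having ``the homology of $\mathbb{S}^{p-1}$'' should be understood with the appropriate coefficients, which is part of the homology-regular hypothesis. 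Once these points are pinned down, the argument is the classical one transported verbatim into the finite-space setting.
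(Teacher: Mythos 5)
Your proposal is correct and follows exactly the route the paper intends: the paper gives no separate proof but invokes the classical Milnor--Pitcher argument, i.e.\ the sublevel filtration, Theorem \ref{thm:collapse_thm_homology} between critical values, Theorem \ref{thm:homot_poset_crit_points} plus excision and homology-regularity to identify each relative term with a single $p$-cell contribution, and the subadditivity lemma, from which the weak inequalities and the Euler characteristic identity follow as you describe.
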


\begin{remark}
	The Morse inequalities for homologically admissible posets can also be derived by following a combinatorial Hodge-theoretic argument mimicking \cite{Forman_Witten} since the arguments provided by Forman can be reproduced without changes in this context.
\end{remark}



Moreover, we also recover Pitcher strengthening of Morse inequalities. We denote by $\mu_i$ the minimum number of generators of the torsion subgroup $T_i$ of $H_i(X)$.

\begin{corollary}
	Let $X$ be a down-wide poset and let $f\colon X\to \R$ be an order preserving Morse function satisfying the Exclusion condition whose associate Morse matching is homologically admissible and homology-regular. Then it holds that: \begin{enumerate}
		\item For every $n\geq 0$:
		$$m_n\geq b_n+\mu_n+\mu_{n-1}.$$
		\item For every $n\geq 0$:
		$$m_n-m_{n-1}+\cdots +(-1)^n m_0\geq b_n-b_{n-1}+\cdots +(-1)^n b_0 + \mu_n.$$
	\end{enumerate}
\end{corollary}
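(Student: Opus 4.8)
The plan is to imitate the classical topological derivation of the Pitcher inequalities, using the Fundamental Theorems proved in Section~\ref{sec:fundamental_theorems_and_consequences} to reduce the computation of the homology of $X$ to an algebraic count of critical cells, exactly as in the smooth and discrete settings \cite{Pitcher,Milnor,Forman2}. First I would fix a principal ideal domain of coefficients and list the critical values $c_1<\dots<c_k$ of $f$; choosing real numbers $a_0<c_1<a_1<c_2<\dots<c_k<a_k$, I obtain a filtration $\emptyset=X_{a_0}\subseteq X_{a_1}\subseteq\dots\subseteq X_{a_k}=X$. By Theorem~\ref{thm:collapse_thm_homology} each inclusion $X_{a_{j-1}}\hookrightarrow X_{a_j}$ with no critical value in $(a_{j-1},c_j)$ — after a preliminary perturbation, Lemma~\ref{lema:injectivity_for_granted}, to separate the regular values — induces an isomorphism on homology, so homologically $X_{a_j}$ differs from $X_{a_{j-1}}$ only by the passage through the single critical value $c_j$. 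By Theorem~\ref{thm:homot_poset_crit_points}, for a critical point $x^{(p)}$ we have $X_{a_j}=X_{a_{j-1}}\cup_{\partial x^{(p)}}x^{(p)}$; since the matching is homology-regular, $\widehat{U}_x$ has the homology of $\mathbb{S}^{p-1}$, hence the pair $(X_{a_j},X_{a_{j-1}})$ has the relative homology of a $p$-cell: $H_q(X_{a_j},X_{a_{j-1}})\cong R$ for $q=p$ and $0$ otherwise.

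Next I would package this into the standard subadditivity machinery. Define the $q$-th \emph{Morse number} $\mathcal{M}_q=\sum_{c_j}\operatorname{rank} H_q(X_{a_j},X_{a_{j-1}})$; by the previous paragraph $\mathcal{M}_q=m_q$, the number of critical points of height $q$. The key algebraic input is that the function $S_q(A,B)=\operatorname{rank} H_q(A,B)+\mu_q(A,B)+\mu_{q-1}(A,B)$, where $\mu_i$ counts the minimal number of generators of the torsion of $H_i$, is subadditive over triples, and that the alternating sum $\sum_{i\le q}(-1)^{q-i}(\operatorname{rank} H_i + \mu_i)$ has the corresponding monotonicity; these are purely homological-algebra lemmas about the long exact sequence of a pair over a PID, identical to those used by Pitcher and reproduced in \cite{Milnor}. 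Summing over the filtration and using $H_q(X_{a_0})=0$, $H_q(X_{a_k})=H_q(X)$, one obtains $m_n\ge b_n+\mu_n+\mu_{n-1}$ and the alternating inequality $m_n-m_{n-1}+\dots+(-1)^nm_0\ge b_n-b_{n-1}+\dots+(-1)^nb_0+\mu_n$, which are precisely statements (1) and (2). For the order-preserving hypothesis: this is what lets us invoke Theorem~\ref{thm:homot_poset_crit_points}, and combined with the Exclusion condition it also guarantees (via Propositions~\ref{prop:path_components_critical_values} and~\ref{prop:regular_elements_do_not_create_path_components}) that regular elements genuinely contribute nothing, so the filtration behaves as claimed.

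The main obstacle I anticipate is verifying carefully that $H_*(X_{a_j},X_{a_{j-1}})$ really is that of a single $p$-cell when $x^{(p)}$ is critical but the matching is only \emph{homology}-regular rather than homotopy-regular: one must run the excision argument of Proposition~\ref{prop:trick3} (excising $X_{a_{j-1}}$, reducing to the pair $(U_x,\widehat{U}_x)$, using contractibility of $U_x$ and the long exact sequence) to get $H_q(X_{a_j},X_{a_{j-1}})\cong \widetilde H_{q-1}(\widehat U_x)\cong \widetilde H_{q-1}(\mathbb{S}^{p-1})$, and one must make sure this holds over an arbitrary PID — which it does, since "having the homology of $\mathbb{S}^{p-1}$" in the definition of homology-regular is taken with $\Z$-coefficients and hence, by the universal coefficient theorem, over any PID the reduced homology is still concentrated in degree $p-1$ and free of rank one. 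Once that identification is secure, the rest is the standard Pitcher bookkeeping and requires no new ideas beyond the Fundamental Theorems already established; I would simply cite \cite{Pitcher,Milnor} for the algebraic subadditivity lemmas rather than reproving them.
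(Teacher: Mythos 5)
Your proposal is correct and follows essentially the same route the paper intends: filter $X$ by the critical values, use Theorem~\ref{thm:collapse_thm_homology} (with the perturbation Lemma~\ref{lema:injectivity_for_granted}) across regular intervals, use Theorem~\ref{thm:homot_poset_crit_points} plus the excision argument of Proposition~\ref{prop:trick3} and homology-regularity to see that each critical point of height $p$ contributes the relative homology of a single $p$-cell, and then invoke the standard Pitcher subadditivity bookkeeping from \cite{Pitcher,Milnor}. This matches the paper, which proves the corollary precisely by reproducing that classical argument from its structural theorems.
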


Observe that if $\deg(X)=n$, then $\mu_n=0$ since $H_n(X)$ is a subgroup of the free abelian group $C_n(X)$. Moreover, $\mu_0=0$ and $\mu_{-1}$ is defined as $0$.

\subsection{Cancelling critical points}

Both the Morse and Morse-Pitcher inequalities suggest the study of Morse functions with few critical points, the so-called {\em optimal} Morse functions. In order to obtain such functions we present an approach consisting in canceling pairs of critical elements, extending to the context of posets known results on smooth manifolds and simplicial complexes.  First, we introduce some terminology. Given a matching $\mathcal{M}$ on the poset $X$, we will decompose $X$ as the disjoint union of three subsets $X=\crit(\mathcal{M})\sqcup s(\mathcal{M})\sqcup t(\mathcal{M})$. For each edge $(x,y)\in \mathcal{M}$, we say that $x$ is the {\em source} of the edge and $y$ is the {\em target}.  We define the {\em source of the matching} $s(\mathcal{M})$ as set whose elements are the sources of the edges in the matching. Analogously, we define the {\em target of the matching} $t(\mathcal{M})$ as set whose elements are the targets of the edges in the matching.  For convenience, we define the {\em source} and {\em target maps} (only defined for elements in the matching $\mathcal{M}$) as follows: given $(x,y)\in \mathcal{M}$, $s(y)=x$ and $t(x)=y$.

\begin{definition}
	Let $\mathcal{M}$ be a matching on the poset $X$. A $\mathcal{M}$-path of index $p$ from $x^{(p)}$ to $\tilde{x}^{(p)}$ is a sequence:
	$$\gamma \co x=x_0^{(p)}\prec y_0^{(p+1)} \succ x_1^{(p)}\prec y_1^{(p+1)} \succ \cdots \prec y_{r-1}^{(p+1)} \succ x_r^{(p)}=\tilde{x}$$
	such that for each $i\in \{0,\ldots, r-1\}$:
	\begin{enumerate}
		\item $(x_i,y_i) \in \mathcal{M}$,
		\item $x_i\neq x_{i+1}$.
	\end{enumerate}
\end{definition}

We present a result, which can be seen as the adaptation of \cite[Theorem 11.1]{Forman2} to our context.

\begin{theorem}[Canceling critical points] \label{thm:cancelling_critical_points} 
	Given a matching $\mathcal{M}$ on a finite graded poset $X$, assume that $z^{(p+1)}$ and $x^{(p)}$ are critical points such that there is an element $y^{(p)}\prec z^{(p+1)}$ and an unique $\mathcal{M}$-path
	$$\gamma \co z\succ y=x_0\prec z_0 \succ x_1 \prec z_1 \succ \cdots \prec z_r\succ x_r=x$$
	(there is no other $\mathcal{M}$-path from any $p$-face of $z^{(p+1)}$ to $x^{(p)}$). Then there is a matching $\mathcal{M}'$ such that:
	\begin{itemize}
		\item The set of critical points of $\mathcal{M}'$ is $$\crit(\mathcal{M}')=\crit(\mathcal{M})-\{x,z\}.$$
		\item Moreover, $\mathcal{M}'=\mathcal{M}$ except along the unique gradient path from $\partial z$ to $x$.
	\end{itemize}
	
\end{theorem}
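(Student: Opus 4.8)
The plan is to mimic Forman's reversal-of-the-gradient-path argument \cite[Theorem 11.1]{Forman2}, adapted to the language of matchings on a graded poset. The basic idea is that the unique $\mathcal{M}$-path $\gamma$ from $y$ (a facet of $z$) to $x$ provides a ``corridor'' of edges that can all be flipped, after which $z$ becomes matched with $y$ and every intermediate pair is re-matched, leaving $x$ and $z$ unmatched. Concretely, I would define
\[
\mathcal{M}' = \bigl(\mathcal{M} \setminus \{(x_0,z_0),(x_1,z_1),\dots,(x_r,z_r)\}\bigr) \cup \{(x_0,z),(x_1,z_0),(x_2,z_1),\dots,(x_r,z_{r-1})\}.
\]
That is, each source $x_i$ that was matched ``upward'' to $z_i$ in $\gamma$ is now matched upward to $z_{i-1}$ (with the convention $z_{-1}=z$), and $x_r=x$ is matched to $z_{r-1}$, while $z$ is matched to $x_0=y$. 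The element $x=x_r$ loses its critical status and $z$ loses its critical status, but no new critical points are created: every element of $X$ that was incident to an edge of $\mathcal{M}$ is still incident to exactly one edge of $\mathcal{M}'$, and the only two elements whose incidence changes from ``matched'' to anything are $x$ and $z$, which become matched. This immediately gives $\crit(\mathcal{M}')=\crit(\mathcal{M})\setminus\{x,z\}$ once we check $\mathcal{M}'$ is a legitimate Morse matching.

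The first two things to verify are cheap: $\mathcal{M}'\subseteq X\times X$ still satisfies $x\prec y$ for each pair (each new pair $(x_i,z_{i-1})$ comes from the chain $x_i\prec z_{i-1}$ which appears in $\gamma$ when $i\ge 1$, and $(x_0,z)=(y,z)$ comes from $y\prec z$), and each element lies in at most one pair (the $x_i$ are pairwise distinct by condition (2) in the definition of an $\mathcal{M}$-path, the $z_j$ are reused but shifted, and $z\notin\{z_0,\dots,z_{r-1}\}$ since $z$ has strictly larger height than the $z_j$ would if... — more carefully, $z^{(p+1)}$ and $z_j^{(p+1)}$ all have height $p+1$, so one must instead argue $z\neq z_j$ for all $j$ using that the path $\gamma$ is the unique $\mathcal{M}$-path and $z$ is critical, hence unmatched in $\mathcal{M}$, hence cannot equal a matched vertex $z_j$; this is where uniqueness of $\gamma$ enters).

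The substantive step is acyclicity of $\mathcal{H}_{\mathcal{M}'}(X)$. Here I would argue by contradiction: a directed cycle in $\mathcal{H}_{\mathcal{M}'}(X)$ that does not meet the modified region would already be a cycle in $\mathcal{H}_{\mathcal{M}}(X)$, contradicting that $\mathcal{M}$ is Morse; so any hypothetical cycle must traverse some of the new edges, i.e. it must enter the corridor. Tracing how such a cycle can move through the flipped edges, one shows it forces the existence of a \emph{second} $\mathcal{M}$-path of index $p$ from a $p$-face of $z$ to $x$ (or, in the other direction, a closed $\mathcal{M}$-path, i.e. a ``$V$-path loop'', which is likewise forbidden by acyclicity of $\mathcal{H}_{\mathcal{M}}(X)$) — contradicting the uniqueness hypothesis. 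This bookkeeping is exactly the poset translation of Forman's argument in \cite[Theorem 11.1]{Forman2}, and since the Hasse diagram of a graded poset behaves combinatorially just like the face incidence graph of a CW-complex, the proof carries over essentially verbatim. Finally, the statement ``$\mathcal{M}'=\mathcal{M}$ except along the unique gradient path from $\partial z$ to $x$'' is immediate from the explicit description of $\mathcal{M}'$ above, since the only pairs altered are those among $(x_0,z_0),\dots,(x_r,z_r)$, all of which lie on $\gamma$.

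I expect the main obstacle to be the acyclicity verification: one has to set up careful notation for how a directed path in $\mathcal{H}_{\mathcal{M}'}(X)$ interacts with the corridor (which new edges it can use, in what order, and how it can exit), and then extract from a putative cycle either a repeated $\mathcal{M}$-path or a forbidden loop. Everything else — that the pairs are valid, that incidences are preserved, that the critical set shrinks by exactly $\{x,z\}$ — is routine once the construction of $\mathcal{M}'$ is written down explicitly. It may also be worth isolating, as a preliminary observation, that since $X$ is graded and $z,x$ are critical they are unmatched, so the corridor is ``disjoint at its ends'' from the rest of the matching, which is what makes the flip well-defined.
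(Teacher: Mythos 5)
Your construction of $\mathcal{M}'$ (reverse the matched pairs along $\gamma$, matching $y$ with $z$ and each $x_i$ with $z_{i-1}$) and your acyclicity argument (a hypothetical cycle in $\mathcal{H}_{\mathcal{M}'}(X)$ must use the flipped corridor, and tracing it yields either a closed $\mathcal{M}$-path or a second $\mathcal{M}$-path from a face of $z$ to $x$, contradicting acyclicity or uniqueness) are exactly the paper's proof, which likewise follows Forman's Theorem 11.1. The proposal is correct and takes essentially the same approach, though the paper spells out the case analysis ($i\neq 0$ versus $i=0$) that you leave as bookkeeping.
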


\begin{proof}
	We define $\mathcal{M}'$ as follows:
	\begin{enumerate}
		\item $t_{\mathcal{M}'}(w)=t_{\mathcal{M}}(w)$ if  $w \notin \{y,z_0,x_1,z_1,\ldots z_r,x\}$ ($\mathcal{M}'=\mathcal{M}$ except along the unique gradient path from $\partial z$ to $x$)
		\item $t_{\mathcal{M}'}(x_i)=z_{i-1}$, $i=1,\ldots,r$ (we reverse the gradient path from $x$ to $z_0$ so $x$ is no longer critical)
		\item $t_{\mathcal{M}'}(y)=z$ (we reverse the arrow from $y$ to $z$ so $z$ is no longer critical).
	\end{enumerate}
	It remains to check that there are no closed $\mathcal{M}'$-paths. We argue by contradiction. Suppose there was a closed $\mathcal{M}'$-path $\delta$.

	\begin{claim*}
		Under the above hypothesis, $\delta$ would contain at least one $p$-element from $\gamma$ and one $p$-element not in $\gamma$.
	\end{claim*}
	\begin{proof}[Proof of the Claim]
		The elements coming from $\gamma$ can not give a closed $\mathcal{M}'$-path on their own since we have just reverted their arrows. The elements of $X$ which are not in $\gamma$ can not give a closed $\mathcal{M}'$-path since in that case we would also have a closed $\mathcal{M}$-path and $\mathcal{M}$ is a gradient vector field. Therefore in $\delta$ we must have at least one element in each of their sets. Moreover, if we have a $(p+1)$ element, then we have a $p$-element, so we have at least one $p$-element of each of one of these sets.
	\end{proof}

	Hence, $\delta$ would contain a sequence of the form:
	$$x_i\prec w_0\succ s_1\prec w_1 \succ \cdots \prec w_s\succ x_j$$
	where $s\geq 0$, $w_i\neq x_k$, $w_i\neq z_k$, $s_i\neq x_k$, $s_i\neq z_k$, for all $i$ and $k$.
	Since $t_{\mathcal{M}'}(w_i)=t_{\mathcal{M}}(w_i)$ and $t_{\mathcal{M}'}(s_i)=t_{\mathcal{M}}(s_i)$ for all $i$, we have a $\mathcal{M}$-path:
	$$w_0 \succ s_1 \prec w_1 \succ \cdots \prec w_s \succ x_j.$$
	Let us consider two cases:
	\begin{enumerate}
		\item If $i\neq 0$, then $s_1\neq x_{i-1},x_i$ and $s_1\prec t_{\mathcal{M}'}(x_i)=z_{i-1}$. Therefore, we can define a second gradient $\mathcal{M}$-path $\gamma'\neq \gamma$ from $\partial z$ to $x$:
		\begin{align*}
			\gamma'\co y=&x_0\prec z_0 \succ x_1 \prec \cdots \succ x_{i-1} \prec z_{i-1} \succ s_1\prec w_1 \succ \cdots\\
			&\succ x_j \prec z_j\succ \cdots \succ x_r=x.
		\end{align*}	
		
		Which is a contradiction.
		\item If $i=0$, then $y=x_0\neq s_1 \prec t_{\mathcal{M}'}(y)=z$. Therefore, we can define the following $\mathcal{M}$-path:
		$$\gamma' \co z \succ s_1 \prec w_1 \succ \cdots \prec w_s \succ x_j \prec z_j \succ \cdots \succ x_r=x$$
		which is different from $\gamma$ and also goes from $\partial z$ to $x$. Then we have a contradiction. \qedhere
	\end{enumerate}
\end{proof}

Finally, there is a kind of dual result to Theorem \ref{thm:cancelling_critical_points} which allows us to create critical points. Both the statement and the proof are a straightforward translation of \cite[Theorem 11.3]{Forman2}.

\bibliographystyle{abbrv}
\bibliography{biblio}

\begin{thebibliography}{10}

\bibitem{Banchoff}
T.~F. {Banchoff}.
\newblock {Critical points and curvature for embedded polyhedra.}
\newblock {\em {J. Differ. Geom.}}, 1:245--256, 1967.

\bibitem{Banchoff2}
T.~F. {Banchoff}.
\newblock {Critical points and curvature for embedded polyhedra II.}
\newblock {Differential geometry, Proc. Spec. Year, Maryland 1981-82, Prog.
  Math. 32, 32-55}, 1983.

\bibitem{Barmak_Minian}
J.~{Barmak} and E.~{Minian}.
\newblock {One-point reductions of finite spaces, \(h\)-regular CW-complexes
  and collapsibility.}
\newblock {\em {Algebr. Geom. Topol.}}, 8(3):1763--1780, 2008.

\bibitem{Barmak_book}
J.~A. Barmak.
\newblock {\em {Algebraic topology of finite topological spaces and
  applications}}.
\newblock Lecture Notes in Mathematics. Springer, Heidelberg, 2011.

\bibitem{BARMAK_posets}
J.~A. Barmak and E.~G. Minian.
\newblock Automorphism groups of finite posets.
\newblock {\em Discrete Math.}, 309(10):3424--3426, 2009.

\bibitem{BarMin12}
J.~A. Barmak and E.~G. Minian.
\newblock Strong homotopy types, nerves and collapses.
\newblock {\em Discrete Comput. Geom.}, 47(2):301--328, 2012.

\bibitem{Bestvina}
M.~{Bestvina} and N.~{Brady}.
\newblock {Morse theory and finiteness properties of groups.}
\newblock {\em {Invent. Math.}}, 129(3):445--470, 1997.

\bibitem{Bjorner}
A.~Bj{\"o}rner and F.~H. Lutz.
\newblock {Simplicial manifolds, bistellar flips and a 16-vertex triangulation
  of the Poincar{\'e} homology 3-sphere}.
\newblock {\em Exp. Math.}, 9(2):275--289, 2000.

\bibitem{Bloch}
E.~D. {Bloch}.
\newblock {Polyhedral representation of discrete Morse functions.}
\newblock {\em {Discrete Math.}}, 313(12):1342--1348, 2013.

\bibitem{Chari}
M.~K. {Chari}.
\newblock {On discrete Morse functions and combinatorial decompositions.}
\newblock {\em {Discrete Math.}}, 217(1-3):101--113, 2000.

\bibitem{Farmer}
F.~D. {Farmer}.
\newblock {Cellular homology for posets.}
\newblock {\em {Math. Japon.}}, 23(6):607--613, 1978/79.

\bibitem{Forman2}
R.~Forman.
\newblock Morse theory for cell complexes.
\newblock {\em Adv. Math.}, 134(1):90--145, 1998.

\bibitem{Forman_Witten}
R.~Forman.
\newblock {Witten-Morse theory for cell complexes.}
\newblock {\em {Topology}}, 37(5):945--979, 1998.

\bibitem{Forman}
R.~Forman.
\newblock {A user's guide to discrete Morse theory}.
\newblock {\em S{\'e}m. Lothar. Combin}, 48, 2002.
\newblock Art. B48c, 35 pp.

\bibitem{Ghristbook}
R.~Ghrist.
\newblock {\em Elementary Applied Topology}.
\newblock Createspace, 2014.

\bibitem{Hatcher}
A.~Hatcher.
\newblock {\em Algebraic Topology.}
\newblock Cambridge University Press, 2002.

\bibitem{Lundell}
A.~T. {Lundell} and S.~{Weingram}.
\newblock {\em {The topology of CW complexes.}}
\newblock {The University Series in Higher Mathematics. Van Nostrand Reinhold
  Co., New York}, 1969.

\bibitem{McCord}
M.~McCord.
\newblock Singular homology groups and homotopy groups of finite topological
  spaces.
\newblock {\em Duke Math. J.}, 33(3):465--474, 1966.

\bibitem{Milnor}
J.~Milnor.
\newblock {\em Morse theory}.
\newblock Based on lecture notes by M. Spivak and R. Wells. Annals of
  Mathematics Studies, No. 51. Princeton University Press, Princeton, N.J.,
  1963.

\bibitem{Minian}
E.~G. {Minian}.
\newblock {Some remarks on Morse theory for posets, homological Morse theory
  and finite manifolds.}
\newblock {\em {Topology Appl.}}, 159(12):2860--2869, 2012.

\bibitem{Morse}
M.~{Morse}.
\newblock {\em {The calculus of variations in the large}}, volume~18.
\newblock Am. Math. Soc. (AMS), Providence, RI, 1934.

\bibitem{Negami}
S.~Negami and M.~Tsuchya.
\newblock Manifold posets.
\newblock {\em {Sci. Rep. Yokohama Nat. Univ. Sect. I Math. Phys. Chem.}},
  41:23--32, 1994.

\bibitem{Pitcher}
E.~{Pitcher}.
\newblock {Inequalities of critical point theory.}
\newblock {\em {Bull. Am. Math. Soc.}}, 64(1):1--30, 1958.

\bibitem{Stong_groups}
R.~E. Stong.
\newblock Group actions on finite spaces.
\newblock {\em Discrete Math.}, 49(1):95--100, 1984.

\bibitem{Wachs}
M.~L. {Wachs}.
\newblock {Poset topology: tools and applications.}
\newblock In {\em {Geometric combinatorics}}, pages 497--615. IAS/Park City
  Math. Ser., 13, Amer. Math. Soc., Providence, RI, 2007.

\end{thebibliography}

\end{document}